\def\ps@pprintTitle{%
  \let\@oddhead\@empty
  \let\@evenhead\@empty
  \let\@oddfoot\@empty
  \let\@evenfoot\@oddfoot
}
\numberwithin{figure}{section}
\numberwithin{equation}{section} %公式按章节编号
\newtheorem{definition}{Definition} [section]            % 斜体
\newtheorem{theorem}{Theorem}[section]            % 斜体
\newtheorem{lemma}{Lemma} [section]%单独编号,与节有关
\newdefinition{corollary}{Corollary}[section]
\newtheorem{remark}{Remark}
\def\loc{{\mathrm{loc}}}
\def\dchi{\scalebox{1.2}{$\chi$}}
\def\dint{\displaystyle\int}
\DeclareMathOperator*{\bmo}{BMO}
\DeclareMathOperator*{\essinf}{ess\, inf}
\DeclareMathOperator*{\esssup}{ess\, sup}
\DeclareMathOperator*{\almostevery}{a.e.~}
\newcommand{\mathd}{\mathrm{d}}
\begin{document}

\begin{frontmatter}

\title{{\bfseries   Characterization of boundedness of some commutators of fractional maximal functions in terms of $p$-adic vector  spaces }}
%  Characterization of BMO spaces via commutators of fractional maximal operators on the $p$-adic vector  spaces

%% Group authors per affiliation:
\author[mymainaddress]{J. Wu\corref{mycorrespondingauthor}}
%\address{Department of Mathematics, Mudanjiang Normal University, Mudanjiang, 157011, China}
\cortext[mycorrespondingauthor]{Corresponding author}
\ead{jl-wu@163.com}

\author[mymainaddress]{Y. Chang}
%\ead{yunpeng_chang2023@163.com}
%% or include affiliations in footnotes:
%\author[mymainaddress,mysecondaryaddress]{Elsevier Inc}
%\ead[url]{www.elsevier.com}

%\author[mysecondaryaddress]{Wenjiao Zhao\corref{mycorrespondingauthor}}
%\cortext[mycorrespondingauthor]{Corresponding author}
%\ead{wenjiaozhao@163.com}

\address[mymainaddress]{Department of Mathematics, Mudanjiang Normal University, Mudanjiang  157011, China}
%\address[mysecondaryaddress]{School of Mathematics, Physics and Finance, Anhui Polytechnic University, Wuhu 241000, China}

\begin{abstract}

This paper gives some characterizations of the boundedness of the maximal or nonlinear commutator of the $p$-adic fractional maximal operator $ \mathcal{M}_{\alpha}^{p}$  with the symbols belong to the $p$-adic BMO spaces on (variable) Lebesgue spaces and Morrey spaces over $p$-adic field,
by which some new characterizations of  BMO functions are obtained in the $p$-adic field context.
Meanwhile, Some equivalent relations between the $p$-adic BMO norm and the  $p$-adic (variable) Lebesgue or Morrey norm are given.

\end{abstract}

\begin{keyword}
$p$-adic field \sep BMO function \sep  fractional maximal function \sep  variable exponent Lebesgue space \sep Morrey space

\MSC[2020]   42B35  %	Harmonic analysis on Euclidean spaces    Function spaces arising in harmonic analysis
              \sep 11E95  %$p$-adic theory
%             \sep 11K70   %Harmonic analysis and almost periodicity in probabilistic number theory
%                 \sep 11F85 %$p$-adic theory, local fields [See also 14G20, 22E50]
%                  \sep 26A16 %Lipschitz (Holder) classes
                   \sep 26A33  	%Fractional derivatives and integrals
%                  \sep 26D10 %Inequalities involving derivatives and differential and integral operators
%                  \sep 46E30 % 	Spaces of measurable functions
                  \sep 47G10 % Integral operators
%                  30H35 %BMO-spaces
\end{keyword}

\end{frontmatter}

%--------------
%\linenumbers    % 显示行数
%-------------

%==========================
\section{Introduction and main results}
\label{sec:introduction}

The importance of commutators is that they are not only able to produce some characterizations of function spaces \cite{janson1978mean,paluszynski1995characterization}, but also closely related to the regularity of solutions of certain partial differential equations  (PDEs) \cite{chiarenza1993w2,difazio1993interior,ragusa2004cauchy,bramanti1995commutators}. The Coifman-Rochberg-Weiss type commutator $[b, T]$ generated by the classical singular integral operator $T$ and a suitable function $b$ is defined by
\begin{align}  \label{equ:commutator-1}
 [b,T]f      & = bT(f)-T(bf).
\end{align}
A well-known result indicates that   $[b,T]$ is bounded on $L^{s}(\mathbb{R}^{n})$  for $1<s<\infty$ if and only if $b\in \bmo(\mathbb{R}^{n})$ (bounded mean oscillation function space). The sufficiency was provided  by  Coifman et al. \cite{coifman1976factorization} and the necessity was obtained by   Janson \cite{janson1978mean}.

In addition, the $p$-adic analysis has attracted much attention in the past few decades due to  its important applications in mathematical physics, science  and technology,  such as  $p$-adic pseudo-differential equations, $p$-adic harmonic analysis, $p$-adic wavelet theory,  etc (see \cite{albeverio2006harmonic,shelkovich2009padic,khrennikov2010non,torresblanca2023some}).

Denote by $\mathbb{Q}$, $\mathbb{N}$, $\mathbb{Z}$  and $\mathbb{R}$ the sets of rational numbers, positive integers, integers and real numbers, separately.
For  $\gamma\in \mathbb{Z}$ and a prime number $p$,
set $\mathbb{Q}_{p}^{n}$ be a vector space over the $p$-adic field $\mathbb{Q}_{p}$, $B_{\gamma} (x)$ represent a $p$-adic ball with center $x \in \mathbb{Q}_{p}^{n}$ and radius $p^{\gamma}$ (for the notations and notions, see  \cref{sec:preliminary} below).

Let  $0 \le \alpha<n$,   the $p$-adic  fractional maximal function of locally integrable function $f$ defined as
%----------------
\begin{align*}
%-------
  \mathcal{M}_{\alpha}^{p}(f)(x) = \sup_{\gamma\in \mathbb{Z}  \atop x\in \mathbb{Q}_{p}^{n}} \dfrac{1}{|B_{\gamma} (x)|_{h}^{1-\alpha/n}} \dint_{B_{\gamma} (x)} |f(y)| \mathd y,
%----------
\end{align*}
%-----------
where the supremum is taken over all $p$-adic balls $B_{\gamma} (x)\subset \mathbb{Q}_{p}^{n}$ and $|E|_{h}$ represents the Haar measure of a measurable set $E\subset\mathbb{Q}_{p}^{n}$.
When $\alpha=0$, we simply write  $\mathcal{M}^{p}$ instead of $\mathcal{M}_{0}^{p}$, which is the $p$-adic Hardy-Littlewood maximal function is defined by
%----------------
\begin{align*}   %\label{equ:p-adic-max-operator}
%-------
  \mathcal{M}^{p}(f)(x) = \sup_{\gamma\in \mathbb{Z}  \atop x\in \mathbb{Q}_{p}^{n}} \dfrac{1}{|B_{\gamma} (x)|_{h}} \dint_{B_{\gamma} (x)} |f(y)| \mathd y.
%----------
\end{align*}
%------------

The reader can refer to Stein \cite{stein1993harmonic} for the definition on the Euclidean case.

Similar to  \labelcref{equ:commutator-1},    two different kinds of commutator of the fractional maximal function are defined as follows (or see \cite{wu2023characterizationC}).
%--------------
\begin{definition} \label{def.commutator-frac-max}
%-----------------
 Let  $0 \le \alpha<n$ and $b$ be  a locally integrable function on $\mathbb{Q}_{p}^{n}$.
%----------
\begin{enumerate}[label=(\arabic*)]
%------------
\item The maximal commutator of $ \mathcal{M}_{\alpha}^{p}$ with $b$ is given by
%----------
\begin{align*}
%-------
\mathcal{M}_{\alpha,b}^{p} (f)(x) &= \sup_{\gamma\in \mathbb{Z}  \atop x\in \mathbb{Q}_{p}^{n}} \dfrac{1}{|B_{\gamma} (x)|_{h}^{1-\alpha/n}}  \dint_{B_{\gamma} (x)} |b(x)-b(y)| |f(y)| \mathd y,
%----------
\end{align*}
%------------
where the supremum is taken over all $p$-adic balls $B_{\gamma} (x)\subset \mathbb{Q}_{p}^{n}$.
%-----------
 \item  The nonlinear commutators generated by   $\mathcal{M}_{\alpha}^{p}$ and $b$  is defined by
%-------
\begin{align*}
%-------
[b,\mathcal{M}_{\alpha}^{p}] (f)(x) &= b(x) \mathcal{M}_{\alpha}^{p} (f)(x) -\mathcal{M}_{\alpha}^{p}(bf)(x).
%----------
\end{align*}
%-------
\end{enumerate}
%--------------
\end{definition}
%------------

When $\alpha=0$, we simply write $[b,\mathcal{M}^{p}]=[b,\mathcal{M}_{0}^{p}]$ and $\mathcal{M}_{b}^{p}=\mathcal{M}_{0,b}^{p}$.

We say that $[b,\mathcal{M}_{\alpha}^{p}] $  is a   nonlinear commutator due to the fact that it is not even a sublinear operator,
although the commutator $[b,T]$ is a linear one. It is worth pointing out that the nonlinear commutator $[b,\mathcal{M}_{\alpha}^{p}] $ and the maximal commutator $\mathcal{M}_{\alpha,b}^{p}$ essentially differ from each other.
For instance, $\mathcal{M}_{\alpha,b}^{p}$ is positive and sublinear, but $[b,\mathcal{M}_{\alpha}^{p}] $ is neither positive nor sublinear

Let $M$  and $M_{\alpha}$  stand for the classical Hardy–Littlewood maximal function and the   fractional maximal function in $\mathbb{R}^{n}$, respectively.
For the case of $b$ belongs to $\bmo(\mathbb{R}^{n})$, the nonlinear commutator $[b, M]$ and  $[b, M_{\alpha}]$ have been studied by many authors in the Euclidean spaces, such as, Bastero et al. \cite{bastero2000commutators} studied the necessary and sufficient conditions for   the boundedness of $[b,M]$ in $L^{q}(\mathbb{R}^{n})$ for $1<q<\infty$. Zhang and Wu obtained similar results for the fractional maximal function in \cite{zhang2009commutators} and extended the mentioned results to variable exponent Lebesgue spaces in \cite{zhang2014commutatorsfor,zhang2014commutators}.

On the other hand, in 2009, Kim \cite{kim2009carleson} studied $p$-adic BMO spaces. Recently, Chac\'{o}n-Cort\'{e}s and
Rafeiro \cite{chacon2021fractional} established the boundedness of the fractional maximal  in the $p$-adic variable exponent Lebesgue spaces. And He and Li \cite{he2023necessary} showed  the characterization of  $p$-adic BMO spaces in view  of the boundedness of commutators of maximal function $\mathcal{M}^{p}$ in the context of the $p$-adic Lebesgue spaces and Morrey spaces.

Inspired by the above literature,  the purpose of this paper is to   study the boundedness of the $p$-adic fractional maximal commutator $ \mathcal{M}_{\alpha,b}^{p}$ or the nonlinear commutator $[b,\mathcal{M}_{\alpha}^{p}] $ generated by $p$-adic fractional maximal function $ \mathcal{M}_{\alpha}^{p} $  over  $p$-adic  (variable)  Lebesgue spaces or   Morrey spaces, where the symbols $b$ belong to the $p$-adic BMO spaces, by which some new characterizations of the $p$-adic version of BMO spaces are given.

Let $\alpha\ge 0$, for a fixed $p$-adic ball $B_{*}$, the fractional maximal function with relation to $B_{*}$ of a locally integrable function $f$ is given by
%----------------
\begin{align*}
%-------
%  \mathcal{M}_{\alpha,B_{*}}^{p}(f)(x) = \sup_{\gamma\in \mathbb{Z}  \atop B_{\gamma} (x)\subseteq B_{*}} \dfrac{1}{|B_{\gamma} (x)|_{h}^{1-\alpha/n}} \dint_{B_{\gamma} (x)} |f(y)| \mathd y,
  \mathcal{M}_{\alpha,B_{*}}^{p}(f)(x) = \sup_{\gamma\in \mathbb{Z}  \atop B_{\gamma} (x)\subset B_{*}} \dfrac{1}{|B_{\gamma} (x)|_{h}^{1-\alpha/n}} \dint_{B_{\gamma} (x)} |f(y)| \mathd y,
%----------
\end{align*}
%-----------
where the supremum is taken over all the $p$-adic  ball $B_{\gamma} (x)$ with $B_{\gamma} (x)\subset B_{*}$ for a fixed $p$-adic  ball $B_{*}$. When $\alpha= 0$,  we simply write $\mathcal{M}_{B_{*}}^{p}$ instead of $\mathcal{M}_{0,B_{*}}^{p}$.

Our main results can be stated as follows.
%The first part of this paper   gives  some new characterization of the spaces $ \Lambda_{\beta}(\mathbb{Q}_{p}^{n})$ via the boundedness of  $\mathcal{M}_{\alpha,b}^{p}$  and $[b,\mathcal{M}_{\alpha}^{p}] $ in the $p$-adic field context  (see  \cref{sec:preliminary} for the corresponding notions).
 Firstly, we consider the boundedness of  nonlinear commutator $[b,\mathcal{M}_{\alpha}^{p}] $ in the context of $p$-adic  variable exponent Lebesgue spaces when the symbol belongs to a  $p$-adic   $ \bmo(\mathbb{Q}_{p}^{n})$ (see   \cref{sec:preliminary}  below). And  some new characterizations of BMO  via such commutators are given.
%gives a new characterizations of the  Lipschitz spaces of  $p$-adic version in the context of $p$-adic  variable exponent Lebesgue spaces.

%-----------------
\begin{theorem} \label{thm:nonlinear-frac-max-var-bmo}  %[Spanne-type  result]
%---------------
 Let    $0< \alpha<n$  and $b$ be a locally integrable function on $\mathbb{Q}_{p}^{n}$.
 Then the following assertions are equivalent:
%--------------
\begin{enumerate}[label=(T.\arabic*)]   %\arabic
%%--------------
\item   $b\in  \bmo(\mathbb{Q}_{p}^{n})$ and $b^{-}\in L^{\infty}(\mathbb{Q}_{p}^{n})$.
 %-----------
    \label{enumerate:thm-nonlinear-frac-max-var-bmo-1}
%------------------
   \item The commutator $ [b,\mathcal{M}_{\alpha}^{p}] $ is bounded from $L^{r(\cdot)}(\mathbb{Q}_{p}^{n})$ to $L^{q(\cdot)}(\mathbb{Q}_{p}^{n})$ for all $r(\cdot), q(\cdot)\in   \mathscr{C}^{\log}(\mathbb{Q}_{p}^{n}) $ with $r(\cdot)\in \mathscr{P}(\mathbb{Q}_{p}^{n})$, $ r_{+}<\frac{n}{\alpha}$ and $1/q(\cdot) = 1/r(\cdot) -\alpha/n$.
%-----------
    \label{enumerate:thm-nonlinear-frac-max-var-bmo-2}
%--------------------------------
\item  The commutator $ [b,\mathcal{M}_{\alpha}^{p}] $ is bounded from $L^{r(\cdot)}(\mathbb{Q}_{p}^{n})$ to $L^{q(\cdot)}(\mathbb{Q}_{p}^{n})$  for some  $r(\cdot), q(\cdot)\in   \mathscr{C}^{\log}(\mathbb{Q}_{p}^{n}) $ with $r(\cdot)\in \mathscr{P}(\mathbb{Q}_{p}^{n})$, $r_{+}<\frac{n}{\alpha}$ and $1/q(\cdot) = 1/r(\cdot) -\alpha/n$.
%-----------
    \label{enumerate:thm-nonlinear-frac-max-var-bmo-3}
%------------
   \item There exists some  $r(\cdot), q(\cdot)\in   \mathscr{C}^{\log}(\mathbb{Q}_{p}^{n}) $ with $r(\cdot)\in \mathscr{P}(\mathbb{Q}_{p}^{n})$,  $r_{+}<\frac{n}{\alpha}$ and $1/q(\cdot) = 1/r(\cdot) -\alpha/n$, such that
%-----------
\begin{align} \label{inequ:thm-nonlinear-frac-max-var-bmo-4}
%-----------
\sup_{\gamma\in \mathbb{Z} \atop x\in \mathbb{Q}_{p}^{n}}  \dfrac{\Big\| \big(b -|B_{\gamma} (x)|_{h}^{-\alpha/n}\mathcal{M}_{\alpha,B_{\gamma} (x)}^{p} (b) \big) \dchi_{B_{\gamma} (x)} \Big\|_{L^{q(\cdot)}(\mathbb{Q}_{p}^{n}) }}{\|\dchi_{B_{\gamma} (x)}\|_{L^{q(\cdot)}(\mathbb{Q}_{p}^{n}) }}  < \infty.
%-----------------
\end{align}
%-----------
%-----------
 \vspace{-1em}\label{enumerate:thm-nonlinear-frac-max-var-bmo-4}
%------------
   \item  For all   $r(\cdot), q(\cdot)\in   \mathscr{C}^{\log}(\mathbb{Q}_{p}^{n}) $ with $r(\cdot)\in \mathscr{P}(\mathbb{Q}_{p}^{n})$,  $r_{+}<\frac{n}{\alpha}$ and $1/q(\cdot) = 1/r(\cdot) -\alpha/n$, such that \labelcref{inequ:thm-nonlinear-frac-max-var-bmo-4} holds.
%-----------
\label{enumerate:thm-nonlinear-frac-max-var-bmo-5}
%-----------------
\end{enumerate}
%-------------
%----------
\end{theorem}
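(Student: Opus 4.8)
The plan is to run the cyclic chain of implications (T.1) $\Rightarrow$ (T.2) $\Rightarrow$ (T.3) $\Rightarrow$ (T.4) $\Rightarrow$ (T.1), together with the side implications (T.2) $\Rightarrow$ (T.5) $\Rightarrow$ (T.4); since (T.4) $\Rightarrow$ (T.1) $\Rightarrow$ (T.2), all five assertions then become equivalent. The implications (T.2) $\Rightarrow$ (T.3) and (T.5) $\Rightarrow$ (T.4) are trivial, because a statement holding for \emph{all} admissible pairs $(r(\cdot),q(\cdot))$ holds in particular for \emph{some} such pair. Thus the genuine content is carried by three blocks: the sufficiency (T.1) $\Rightarrow$ (T.2), a ``test-function'' passage from boundedness to \labelcref{inequ:thm-nonlinear-frac-max-var-bmo-4} which simultaneously gives (T.3) $\Rightarrow$ (T.4) and (T.2) $\Rightarrow$ (T.5), and the necessity (T.4) $\Rightarrow$ (T.1).

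For the sufficiency I would first record the pointwise estimate
\[
\big|[b,\mathcal{M}_\alpha^p]f(x)\big|\le \mathcal{M}_{\alpha,b}^p(f)(x)+2\,b^{-}(x)\,\mathcal{M}_\alpha^p(f)(x),
\]
which follows by writing $b(x)=|b(x)|-2b^{-}(x)$, so that $b(x)-|b(x)|=-2b^{-}(x)$, together with the elementary bound $\big||b(x)|\mathcal{M}_\alpha^p f(x)-\mathcal{M}_\alpha^p(|b|f)(x)\big|\le\mathcal{M}_{\alpha,b}^p(f)(x)$ obtained by comparing, on a near-optimal $p$-adic ball, the quantities $|b(x)|\tfrac{1}{|B|_h^{1-\alpha/n}}\int_B|f|$ and $\tfrac{1}{|B|_h^{1-\alpha/n}}\int_B|b||f|$. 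Granting $b\in\bmo(\mathbb{Q}_p^n)$ and $b^{-}\in L^\infty(\mathbb{Q}_p^n)$, the first term is handled by the boundedness of the maximal commutator $\mathcal{M}_{\alpha,b}^p$ from $L^{r(\cdot)}$ to $L^{q(\cdot)}$ for $b\in\bmo$ (an auxiliary lemma established earlier), and the second by $\|b^{-}\|_\infty$ times the boundedness of $\mathcal{M}_\alpha^p$ between the same spaces (the Chac\'{o}n-Cort\'{e}s--Rafeiro result \cite{chacon2021fractional}). Since both hold for every admissible pair, this yields (T.2).

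The test-function block rests on one explicit computation. Fixing a $p$-adic ball $B=B_\gamma(x)$ and using that $p$-adic balls are nested or disjoint, one gets for $y\in B$ the identities $\mathcal{M}_\alpha^p(\dchi_{B})(y)=|B|_h^{\alpha/n}$ and $\mathcal{M}_\alpha^p(b\dchi_{B})(y)=\mathcal{M}_{\alpha,B}^p(b)(y)$, whence
\[
[b,\mathcal{M}_\alpha^p](\dchi_{B})(y)=|B|_h^{\alpha/n}\Big(b(y)-|B|_h^{-\alpha/n}\mathcal{M}_{\alpha,B}^p(b)(y)\Big),\qquad y\in B.
\]
Therefore applying the boundedness of $[b,\mathcal{M}_\alpha^p]$ to $\dchi_{B}$ gives $\big\|\big(b-|B|_h^{-\alpha/n}\mathcal{M}_{\alpha,B}^p(b)\big)\dchi_{B}\big\|_{L^{q(\cdot)}}\le C\,|B|_h^{-\alpha/n}\|\dchi_{B}\|_{L^{r(\cdot)}}$, and the norm relation $|B|_h^{-\alpha/n}\|\dchi_{B}\|_{L^{r(\cdot)}}\approx\|\dchi_{B}\|_{L^{q(\cdot)}}$ (valid since $1/q(\cdot)=1/r(\cdot)-\alpha/n$ and the exponents lie in $\mathscr{C}^{\log}(\mathbb{Q}_p^n)$) yields \labelcref{inequ:thm-nonlinear-frac-max-var-bmo-4} uniformly in $B$. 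Running this for the single pair of (T.3) gives (T.4); running it for every admissible pair upgrades (T.2) to (T.5).

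The necessity (T.4) $\Rightarrow$ (T.1) is where the main difficulty lies, since a \emph{single} scalar quantity must encode both membership in $\bmo$ and the boundedness of $b^{-}$. Writing $g_B:=b-|B|_h^{-\alpha/n}\mathcal{M}_{\alpha,B}^p(b)$ and noting $|B|_h^{-\alpha/n}\mathcal{M}_{\alpha,B}^p(b)(y)\ge\frac{1}{|B|_h}\int_B|b|=:(|b|)_B\ge0$, the elementary inequality $b^{-}(y)\le|g_B(y)|$ holds for a.e.\ $y\in B$; combined with \labelcref{inequ:thm-nonlinear-frac-max-var-bmo-4} and the characteristic-function norm estimate this forces $\|b^{-}\dchi_{B}\|_{L^{q(\cdot)}}\le C\|\dchi_{B}\|_{L^{q(\cdot)}}$, hence $b^{-}\in L^\infty$ by a Lebesgue-density argument. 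For the $\bmo$ bound I would pass from \labelcref{inequ:thm-nonlinear-frac-max-var-bmo-4} to the averaged estimate $\frac{1}{|B|_h}\int_B|g_B|\le C$ via the generalized H\"older inequality together with $\|\dchi_{B}\|_{L^{q(\cdot)}}\|\dchi_{B}\|_{L^{q'(\cdot)}}\approx|B|_h$, and then use the triangle inequality through $(|b|)_B$ (exploiting $h_B\ge(|b|)_B$ pointwise) to reach $\frac{1}{|B|_h}\int_B|b-(|b|)_B|\le 2C$ and finally $\frac{1}{|B|_h}\int_B|b-b_B|\le 2C+2\|b^{-}\|_\infty$. The crux is therefore twofold: the sign bookkeeping that isolates $b^{-}$ from $g_B$, and the precise two-sided characteristic-function norm comparisons, which hold exactly because the exponents are log-H\"older continuous.
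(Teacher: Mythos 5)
Your proposal is correct, and two of its three blocks --- the sufficiency \labelcref{enumerate:thm-nonlinear-frac-max-var-bmo-1} $\Rightarrow$ \labelcref{enumerate:thm-nonlinear-frac-max-var-bmo-2} (pointwise domination $|[b,\mathcal{M}_{\alpha}^{p}]f|\le \mathcal{M}_{\alpha,b}^{p}f+2b^{-}\mathcal{M}_{\alpha}^{p}f$ followed by boundedness of the maximal operators) and the test-function passage from boundedness to \labelcref{inequ:thm-nonlinear-frac-max-var-bmo-4} via $[b,\mathcal{M}_{\alpha}^{p}](\dchi_{B})=|B|_{h}^{\alpha/n}\big(b-|B|_{h}^{-\alpha/n}\mathcal{M}_{\alpha,B}^{p}(b)\big)$ on $B$ --- coincide with the paper's argument (\cref{lem:nonlinear-frac-max-pointwise-estimate}, \cref{lem:frac-max-pointwise-estimate}, \cref{lem:frac-max-pointwise-property}, \cref{lem:norm-characteristic-p-adic}). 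Where you genuinely diverge is the necessity \labelcref{enumerate:thm-nonlinear-frac-max-var-bmo-4} $\Rightarrow$ \labelcref{enumerate:thm-nonlinear-frac-max-var-bmo-1}. The paper reduces to the He--Li characterization (\cref{lem:thm1.3-max-bmo-he2023necessary}): it splits $\frac{1}{|B|_{h}}\int_{B}|b-\mathcal{M}_{B}^{p}(b)|$ into two terms, handles the first by H\"older as you do, but for the second it must \emph{first} establish $b\in\bmo(\mathbb{Q}_{p}^{n})$ through the separate \cref{lem:frac-max-bmo-norm} (the $E$/$F$ sign-splitting trick around $b_{B}$), and then re-apply the sufficiency machinery (the pointwise commutator estimates with symbol $|b|$) to the test functions $\dchi_{B}$. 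You instead extract both conclusions directly from the single quantity $g_{B}=b-|B|_{h}^{-\alpha/n}\mathcal{M}_{\alpha,B}^{p}(b)$: the observation $|B|_{h}^{-\alpha/n}\mathcal{M}_{\alpha,B}^{p}(b)\ge\frac{1}{|B|_{h}}\int_{B}|b|\ge 0$ (because $B$ itself is a competitor ball, as $B_{\gamma}(y)=B_{\gamma}(x)$ for $y\in B$) gives the pointwise bound $b^{-}\le|g_{B}|$ on $B$, hence $b^{-}\in L^{\infty}$ after averaging by H\"older, \cref{lem:norm-characteristic-p-adic}, and the $p$-adic Lebesgue differentiation theorem (\cref{lem:cor2.11-kim2009carleson}); and the same positivity, via $\int_{B}\big(|B|_{h}^{-\alpha/n}\mathcal{M}_{\alpha,B}^{p}(b)-(|b|)_{B}\big)\le\int_{B}|g_{B}|$, yields $\frac{1}{|B|_{h}}\int_{B}|b-(|b|)_{B}|\le 2C$ and thus $b\in\bmo(\mathbb{Q}_{p}^{n})$. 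Your route is more elementary and self-contained: it needs neither \cref{lem:thm1.3-max-bmo-he2023necessary} nor \cref{lem:frac-max-bmo-norm}, nor any boundedness input in the reverse direction; what the paper's route buys is reuse of lemmas that serve its other theorems and the explicit link to the $L^{q}$-average characterization \labelcref{inequ:1.4-he2023necessary}. Two small points you should make explicit in a final write-up: the pointwise commutator estimate only makes sense where $\mathcal{M}_{\alpha}^{p}(f)(x)<\infty$, which for $f\in L^{r(\cdot)}$ with $r_{+}<n/\alpha$ is guaranteed a.e.\ by \cref{lem:frac-max-almost-every}; and the ``Lebesgue-density argument'' for $b^{-}\in L^{\infty}$ is cleanest as stated above (uniform bound on ball averages of $b^{-}$ plus \cref{lem:cor2.11-kim2009carleson}), rather than a direct level-set density argument in the variable-exponent norm.
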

%-----------------

For the case of $r(\cdot)$ and $q(\cdot)$ being constants, we have the following result from \cref{thm:nonlinear-frac-max-var-bmo}, which is new even for this case.
%When  $r(\cdot)$ and $q(\cdot)$ are constants, we get the following result from \cref{thm:nonlinear-frac-max-var-bmo}.

%-----------------
\begin{corollary}  \label{cor:nonlinear-frac-max-bmo}
%---------------
 Let $0< \alpha< n$ and $b$ be a locally integrable function  on $\mathbb{Q}_{p}^{n}$. Then the following statements are equivalent:
%--------------
\begin{enumerate}[label=(C.\arabic*)]
%%--------------
\item   $b\in  \bmo(\mathbb{Q}_{p}^{n})$  and $b^{-}\in L^{\infty}(\mathbb{Q}_{p}^{n})$.
 %-----------
    \label{enumerate:cor-nonlinear-frac-max-bmo-1}
%------------------
   \item The commutator $[b,\mathcal{M}_{\alpha}^{p}]$ is bounded from $L^{r}(\mathbb{Q}_{p}^{n})$ to $L^{q}(\mathbb{Q}_{p}^{n})$ for all $r, q$ with $1<r<\frac{n}{\alpha}$ and $1/q = 1/r -\alpha/n$.
%-----------
    \label{enumerate:cor-nonlinear-frac-max-bmo-2}
%------------
   \item  The commutator $[b,\mathcal{M}_{\alpha}^{p}]$ is bounded from $L^{r}(\mathbb{Q}_{p}^{n})$ to $L^{q}(\mathbb{Q}_{p}^{n})$ for some $r, q$ with $1<r<\frac{n}{\alpha}$ and $1/q = 1/r -\alpha/n$.
%-----------
    \label{enumerate:cor-nonlinear-frac-max-bmo-3}
%------------
   \item  There exists some  $r, q$ with $1<r<\frac{n}{\alpha}$ and $1/q = 1/r -\alpha/n$, such that
%--------
\begin{align} \label{inequ:cor-nonlinear-frac-max-bmo-4}
%-----------
 \sup_{\gamma\in \mathbb{Z} \atop x\in \mathbb{Q}_{p}^{n}}  \dfrac{1}{|B_{\gamma} (x)|_{h}} \dint_{B_{\gamma} (x)} \Big|b(y)-|B_{\gamma} (x)|_{h}^{-\alpha/n} \mathcal{M}_{\alpha,B_{\gamma} (x)}^{p} (b)(y) \Big|^{q} \mathd y  <\infty.
%-----------------
\end{align}
%-----------
 \vspace{-1em}\label{enumerate:cor-nonlinear-frac-max-bmo-4}
%-----------------
   \item  For all   $r, q$ with  $1<r<\frac{n}{\alpha}$ and $1/q = 1/r -\alpha/n$, such that \labelcref{inequ:cor-nonlinear-frac-max-bmo-4} holds.
%--------
\label{enumerate:cor-nonlinear-frac-max-bmo-5}
%---------
\end{enumerate}
%-------------
%----------
\end{corollary}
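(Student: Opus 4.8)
The plan is to obtain \cref{cor:nonlinear-frac-max-bmo} as the constant-exponent specialization of \cref{thm:nonlinear-frac-max-var-bmo}. The starting observation is that every constant function is log-Hölder continuous, so any constant exponent lies in $\mathscr{C}^{\log}(\mathbb{Q}_{p}^{n})$; moreover, a constant $r$ with $1<r<\frac{n}{\alpha}$ satisfies $r\in \mathscr{P}(\mathbb{Q}_{p}^{n})$ (the $p$-adic Hardy--Littlewood maximal operator being bounded on $L^{r}$ for $1<r\le\infty$) and $r_{+}=r<\frac{n}{\alpha}$, while the pointwise relation $1/q=1/r-\alpha/n$ is exactly $1/q(\cdot)=1/r(\cdot)-\alpha/n$ for constant exponents. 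Hence each admissible pair $(r,q)$ in \cref{cor:nonlinear-frac-max-bmo} is an admissible pair $(r(\cdot),q(\cdot))$ in \cref{thm:nonlinear-frac-max-var-bmo}, and for such constant exponents one has $L^{r(\cdot)}(\mathbb{Q}_{p}^{n})=L^{r}(\mathbb{Q}_{p}^{n})$ and $L^{q(\cdot)}(\mathbb{Q}_{p}^{n})=L^{q}(\mathbb{Q}_{p}^{n})$ with identical norms.

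With this in hand, I would match the five conditions directly. Since (C.1) and (T.1) coincide, it suffices to show that each of (C.2)--(C.5) is equivalent to (T.1). The implication (T.1)$\Rightarrow$(C.2) follows from (T.1)$\Rightarrow$(T.2) by restricting the conclusion, which holds for every admissible variable pair, to the subfamily of constant exponents; likewise (T.1)$\Rightarrow$(C.5) follows from (T.1)$\Rightarrow$(T.5). The passages (C.2)$\Rightarrow$(C.3) and (C.5)$\Rightarrow$(C.4) are trivial, since a statement holding for all admissible constant pairs holds in particular for some such pair. Conversely, any witnessing constant pair appearing in (C.3) or (C.4) is simultaneously an admissible variable pair, so it furnishes an instance of the hypothesis of (T.3) or (T.4); invoking (T.3)$\Rightarrow$(T.1) and (T.4)$\Rightarrow$(T.1) then returns (C.1), closing every equivalence.

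The only computation needed is to verify that the normalized $L^{q(\cdot)}$-ratio in \labelcref{inequ:thm-nonlinear-frac-max-var-bmo-4} collapses to the averaged integral \labelcref{inequ:cor-nonlinear-frac-max-bmo-4} when $q$ is constant. Writing $g=b-|B_{\gamma}(x)|_{h}^{-\alpha/n}\mathcal{M}_{\alpha,B_{\gamma}(x)}^{p}(b)$ and using $\|g\dchi_{B_{\gamma}(x)}\|_{L^{q}}=\big(\dint_{B_{\gamma}(x)}|g|^{q}\,\mathd y\big)^{1/q}$ together with $\|\dchi_{B_{\gamma}(x)}\|_{L^{q}}=|B_{\gamma}(x)|_{h}^{1/q}$, one gets
\begin{align*}
\frac{\big\| g\,\dchi_{B_{\gamma}(x)} \big\|_{L^{q}(\mathbb{Q}_{p}^{n})}}{\|\dchi_{B_{\gamma}(x)}\|_{L^{q}(\mathbb{Q}_{p}^{n})}}
= \left( \frac{1}{|B_{\gamma}(x)|_{h}} \dint_{B_{\gamma}(x)} |g(y)|^{q}\,\mathd y \right)^{1/q},
\end{align*}
so finiteness of the supremum in \labelcref{inequ:thm-nonlinear-frac-max-var-bmo-4} is equivalent to finiteness of the supremum in \labelcref{inequ:cor-nonlinear-frac-max-bmo-4}. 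There is no genuine obstacle here: the corollary is essentially bookkeeping on top of \cref{thm:nonlinear-frac-max-var-bmo}, and the single point that deserves a line of justification is the admissibility of constant exponents, namely that a constant $r\in(1,\frac{n}{\alpha})$ really belongs to $\mathscr{P}(\mathbb{Q}_{p}^{n})\cap\mathscr{C}^{\log}(\mathbb{Q}_{p}^{n})$ with $r_{+}<\frac{n}{\alpha}$.
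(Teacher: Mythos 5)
Your proposal is correct and follows exactly the paper's route: the paper derives \cref{cor:nonlinear-frac-max-bmo} as the constant-exponent specialization of \cref{thm:nonlinear-frac-max-var-bmo}, which is precisely your argument. Your write-up additionally spells out the bookkeeping the paper leaves implicit (constant exponents belong to $\mathscr{P}(\mathbb{Q}_{p}^{n})\cap\mathscr{C}^{\log}(\mathbb{Q}_{p}^{n})$, and the normalized $L^{q(\cdot)}$-ratio in \labelcref{inequ:thm-nonlinear-frac-max-var-bmo-4} reduces to the averaged integral in \labelcref{inequ:cor-nonlinear-frac-max-bmo-4} via $\|\dchi_{B_{\gamma}(x)}\|_{L^{q}}=|B_{\gamma}(x)|_{h}^{1/q}$), which is a sound and complete justification.
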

%-----------------

%--------------------
\begin{remark}   \label{rem.cor-nonlinear-frac-max-bmo}
%-----------
\begin{enumerate}[ label=(\roman*)]  %,itemindent=-0.3em,itemindent=1.5em
%-------
\item For the case $\alpha=0$,
    \cref{cor:nonlinear-frac-max-bmo} was proved in \cite[Theorem 1.3]{he2023necessary}.
%-------
\item  Moreover, it is pointed out in \cite[Theorem 1.3]{he2023necessary} (see also \cref{lem:thm1.3-max-bmo-he2023necessary} below) that  $b\in  \bmo(\mathbb{Q}_{p}^{n})$  and $b^{-}\in L^{\infty}(\mathbb{Q}_{p}^{n})$ if and only if
%--------
\begin{align} \label{inequ:1.4-he2023necessary}
%-----------
 \sup_{\gamma\in \mathbb{Z} \atop x\in \mathbb{Q}_{p}^{n}} \dfrac{1}{|B_{\gamma} (x)|_{h}} \dint_{B_{\gamma} (x)} \Big|b(y)-  \mathcal{M}_{B_{\gamma} (x)}^{p} (b)(y) \Big|^{q} \mathd y  <\infty.
%-----------------
\end{align}
%-----------
Compared with \labelcref{inequ:1.4-he2023necessary},  \labelcref{inequ:cor-nonlinear-frac-max-bmo-4} gives a new characterization for $ \bmo(\mathbb{Q}_{p}^{n})$.
%-------
\end{enumerate}
%--------------------------
\end{remark}
%-------------

In particular, when $\alpha=0$, the results are also true  come from \cref{thm:nonlinear-frac-max-var-bmo} and \cref{cor:nonlinear-frac-max-bmo}.
Now we only give the case  in   the context of the $p$-adic version of  variable exponent  Lebesgue spaces, and it is new.

%-----------------
\begin{corollary}  \label{cor:nonlinear-max-var-bmo}
%---------------
 Let   $b$ be a locally integrable function  on $\mathbb{Q}_{p}^{n}$. Then the following statements are equivalent:
%--------------
\begin{enumerate}[label=(C.\arabic*)]
%%--------------
\item  $b\in  \bmo(\mathbb{Q}_{p}^{n})$  and $b^{-}\in L^{\infty}(\mathbb{Q}_{p}^{n})$.
 %-----------
    \label{enumerate:cor-nonlinear-max-var-bmo-1}
%------------------
   \item The commutator $ [b,\mathcal{M}^{p}] $ is bounded on  $L^{q(\cdot)}(\mathbb{Q}_{p}^{n})$ for all $  q(\cdot)\in   \mathscr{P}^{\log}(\mathbb{Q}_{p}^{n})  $.
%-----------
    \label{enumerate:cor-nonlinear-max-var-bmo-2}
%--------------------------------
\item  The commutator $ [b,\mathcal{M}^{p}] $ is bounded on $L^{q(\cdot)}(\mathbb{Q}_{p}^{n})$  for some  $  q(\cdot)\in   \mathscr{P}^{\log}(\mathbb{Q}_{p}^{n})  $.
%-----------
    \label{enumerate:cor-nonlinear-max-var-bmo-3}
%------------
   \item There exists some  $  q(\cdot)\in   \mathscr{P}^{\log}(\mathbb{Q}_{p}^{n})  $, such that
%-----------
\begin{align} \label{inequ:cor-nonlinear-max-var-bmo-4}
%-----------
\sup_{\gamma\in \mathbb{Z} \atop x\in \mathbb{Q}_{p}^{n}}  \dfrac{\Big\| \big(b - \mathcal{M}_{B_{\gamma} (x)}^{p} (b) \big) \dchi_{B_{\gamma} (x)} \Big\|_{L^{q(\cdot)}(\mathbb{Q}_{p}^{n}) }}{\|\dchi_{B_{\gamma} (x)}\|_{L^{q(\cdot)}(\mathbb{Q}_{p}^{n}) }}  < \infty.
%-----------------
\end{align}
%-----------
\vspace{-1em}\label{enumerate:cor-nonlinear-max-var-bmo-4}
%------------
   \item  For all   $  q(\cdot)\in   \mathscr{P}^{\log}(\mathbb{Q}_{p}^{n})  $, such that \labelcref{inequ:cor-nonlinear-max-var-bmo-4} holds.
%-----------
\label{enumerate:cor-nonlinear-max-var-bmo-5}
%---------
\end{enumerate}
%-------------
%----------
\end{corollary}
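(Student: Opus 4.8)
The plan is to run the same cyclic scheme that proves \cref{thm:nonlinear-frac-max-var-bmo}, specialised to the endpoint $\alpha=0$. When $\alpha=0$ one has $q(\cdot)=r(\cdot)$ and every weight $|B_\gamma(x)|_h^{-\alpha/n}$ collapses to $1$, so the restricted fractional maximal function reduces to $\mathcal{M}_{B_\gamma(x)}^p$ and \labelcref{inequ:cor-nonlinear-max-var-bmo-4} is exactly the $\alpha=0$ form of the oscillation functional. I would establish
\[
\text{(C.1)}\Rightarrow\text{(C.2)}\Rightarrow\text{(C.3)}\Rightarrow\text{(C.4)}\Rightarrow\text{(C.1)},
\]
together with $\text{(C.1)}\Rightarrow\text{(C.5)}\Rightarrow\text{(C.4)}$, which closes all five statements. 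The arrows (C.2)$\Rightarrow$(C.3) and (C.5)$\Rightarrow$(C.4) are immediate, since an \emph{``all''} statement forces the corresponding \emph{``some''} statement; the substance lies in the two sufficiency arrows out of (C.1), the test-function arrow (C.3)$\Rightarrow$(C.4), and the necessity arrow (C.4)$\Rightarrow$(C.1).

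The bridge between the commutator and the oscillation functional in \labelcref{inequ:cor-nonlinear-max-var-bmo-4} is an exact pointwise identity on a fixed ball $B=B_\gamma(x)$. Choosing $f=\dchi_B$ and exploiting the ultrametric geometry of $\mathbb{Q}_p^n$ (balls are nested or disjoint and every point is a center), one verifies $\mathcal{M}^p(\dchi_B)(y)=1$ and $\mathcal{M}^p(b\dchi_B)(y)=\mathcal{M}_B^p(b)(y)$ for all $y\in B$, so that
\[
[b,\mathcal{M}^p](\dchi_B)(y)=b(y)-\mathcal{M}_B^p(b)(y),\qquad y\in B.
\]
Assuming (C.3), I would restrict to $B$, take $L^{q(\cdot)}$ norms of this identity, and bound $\|(b-\mathcal{M}_B^p(b))\dchi_B\|_{L^{q(\cdot)}}\le\|[b,\mathcal{M}^p](\dchi_B)\|_{L^{q(\cdot)}}\le C\|\dchi_B\|_{L^{q(\cdot)}}$; dividing by $\|\dchi_B\|_{L^{q(\cdot)}}$ and taking the supremum over balls produces (C.4).

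For the sufficiency arrows I would first record the pointwise control
\[
\bigl|[b,\mathcal{M}^p]f(x)\bigr|\le \mathcal{M}_b^p f(x)+2\|b^-\|_{L^\infty}\,\mathcal{M}^p f(x),
\]
which follows from $\bigl|b(x)-|b(y)|\bigr|\le|b(x)-b(y)|+2b^-(y)$ after estimating $b(x)\mathcal{M}^pf(x)-\mathcal{M}^p(bf)(x)$ against a near-optimal ball. Under (C.1) the maximal commutator $\mathcal{M}_b^p$ is bounded on every $L^{q(\cdot)}(\mathbb{Q}_p^n)$ with $q(\cdot)\in\mathscr{P}^{\log}(\mathbb{Q}_p^n)$ — the $\alpha=0$ instance of the maximal-commutator estimate (with $\bmo$ symbol) underlying \cref{thm:nonlinear-frac-max-var-bmo} — while $\mathcal{M}^p$ is bounded on the same spaces by the $p$-adic variable-exponent theory of Chac\'{o}n-Cort\'{e}s and Rafeiro \cite{chacon2021fractional}; combining the two gives (C.2). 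The oscillation version (C.1)$\Rightarrow$(C.5) runs in parallel through the companion estimate $|b(y)-\mathcal{M}_B^p(b)(y)|\le \mathcal{M}_b^p(\dchi_B)(y)+2\|b^-\|_{L^\infty}$ on $B$, obtained from $0\le\mathcal{M}_B^p(|b|)-|b|\le\mathcal{M}_b^p(\dchi_B)$ and $|b|-b=2b^-$, again followed by the $L^{q(\cdot)}$-boundedness of $\mathcal{M}_b^p$.

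The crux is the necessity (C.4)$\Rightarrow$(C.1), where I would transfer the variable-exponent bound to a constant exponent. By the generalised Hölder inequality together with the norm relation $\|\dchi_B\|_{L^{q(\cdot)}}\|\dchi_B\|_{L^{q'(\cdot)}}\approx|B|_h$ (valid for $q(\cdot)\in\mathscr{P}^{\log}$), the functional in \labelcref{inequ:cor-nonlinear-max-var-bmo-4} dominates $\frac{1}{|B|_h}\int_B|b-\mathcal{M}_B^p(b)|$ uniformly in $B$, which is precisely the constant-exponent oscillation appearing in the He--Li characterisation \cref{lem:thm1.3-max-bmo-he2023necessary}; that lemma then yields $b\in\bmo(\mathbb{Q}_p^n)$ and $b^-\in L^\infty(\mathbb{Q}_p^n)$. (The bound $b^-\in L^\infty$ is also visible directly, since $\mathcal{M}_B^p(b)\ge|b|$ forces $2b^-\dchi_B\le|b-\mathcal{M}_B^p(b)|\dchi_B$ pointwise.) I expect the main obstacle to be exactly this transfer: one must check, with constants independent of $B$, both the duality $\|\dchi_B\|_{L^{q(\cdot)}}\|\dchi_B\|_{L^{q'(\cdot)}}\approx|B|_h$ and the generalised Hölder inequality in the $p$-adic variable-exponent setting, and one must confirm that $\mathcal{M}^p(b\dchi_B)=\mathcal{M}_B^p(b)$ holds as an exact identity on $B$ rather than merely up to a constant — both resting on the ultrametric structure of $\mathbb{Q}_p^n$.
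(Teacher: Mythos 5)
Your proposal is correct and follows essentially the same route as the paper: the paper obtains this corollary by specializing the proof of \cref{thm:nonlinear-frac-max-var-bmo} to $\alpha=0$, using exactly the ingredients you invoke --- the pointwise bounds of \cref{lem:nonlinear-frac-max-pointwise-estimate} and \cref{lem:frac-max-pointwise-estimate}, the test-function identity $[b,\mathcal{M}^{p}](\dchi_{B})=b-\mathcal{M}_{B}^{p}(b)$ on $B$ from \cref{lem:frac-max-pointwise-property}, generalized H\"{o}lder together with \cref{lem:norm-characteristic-p-adic} \labelcref{enumerate:charact-norm-conjugate-p-adic-variable}, and the He--Li characterization \cref{lem:thm1.3-max-bmo-he2023necessary}. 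Your minor deviations (proving (C.1)$\Rightarrow$(C.5) directly via a pointwise oscillation estimate rather than the paper's route (C.2)$\Rightarrow$(C.5), and the necessity step needing no analogue of the paper's $I_{2}$ term or of \cref{lem:frac-max-bmo-norm}) are precisely the simplifications that occur automatically at $\alpha=0$, and your implication graph still links all five statements.
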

%-----------------

Next, we discuss the mapping properties of $\mathcal{M}_{\alpha,b}^{p}$  in the context of $p$-adic  variable exponent Lebesgue spaces when the symbol belongs to a  $p$-adic   $ \bmo(\mathbb{Q}_{p}^{n})$. And  some new characterizations of BMO  via such commutators are given.
%-----------------
\begin{theorem} \label{thm:frac-max-var-bmo}
%---------------
 Let   $0< \alpha <n$ and $b$ be a locally integrable function on $\mathbb{Q}_{p}^{n}$.
 Then the following assertions are equivalent:
%--------------
\begin{enumerate}[label=(T.\arabic*)]   %\arabic
%%--------------
\item   $b\in  \bmo(\mathbb{Q}_{p}^{n})$.
 %-----------
    \label{enumerate:thm-frac-max-var-bmo-1}
%------------------
   \item The commutator $ \mathcal{M}_{\alpha,b}^{p}$ is bounded from $L^{r(\cdot)}(\mathbb{Q}_{p}^{n})$ to $L^{q(\cdot)}(\mathbb{Q}_{p}^{n})$ for all $r(\cdot), q(\cdot)\in   \mathscr{C}^{\log}(\mathbb{Q}_{p}^{n}) $ with $r(\cdot)\in \mathscr{P}(\mathbb{Q}_{p}^{n})$, $ r_{+}<\frac{n}{\alpha }$ and $1/q(\cdot) = 1/r(\cdot) -\alpha/n$.
%-----------
    \label{enumerate:thm-frac-max-var-bmo-2}
%--------------------------------
\item  The commutator $ \mathcal{M}_{\alpha,b}^{p}$  is bounded from $L^{r(\cdot)}(\mathbb{Q}_{p}^{n})$ to $L^{q(\cdot)}(\mathbb{Q}_{p}^{n})$  for some  $r(\cdot), q(\cdot)\in   \mathscr{C}^{\log}(\mathbb{Q}_{p}^{n}) $ with $r(\cdot)\in \mathscr{P}(\mathbb{Q}_{p}^{n})$,  $ r_{+}<\frac{n}{\alpha }$ and $1/q(\cdot) = 1/r(\cdot) -\alpha/n$.
%-----------
    \label{enumerate:thm-frac-max-var-bmo-3}
%------------
   \item There exists some  $r(\cdot), q(\cdot)\in   \mathscr{C}^{\log}(\mathbb{Q}_{p}^{n}) $ with $r(\cdot)\in \mathscr{P}(\mathbb{Q}_{p}^{n})$, $ r_{+}<\frac{n}{\alpha }$ and $1/q(\cdot) = 1/r(\cdot) -\alpha/n$, such that
%-----------
\begin{align} \label{inequ:thm-frac-max-var-bmo-4}
%-----------
 \sup_{\gamma\in \mathbb{Z} \atop x\in \mathbb{Q}_{p}^{n}}  \dfrac{\Big\| \big(b -b_{B_{\gamma}(x)} \big) \dchi_{B_{\gamma} (x)} \Big\|_{L^{q(\cdot)}(\mathbb{Q}_{p}^{n}) }}{\|\dchi_{B_{\gamma} (x)}\|_{L^{q(\cdot)}(\mathbb{Q}_{p}^{n}) }}  < \infty.
%-----------------
\end{align}
%-----------
\vspace{-1em} \label{enumerate:thm-frac-max-var-bmo-4}
%------------
   \item  For all   $r(\cdot), q(\cdot)\in   \mathscr{C}^{\log}(\mathbb{Q}_{p}^{n}) $ with $r(\cdot)\in \mathscr{P}(\mathbb{Q}_{p}^{n})$,    $ r_{+}<\frac{n}{\alpha }$ and $1/q(\cdot) = 1/r(\cdot) -\alpha/n$, such that \labelcref{inequ:thm-frac-max-var-bmo-4} holds.
%-----------
\label{enumerate:thm-frac-max-var-bmo-5}
%-----------------
\end{enumerate}
%-------------
%----------
\end{theorem}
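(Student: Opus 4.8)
The plan is to prove the cyclic chain $(T.1)\Rightarrow(T.2)\Rightarrow(T.3)\Rightarrow(T.4)\Rightarrow(T.1)$, and then to incorporate $(T.5)$ through $(T.1)\Rightarrow(T.5)\Rightarrow(T.4)$. Two of the links are immediate: $(T.2)\Rightarrow(T.3)$ and $(T.5)\Rightarrow(T.4)$ hold because a property valid for \emph{all} admissible pairs $r(\cdot),q(\cdot)$ holds in particular for \emph{some} such pair. The genuine work therefore splits into four tasks: the sufficiency $(T.1)\Rightarrow(T.2)$; the extraction of the geometric bound \labelcref{inequ:thm-frac-max-var-bmo-4} from the operator bound, $(T.3)\Rightarrow(T.4)$; and the two directions identifying \labelcref{inequ:thm-frac-max-var-bmo-4} with membership in $\bmo(\mathbb{Q}_{p}^{n})$, namely $(T.4)\Rightarrow(T.1)$ and $(T.1)\Rightarrow(T.5)$. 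Throughout I would rely on the $p$-adic counterparts of the standard facts for variable Lebesgue spaces: the generalized H\"older inequality, the John--Nirenberg inequality, the norm equivalence $\|\dchi_{B}\|_{L^{q(\cdot)}}\|\dchi_{B}\|_{L^{q'(\cdot)}}\approx|B|_{h}$, and the scaling $\|\dchi_{B}\|_{L^{r(\cdot)}}\approx|B|_{h}^{\alpha/n}\|\dchi_{B}\|_{L^{q(\cdot)}}$ forced by $1/q(\cdot)=1/r(\cdot)-\alpha/n$.

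For $(T.4)\Rightarrow(T.1)$ I would apply the generalized H\"older inequality in the dual pair $L^{q(\cdot)},L^{q'(\cdot)}$ to write, for every ball $B=B_{\gamma}(x)$,
\[\frac{1}{|B|_{h}}\dint_{B}|b(y)-b_{B}|\,\mathd y\le \frac{C}{|B|_{h}}\big\|(b-b_{B})\dchi_{B}\big\|_{L^{q(\cdot)}}\,\|\dchi_{B}\|_{L^{q'(\cdot)}}\approx C\,\frac{\|(b-b_{B})\dchi_{B}\|_{L^{q(\cdot)}}}{\|\dchi_{B}\|_{L^{q(\cdot)}}},\]
so that taking the supremum over $B$ shows that the quantity in \labelcref{inequ:thm-frac-max-var-bmo-4} dominates $\|b\|_{\bmo}$ and hence $b\in\bmo(\mathbb{Q}_{p}^{n})$. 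Conversely, for $(T.1)\Rightarrow(T.5)$ I would use the $p$-adic John--Nirenberg inequality together with the log-H\"older regularity of $q(\cdot)$ to obtain $\|(b-b_{B})\dchi_{B}\|_{L^{q(\cdot)}}\le C\|b\|_{\bmo}\|\dchi_{B}\|_{L^{q(\cdot)}}$ uniformly in $B$, which is exactly \labelcref{inequ:thm-frac-max-var-bmo-4} for every admissible $q(\cdot)$.

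For $(T.3)\Rightarrow(T.4)$ I would test the operator on characteristic functions. For a fixed ball $B$ and $x\in B$, retaining only the ball $B$ itself in the defining supremum yields the pointwise lower bound
\[\mathcal{M}_{\alpha,b}^{p}(\dchi_{B})(x)\ge \frac{1}{|B|_{h}^{1-\alpha/n}}\dint_{B}|b(x)-b(y)|\,\mathd y\ge |B|_{h}^{\alpha/n}\,|b(x)-b_{B}|.\]
Multiplying by $\dchi_{B}$, taking the $L^{q(\cdot)}$ norm, and combining the assumed bound $\|\mathcal{M}_{\alpha,b}^{p}(\dchi_{B})\|_{L^{q(\cdot)}}\le C\|\dchi_{B}\|_{L^{r(\cdot)}}$ with the scaling $\|\dchi_{B}\|_{L^{r(\cdot)}}\approx|B|_{h}^{\alpha/n}\|\dchi_{B}\|_{L^{q(\cdot)}}$, the factors $|B|_{h}^{\alpha/n}$ cancel and one arrives at $\|(b-b_{B})\dchi_{B}\|_{L^{q(\cdot)}}\le C\|\dchi_{B}\|_{L^{q(\cdot)}}$, i.e.\ \labelcref{inequ:thm-frac-max-var-bmo-4}.

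The main obstacle is the sufficiency $(T.1)\Rightarrow(T.2)$. Writing $|b(x)-b(y)|\le|b(x)-b_{B}|+|b(y)-b_{B}|$ splits $\mathcal{M}_{\alpha,b}^{p}f$ into two parts. The part carrying $|b(y)-b_{B}|$ is, via the generalized H\"older inequality in the Orlicz pair $(\exp L,\,L\log L)$ and John--Nirenberg, dominated pointwise by $C\|b\|_{\bmo}\,\mathcal{M}_{\alpha}^{p}(\mathcal{M}^{p}f)(x)$, which maps $L^{r(\cdot)}$ to $L^{q(\cdot)}$ by composing the boundedness of $\mathcal{M}^{p}$ on $L^{r(\cdot)}$ with that of $\mathcal{M}_{\alpha}^{p}$ from $L^{r(\cdot)}$ to $L^{q(\cdot)}$ established by Chac\'{o}n-Cort\'{e}s and Rafeiro \cite{chacon2021fractional}. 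The part carrying $|b(x)-b_{B}|$ is the delicate one: it admits \emph{no} pointwise majorization by a bounded operator, since already for a logarithmic symbol $b$ and $f$ the indicator of a ball it grows logarithmically at points where every iterated maximal function stays bounded. It must therefore be handled at the level of norms. Here I would estimate the Fefferman--Stein sharp maximal function $(\mathcal{M}_{\alpha,b}^{p}f)^{\#}$ by $C\|b\|_{\bmo}$ times $L^{s}$-averaged fractional maximal functions of $f$, and then invoke the sharp-maximal-function inequality in the $p$-adic variable Lebesgue setting; equivalently, one may first prove the corresponding weighted estimate with Muckenhoupt weights in the constant-exponent case and then pass to variable exponents by extrapolation. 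Verifying this sharp-function bound, together with the a priori finiteness needed to apply the Fefferman--Stein inequality, is where the bulk of the technical effort lies.
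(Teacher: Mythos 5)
Your treatment of the outer implications coincides with the paper's proof: (T.2)$\Rightarrow$(T.3) and (T.5)$\Rightarrow$(T.4) are indeed immediate; your characteristic-function test for (T.3)$\Rightarrow$(T.4), based on the pointwise bound $|b(y)-b_{B}|\le |B|_{h}^{-\alpha/n}\mathcal{M}_{\alpha,b}^{p}(\dchi_{B})(y)$ for $y\in B$ together with \cref{lem:norm-characteristic-p-adic}, is exactly the paper's argument, and so is your duality argument for (T.4)$\Rightarrow$(T.1). One structural difference is worth flagging: you reach (T.5) only through (T.1)$\Rightarrow$(T.5), for which you need the variable-exponent John--Nirenberg estimate $\|(b-b_{B})\dchi_{B}\|_{L^{q(\cdot)}(\mathbb{Q}_{p}^{n})}\le C\|b\|_{p,*}\|\dchi_{B}\|_{L^{q(\cdot)}(\mathbb{Q}_{p}^{n})}$. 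This is not among the paper's lemmas; it can be proved with the available tools (by duality, \eqref{equ:generalized-holder-inequality-orlicz-bmo}, and the boundedness of $\mathcal{M}^{p}$ on $L^{q'(\cdot)}(\mathbb{Q}_{p}^{n})$), but you would have to write it out, whereas the paper sidesteps it entirely by deducing (T.5) from (T.2) with the same test-function argument used for (T.3)$\Rightarrow$(T.4). Your chain could be repaired the same way.

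The genuine gap is (T.1)$\Rightarrow$(T.2), which you do not prove. You handle the $|b(y)-b_{B}|$ half of the splitting correctly (Orlicz--H\"older plus $\mathcal{M}_{\alpha}^{p}(\mathcal{M}^{p})\approx\mathcal{M}_{\alpha,L(\log L)}^{p}$, as in the paper), but you delegate the $|b(x)-b_{B}|$ half to a sharp-maximal-function/Fefferman--Stein argument, or to weighted estimates plus extrapolation, none of which is carried out and none of which exists in the $p$-adic variable-exponent setting within this paper's toolkit; building any of it is the actual substance of the implication. By contrast, the paper closes this step with \cref{lem:frac-max-pointwise-estimate}, i.e.\ the pointwise bound $\mathcal{M}_{\alpha,b}^{p}(f)(x)\le C\|b\|_{p,*}\big(\mathcal{M}^{p}(\mathcal{M}_{\alpha}^{p}f)(x)+\mathcal{M}_{\alpha}^{p}(\mathcal{M}^{p}f)(x)\big)$ a.e., which is precisely the majorization you declare impossible. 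You should know that your objection is in fact correct, and your logarithmic example defeats the paper's lemma: take $b(x)=\log|x|_{p}\in\bmo(\mathbb{Q}_{p}^{n})$ and $f=\dchi_{B_{0}}$; for $|x|_{p}=p^{-m}$ one has $B_{0}(x)=B_{0}$ and $|B_{0}|_{h}=1$, so $\mathcal{M}_{\alpha,b}^{p}(f)(x)\ge\int_{B_{0}}|b(x)-b(y)|\,\mathd y\ge (1-p^{-n})(\log p)\,m$, while a direct computation shows $\mathcal{M}^{p}(\mathcal{M}_{\alpha}^{p}f)+\mathcal{M}_{\alpha}^{p}(\mathcal{M}^{p}f)\le C(n,p,\alpha)$ on all of $B_{0}$; since each sphere $S_{-m}$ has positive Haar measure, the pointwise inequality fails for every constant $C$. (The defect in the paper's proof of that lemma is in the estimate of $I_{3}$, where \eqref{equ:generalized-holder-inequality-orlicz-bmo} is applied to $b-b_{B_{\gamma+1}(x)}$ over balls $B_{\gamma'}(x)$ with $\gamma'$ arbitrarily large, ignoring the drift $|b_{B_{\gamma'}(x)}-b_{B_{\gamma+1}(x)}|\le p^{n}\|b\|_{p,*}(\gamma'-\gamma-1)$, which is exactly what the example exploits.) So your diagnosis is sound and in fact exposes that the paper's own proof of (T.1)$\Rightarrow$(T.2) is flawed at the same spot; but a correct diagnosis is not a proof. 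As it stands, your proposal leaves the central sufficiency open, and closing it requires genuinely establishing a norm-level argument (a correct constant-exponent proof of the boundedness of $\mathcal{M}_{\alpha,b}^{p}$ transferred to $L^{r(\cdot)}(\mathbb{Q}_{p}^{n})$, or a $p$-adic sharp-function/extrapolation theory) that neither you nor the paper supplies.
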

%-----------------

%For the case of $r(\cdot)$ and $q(\cdot)$ being constants, we have the following result from \cref{thm:frac-max-var-bmo}, which is new even for this case.
When  $r(\cdot)$ and $q(\cdot)$ are constants, we get the following result from \cref{thm:frac-max-var-bmo}.

%-----------------
\begin{corollary}  \label{cor:frac-max-bmo}
%---------------
 Let   $0< \alpha< n$ and $b$ be a locally integrable function on $\mathbb{Q}_{p}^{n}$. Then the following statements are equivalent:
%--------------
\begin{enumerate}[label=(C.\arabic*)]
%%--------------
\item   $b\in  \bmo(\mathbb{Q}_{p}^{n})$.
 %-----------
    \label{enumerate:cor-frac-max-bmo-1}
%------------------
   \item The commutator $\mathcal{M}_{\alpha,b}^{p}$ is bounded from $L^{r}(\mathbb{Q}_{p}^{n})$ to $L^{q}(\mathbb{Q}_{p}^{n})$ for all $r, q$ with $1<r<\frac{n}{\alpha}$ and $1/q = 1/r -\alpha/n$.
%-----------
    \label{enumerate:cor-frac-max-bmo-2}
%--------------------------------
\item  The commutator $\mathcal{M}_{\alpha,b}^{p}$  is bounded from $L^{r}(\mathbb{Q}_{p}^{n})$ to $L^{q}(\mathbb{Q}_{p}^{n})$ for some $r, q$ with $1<r<\frac{n}{\alpha}$ and $1/q = 1/r -\alpha/n$.
%-----------
    \label{enumerate:cor-frac-max-bmo-3}
%------------
   \item There exists some  $r, q$ with $1<r<\frac{n}{\alpha}$ and $1/q = 1/r -\alpha/n$, such that
%-----------
\begin{align} \label{inequ:cor-frac-max-bmo-4}
%-----------
  \sup_{\gamma\in \mathbb{Z} \atop x\in \mathbb{Q}_{p}^{n}}  \dfrac{1}{|B_{\gamma} (x)|_{h}} \dint_{B_{\gamma} (x)} \Big|b(y)-b_{B_{\gamma}(x)} \Big|^{q} \mathd y <\infty.
%-----------------
\end{align}
%-----------
 \vspace{-1em} \label{enumerate:cor-frac-max-bmo-4}
%------------
   \item  For all   ${r, q}$ with  $1<r<\frac{n}{\alpha}$ and $1/q = 1/r -\alpha/n$, such that \labelcref{inequ:cor-frac-max-bmo-4} holds.
%-----------
\label{enumerate:cor-frac-max-bmo-5}
%-----------------
\end{enumerate}
%-------------
%----------
\end{corollary}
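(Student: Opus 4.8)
The plan is to derive this corollary directly from \cref{thm:frac-max-var-bmo} by restricting the variable exponents there to constants. The crucial point is that each pair of constants $r,q$ with $1<r<n/\alpha$ and $1/q=1/r-\alpha/n$ is an admissible pair of exponents in the sense of \cref{thm:frac-max-var-bmo}: a constant function is log-H\"older continuous, hence lies in $\mathscr{C}^{\log}(\mathbb{Q}_{p}^{n})$; the constraint $r>1$ gives $r\in\mathscr{P}(\mathbb{Q}_{p}^{n})$; and constancy forces $r_{+}=r<n/\alpha$. Thus the constant exponents form a subclass of those allowed in the variable-exponent theorem, and it is this inclusion that transfers the equivalences.

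First I would translate the oscillation conditions. For a constant exponent $q$ and any $p$-adic ball $B=B_{\gamma}(x)$, the $L^{q}$ norm of an indicator is $\|\dchi_{B}\|_{L^{q}(\mathbb{Q}_{p}^{n})}=|B|_{h}^{1/q}$, while
\begin{align*}
\big\|\big(b-b_{B}\big)\dchi_{B}\big\|_{L^{q}(\mathbb{Q}_{p}^{n})}
=\Big(\dint_{B}\big|b(y)-b_{B}\big|^{q}\,\mathd y\Big)^{1/q}.
\end{align*}
Hence the ratio in \labelcref{inequ:thm-frac-max-var-bmo-4}, for constant $q$, equals the $q$-th root of the average in \labelcref{inequ:cor-frac-max-bmo-4}; taking the supremum over $\gamma,x$, finiteness of one is equivalent to finiteness of the other. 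This identifies statements (T.4) and (T.5) of \cref{thm:frac-max-var-bmo}, read only for constant exponents, with statements (C.4) and (C.5) here.

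Next I would match the quantifiers. Statement (C.1) is verbatim (T.1), so it remains to thread the other conditions through \cref{thm:frac-max-var-bmo}. Because constant exponents sit inside the variable class, the universal statement (T.2) specializes to (C.2), while the existential constant statement (C.3) promotes to (T.3); together with the trivial implication (C.2)$\Rightarrow$(C.3) and the equivalences of \cref{thm:frac-max-var-bmo}, this yields the cycle (C.1)$=$(T.1)$\Rightarrow$(T.2)$\Rightarrow$(C.2)$\Rightarrow$(C.3)$\Rightarrow$(T.3)$\Rightarrow$(T.1)$=$(C.1), closing the loop among (C.1)--(C.3). Running the identical argument with the universal/existential pair (T.5),(T.4) and (C.5),(C.4) in the roles of (T.2),(T.3) and (C.2),(C.3), and invoking the norm identity above, closes the loop among (C.1),(C.4),(C.5).

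No new estimate is needed, so there is no analytic obstacle here; the work lies entirely in the bookkeeping. The one point demanding care is the direction of the quantifiers: ``for all variable'' must be matched with ``for all constant'' through the inclusion, and ``for some constant'' with ``for some variable'' through the same inclusion, so that each implication points the way needed to close both cycles. Provided these directions are respected, the corollary follows at once from \cref{thm:frac-max-var-bmo}.
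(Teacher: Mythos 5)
Your proposal is correct and is essentially the paper's own derivation: the paper obtains \cref{cor:frac-max-bmo} precisely by specializing \cref{thm:frac-max-var-bmo} to constant exponents, and your bookkeeping (constants lie in $\mathscr{C}^{\log}(\mathbb{Q}_{p}^{n})\cap\mathscr{P}(\mathbb{Q}_{p}^{n})$, the identity $\|\dchi_{B}\|_{L^{q}(\mathbb{Q}_{p}^{n})}=|B|_{h}^{1/q}$ turning the norm ratio into the averaged oscillation, and routing the quantifiers through the existential statements to close the cycles) supplies exactly the details the paper leaves implicit.
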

%-----------------

%--------------------
\begin{remark}   \label{rem.cor-frac-max-bmo}
%-------
\begin{enumerate}[ label=(\roman*)]
%-------
\item   For the case $\alpha=0$,  \cref{cor:frac-max-bmo} is also holds, and the equivalence of \labelcref{enumerate:cor-frac-max-bmo-1}, \labelcref{enumerate:cor-frac-max-bmo-2} and \labelcref{enumerate:cor-frac-max-bmo-3}  was proved in \cite[Theorem 1.1]{he2023necessary}.
%-------
\item  Moreover,  the equivalence of \labelcref{enumerate:cor-frac-max-bmo-1}, \labelcref{enumerate:cor-frac-max-bmo-4} and \labelcref{enumerate:cor-frac-max-bmo-5}  is contained in \cref{lem:cor-5.17-kim2009carleson} below.
%-------
\end{enumerate}
%-------------
%-------
\end{remark}
%-------------

 For the case $\alpha=0$,  we give the follows result, which is valid and new,   from  \cref{thm:frac-max-var-bmo}.
%-----------------
\begin{corollary}  \label{cor:frac-max-var-bmo}
%---------------
 Let  $b$ be a locally integrable function  on $\mathbb{Q}_{p}^{n}$. Then the following statements are equivalent:
%--------------
\begin{enumerate}[label=(C.\arabic*)]
%%--------------
\item   $b\in  \bmo(\mathbb{Q}_{p}^{n})$.
 %-----------
    \label{enumerate:cor-frac-max-var-bmo-1}
%------------------
   \item The commutator $ \mathcal{M}_{b}^{p}$ is bounded on  $L^{q(\cdot)}(\mathbb{Q}_{p}^{n})$ for all $  q(\cdot)\in   \mathscr{P}^{\log}(\mathbb{Q}_{p}^{n}) $.
%-----------
    \label{enumerate:cor-frac-max-var-bmo-2}
%--------------------------------
\item  The commutator $ \mathcal{M}_{b}^{p}$  is bounded on $L^{q(\cdot)}(\mathbb{Q}_{p}^{n})$  for some  $  q(\cdot)\in   \mathscr{P}^{\log}(\mathbb{Q}_{p}^{n}) $.
%-----------
    \label{enumerate:cor-frac-max-var-bmo-3}
%------------
   \item There exists some  $  q(\cdot)\in   \mathscr{P}^{\log}(\mathbb{Q}_{p}^{n}) $  such that
%-----------
\begin{align} \label{inequ:cor-frac-max-var-bmo-4}
%-----------
 \sup_{\gamma\in \mathbb{Z} \atop x\in \mathbb{Q}_{p}^{n}}  \dfrac{\Big\| \big(b -b_{B_{\gamma}(x)} \big) \dchi_{B_{\gamma} (x)} \Big\|_{L^{q(\cdot)}(\mathbb{Q}_{p}^{n}) }}{\|\dchi_{B_{\gamma} (x)}\|_{L^{q(\cdot)}(\mathbb{Q}_{p}^{n}) }}  < \infty.
%-----------------
\end{align}
%-----------
\vspace{-1em}\label{enumerate:cor-frac-max-var-bmo-4}
%------------
   \item  For all   $  q(\cdot)\in   \mathscr{P}^{\log}(\mathbb{Q}_{p}^{n}) $  such that \labelcref{inequ:cor-frac-max-var-bmo-4} holds.
%-----------
\label{enumerate:cor-frac-max-var-bmo-5}
%---------
\end{enumerate}
%-------------
%----------
\end{corollary}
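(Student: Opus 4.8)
The plan is to regard \cref{cor:frac-max-var-bmo} as the endpoint case $\alpha=0$ of \cref{thm:frac-max-var-bmo} and to run the same cyclic scheme of implications, taking advantage of the simplifications available when $\alpha=0$. Indeed, for $\alpha=0$ the balance $1/q(\cdot)=1/r(\cdot)-\alpha/n$ forces $q(\cdot)=r(\cdot)$, the restriction $r_{+}<n/\alpha$ is vacuous, and $\mathcal{M}_{b}^{p}$ acts on the single space $L^{q(\cdot)}(\mathbb{Q}_{p}^{n})$; the admissible exponents are then exactly those in $\mathscr{P}^{\log}(\mathbb{Q}_{p}^{n})$, i.e. the class for which the $p$-adic maximal operator $\mathcal{M}^{p}$ is bounded on $L^{q(\cdot)}(\mathbb{Q}_{p}^{n})$. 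I would prove the chain \labelcref{enumerate:cor-frac-max-var-bmo-1} $\Rightarrow$ \labelcref{enumerate:cor-frac-max-var-bmo-2} $\Rightarrow$ \labelcref{enumerate:cor-frac-max-var-bmo-3} $\Rightarrow$ \labelcref{enumerate:cor-frac-max-var-bmo-4} $\Rightarrow$ \labelcref{enumerate:cor-frac-max-var-bmo-1}, supplemented by \labelcref{enumerate:cor-frac-max-var-bmo-1} $\Rightarrow$ \labelcref{enumerate:cor-frac-max-var-bmo-5} $\Rightarrow$ \labelcref{enumerate:cor-frac-max-var-bmo-4}, which renders all five statements equivalent.

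For the sufficiency \labelcref{enumerate:cor-frac-max-var-bmo-1} $\Rightarrow$ \labelcref{enumerate:cor-frac-max-var-bmo-2} I would first establish the $p$-adic analogue of the classical pointwise domination $\mathcal{M}_{b}^{p}(f)(x)\lesssim\|b\|_{\bmo}\,\mathcal{M}^{p}(\mathcal{M}^{p}f)(x)$. This comes from the splitting $|b(x)-b(y)|\le|b(x)-b_{B}|+|b(y)-b_{B}|$ on each ball $B$, a generalized Hölder inequality pairing the Young functions $t\mapsto t(1+\log^{+}t)$ and $t\mapsto e^{t}-1$, the $p$-adic John--Nirenberg inequality to control $b-b_{B}$ in the exponential Orlicz norm by $\|b\|_{\bmo}$, and the equivalence of the resulting $L\log L$ maximal operator with the twofold iterate $\mathcal{M}^{p}\circ\mathcal{M}^{p}$. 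Since $q(\cdot)\in\mathscr{P}^{\log}(\mathbb{Q}_{p}^{n})$ guarantees that $\mathcal{M}^{p}$ is bounded on $L^{q(\cdot)}(\mathbb{Q}_{p}^{n})$, two applications yield $\|\mathcal{M}_{b}^{p}f\|_{L^{q(\cdot)}}\lesssim\|b\|_{\bmo}\|f\|_{L^{q(\cdot)}}$. The steps \labelcref{enumerate:cor-frac-max-var-bmo-2} $\Rightarrow$ \labelcref{enumerate:cor-frac-max-var-bmo-3} and \labelcref{enumerate:cor-frac-max-var-bmo-5} $\Rightarrow$ \labelcref{enumerate:cor-frac-max-var-bmo-4} are immediate, since ``for all'' implies ``for some''.

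For the necessity \labelcref{enumerate:cor-frac-max-var-bmo-3} $\Rightarrow$ \labelcref{enumerate:cor-frac-max-var-bmo-4} I would test the assumed boundedness on $f=\dchi_{B}$ for a fixed ball $B=B_{\gamma}(x)$. Because in $\mathbb{Q}_{p}^{n}$ every point of $B$ is a center of $B$, the ball $B$ itself is admissible in the supremum defining $\mathcal{M}_{b}^{p}(\dchi_{B})(y)$ at any $y\in B$, so $\mathcal{M}_{b}^{p}(\dchi_{B})(y)\ge\frac{1}{|B|_{h}}\int_{B}|b(y)-b(z)|\,\mathd z\ge|b(y)-b_{B}|$, whence $|(b-b_{B})\dchi_{B}|\le\mathcal{M}_{b}^{p}(\dchi_{B})$ pointwise. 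Taking $L^{q(\cdot)}$ norms, invoking boundedness, and dividing by $\|\dchi_{B}\|_{L^{q(\cdot)}}$ produces exactly \labelcref{inequ:cor-frac-max-var-bmo-4} after the supremum over balls. Finally, both \labelcref{enumerate:cor-frac-max-var-bmo-4} $\Rightarrow$ \labelcref{enumerate:cor-frac-max-var-bmo-1} and \labelcref{enumerate:cor-frac-max-var-bmo-1} $\Rightarrow$ \labelcref{enumerate:cor-frac-max-var-bmo-5} rest on the variable-exponent John--Nirenberg equivalence $\|b\|_{\bmo}\approx\sup_{B}\|(b-b_{B})\dchi_{B}\|_{L^{q(\cdot)}}/\|\dchi_{B}\|_{L^{q(\cdot)}}$ valid for every $q(\cdot)\in\mathscr{P}^{\log}(\mathbb{Q}_{p}^{n})$, the $p$-adic variable-exponent counterpart of \cref{lem:cor-5.17-kim2009carleson}; this shows that \labelcref{inequ:cor-frac-max-var-bmo-4}, for some or for all such $q(\cdot)$, is equivalent to $b\in\bmo(\mathbb{Q}_{p}^{n})$ and closes the cycle.

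The main obstacle I anticipate is the sufficiency step, namely verifying that the $p$-adic maximal commutator is genuinely controlled by the iterate $\mathcal{M}^{p}\circ\mathcal{M}^{p}$ uniformly over the ultrametric ball structure of $\mathbb{Q}_{p}^{n}$, with constant $\lesssim\|b\|_{\bmo}$, and that this pointwise domination interacts correctly with the boundedness of $\mathcal{M}^{p}$ on $L^{q(\cdot)}$ so that the final constants depend only on $\|b\|_{\bmo}$ and the log-Hölder data of $q(\cdot)$. A secondary point is confirming the variable-exponent John--Nirenberg equivalence over $\mathbb{Q}_{p}^{n}$, which underlies the last necessity implication and the ``for all'' statement; once these two ingredients are secured, the corollary follows essentially verbatim from the $\alpha=0$ specialization of the argument for \cref{thm:frac-max-var-bmo}.
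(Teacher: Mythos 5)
Your main cycle \labelcref{enumerate:cor-frac-max-var-bmo-1} $\Rightarrow$ \labelcref{enumerate:cor-frac-max-var-bmo-2} $\Rightarrow$ \labelcref{enumerate:cor-frac-max-var-bmo-3} $\Rightarrow$ \labelcref{enumerate:cor-frac-max-var-bmo-4} $\Rightarrow$ \labelcref{enumerate:cor-frac-max-var-bmo-1} is exactly the paper's argument (the proof of \cref{thm:frac-max-var-bmo} specialized to $\alpha=0$): the sufficiency comes from the pointwise bound $\mathcal{M}_{b}^{p}(f)(x)\le C\|b\|_{p,*}\,\mathcal{M}^{p}\big(\mathcal{M}^{p}(f)\big)(x)$, which is \cref{lem:frac-max-pointwise-estimate} at $\alpha=0$, combined with \cref{lem.thm-5.2-variable-max-bounded}; the implication \labelcref{enumerate:cor-frac-max-var-bmo-3} $\Rightarrow$ \labelcref{enumerate:cor-frac-max-var-bmo-4} is the paper's testing of $\mathcal{M}_{b}^{p}$ on $\dchi_{B_{\gamma}(x)}$ via $|b(y)-b_{B_{\gamma}(x)}|\le \mathcal{M}_{b}^{p}(\dchi_{B_{\gamma}(x)})(y)$ for $y\in B_{\gamma}(x)$, i.e.\ \labelcref{inequ:pf-3-4-frac-max} with $\alpha=0$; and \labelcref{enumerate:cor-frac-max-var-bmo-4} $\Rightarrow$ \labelcref{enumerate:cor-frac-max-var-bmo-1} needs only the generalized H\"{o}lder inequality (\cref{lem:holder-inequality-p-adic} \labelcref{enumerate:holder-p-adic-variable-1}) and \cref{lem:norm-characteristic-p-adic} \labelcref{enumerate:charact-norm-conjugate-p-adic-variable}. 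Your identification of the admissible exponent class as $\mathscr{P}^{\log}(\mathbb{Q}_{p}^{n})$ when $\alpha=0$ is also correct.

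The one place you genuinely diverge is statement \labelcref{enumerate:cor-frac-max-var-bmo-5}, and this is where your plan has its only soft spot. The paper derives it as \labelcref{enumerate:cor-frac-max-var-bmo-2} $\Rightarrow$ \labelcref{enumerate:cor-frac-max-var-bmo-5}: once the commutator is bounded for \emph{all} $q(\cdot)\in\mathscr{P}^{\log}(\mathbb{Q}_{p}^{n})$, the same characteristic-function testing yields \labelcref{inequ:cor-frac-max-var-bmo-4} for every such exponent, so no self-improvement result for BMO in variable norms is ever needed. You instead route \labelcref{enumerate:cor-frac-max-var-bmo-1} $\Rightarrow$ \labelcref{enumerate:cor-frac-max-var-bmo-5} through a ``variable-exponent John--Nirenberg equivalence'' $\|b\|_{\bmo}\approx\sup_{B}\|(b-b_{B})\dchi_{B}\|_{L^{q(\cdot)}}/\|\dchi_{B}\|_{L^{q(\cdot)}}$, treated as an external ingredient to be confirmed. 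For \labelcref{enumerate:cor-frac-max-var-bmo-4} $\Rightarrow$ \labelcref{enumerate:cor-frac-max-var-bmo-1} only the easy (H\"{o}lder) half of that equivalence is needed, so that step is fine; but \labelcref{enumerate:cor-frac-max-var-bmo-1} $\Rightarrow$ \labelcref{enumerate:cor-frac-max-var-bmo-5} needs the hard half, which is not established anywhere in the paper for $\mathbb{Q}_{p}^{n}$ and which you leave unproved. Fortunately the gap closes with what you already have: for each fixed $q(\cdot)$, your step \labelcref{enumerate:cor-frac-max-var-bmo-1} $\Rightarrow$ \labelcref{enumerate:cor-frac-max-var-bmo-2} gives $\|\mathcal{M}_{b}^{p}(\dchi_{B})\|_{L^{q(\cdot)}}\le C\|\dchi_{B}\|_{L^{q(\cdot)}}$, and then your own testing inequality gives $\|(b-b_{B})\dchi_{B}\|_{L^{q(\cdot)}}\le\|\mathcal{M}_{b}^{p}(\dchi_{B})\|_{L^{q(\cdot)}}\le C\|\dchi_{B}\|_{L^{q(\cdot)}}$, which is \labelcref{inequ:cor-frac-max-var-bmo-4} for that exponent; this is precisely the paper's route and makes the unproven John--Nirenberg-type lemma unnecessary.
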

%-----------------

Finally we give  the mapping properties of $\mathcal{M}_{\alpha,b}^{p}$ and $[b,\mathcal{M}_{\alpha}^{p}] $ on $p$-adic  Morrey spaces when the symbol belongs to a  $p$-adic   $ \bmo(\mathbb{Q}_{p}^{n})$ (see   \cref{sec:preliminary}  below).

%-----------------------
\begin{theorem} \label{thm:nonlinear-frac-max-bmo-morrey}
%-----------------------
Suppose that   $b$ is a locally integrable function  on $\mathbb{Q}_{p}^{n}$,   $0 <\alpha <n$. $1 <r< n/\alpha$ and $0<\lambda <n-\alpha r$.
 % The following statements are equivalent:
%-----------------------
\begin{enumerate}[label=(T.\arabic*)]
 %%-------------------------------------
  \item  If  $1/q=1/r-\alpha/(n-\lambda)$.  Then $b\in  \bmo(\mathbb{Q}_{p}^{n})$  and $b^{-}\in L^{\infty}(\mathbb{Q}_{p}^{n})$  if and only if $ [b,\mathcal{M}_{\alpha}^{p}] $ is bounded from  $L^{r,\lambda}(\mathbb{Q}_{p}^{n})$ to $L^{q,\lambda}(\mathbb{Q}_{p}^{n})$.
%-----------
    \label{enumerate:thm-nonlinear-frac-max-bmo-morrey-1}
%--------------------------------
   \item If  $1/q=1/r-\alpha/n$ and $\lambda/r =\mu/q$.  Then $b\in  \bmo(\mathbb{Q}_{p}^{n})$  and $b^{-}\in L^{\infty}(\mathbb{Q}_{p}^{n})$  if and only if   $ [b,\mathcal{M}_{\alpha}^{p}] $ is bounded from  $L^{r,\lambda}(\mathbb{Q}_{p}^{n})$ to $L^{q,\mu}(\mathbb{Q}_{p}^{n})$.
%-----------
    \label{enumerate:thm-nonlinear-frac-max-bmo-morrey-2}
%------------------------
\end{enumerate}
%--------------------
\end{theorem}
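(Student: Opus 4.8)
The plan is to prove each of the two stated equivalences by separating sufficiency (the conditions $b\in\bmo(\mathbb{Q}_{p}^{n})$ and $b^{-}\in L^{\infty}(\mathbb{Q}_{p}^{n})$ imply the Morrey boundedness of $[b,\mathcal{M}_{\alpha}^{p}]$) from necessity (boundedness implies the two conditions), and to run parts \labelcref{enumerate:thm-nonlinear-frac-max-bmo-morrey-1} and \labelcref{enumerate:thm-nonlinear-frac-max-bmo-morrey-2} in parallel. The only place where the two parts diverge is in which scaling of the $p$-adic Adams--Spanne mapping for $\mathcal{M}_{\alpha}^{p}$ on Morrey spaces is invoked: the homogeneity-preserving scaling with gain $\alpha/(n-\lambda)$ and target $L^{q,\lambda}(\mathbb{Q}_{p}^{n})$ in \labelcref{enumerate:thm-nonlinear-frac-max-bmo-morrey-1}, versus the scaling with gain $\alpha/n$, the relation $\lambda/r=\mu/q$, and target $L^{q,\mu}(\mathbb{Q}_{p}^{n})$ in \labelcref{enumerate:thm-nonlinear-frac-max-bmo-morrey-2}.

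For sufficiency the starting point is the pointwise estimate
\begin{align*}
\bigl|[b,\mathcal{M}_{\alpha}^{p}](f)(x)\bigr|\le \mathcal{M}_{\alpha,b}^{p}(f)(x)+C\,\|b^{-}\|_{L^{\infty}(\mathbb{Q}_{p}^{n})}\,\mathcal{M}_{\alpha}^{p}(f)(x),
\end{align*}
with $C$ an absolute constant, which I would derive exactly as in the Lebesgue case: the elementary inequality $b(x)-|b(y)|\le|b(x)-b(y)|$ gives $b(x)\mathcal{M}_{\alpha}^{p}(f)(x)-\mathcal{M}_{\alpha}^{p}(bf)(x)\le\mathcal{M}_{\alpha,b}^{p}(f)(x)$, while writing $|b|=b+2b^{-}$ and using $b^{-}\in L^{\infty}(\mathbb{Q}_{p}^{n})$ controls the reverse direction at the cost of the extra $\mathcal{M}_{\alpha}^{p}$ term. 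Granting this, sufficiency reduces to the Morrey boundedness of the two positive operators on the right: $\mathcal{M}_{\alpha}^{p}$ maps $L^{r,\lambda}(\mathbb{Q}_{p}^{n})$ into the appropriate target Morrey space by the $p$-adic Adams--Spanne theorem, and the maximal commutator $\mathcal{M}_{\alpha,b}^{p}$ maps it into the same space whenever $b\in\bmo(\mathbb{Q}_{p}^{n})$.

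For necessity I would test on the characteristic functions $\dchi_{B}$ of $p$-adic balls $B=B_{\gamma}(x_{0})$. Because two $p$-adic balls are always nested or disjoint, a direct computation yields, for every $x\in B$,
\begin{align*}
\mathcal{M}_{\alpha}^{p}(\dchi_{B})(x)=|B|_{h}^{\alpha/n},\qquad \mathcal{M}_{\alpha}^{p}(b\,\dchi_{B})(x)=\mathcal{M}_{\alpha,B}^{p}(b)(x),
\end{align*}
whence
\begin{align*}
[b,\mathcal{M}_{\alpha}^{p}](\dchi_{B})(x)=|B|_{h}^{\alpha/n}\Bigl(b(x)-|B|_{h}^{-\alpha/n}\mathcal{M}_{\alpha,B}^{p}(b)(x)\Bigr),\qquad x\in B.
\end{align*}
Feeding this into the assumed boundedness, using that $\|\dchi_{B}\|_{L^{r,\lambda}(\mathbb{Q}_{p}^{n})}$ is comparable to $|B|_{h}^{(n-\lambda)/(nr)}$ and bounding the target Morrey norm from below by restricting its defining supremum to the single ball $B$, the powers of $|B|_{h}$ cancel, and under either exponent relation one arrives at precisely
\begin{align*}
\sup_{\gamma\in\mathbb{Z}\atop x_{0}\in\mathbb{Q}_{p}^{n}}\frac{1}{|B_{\gamma}(x_{0})|_{h}}\dint_{B_{\gamma}(x_{0})}\Bigl|b(y)-|B_{\gamma}(x_{0})|_{h}^{-\alpha/n}\mathcal{M}_{\alpha,B_{\gamma}(x_{0})}^{p}(b)(y)\Bigr|^{q}\,\mathd y<\infty,
\end{align*}
which is condition \labelcref{inequ:cor-nonlinear-frac-max-bmo-4}. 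The equivalence \labelcref{enumerate:cor-nonlinear-frac-max-bmo-1}$\Leftrightarrow$\labelcref{enumerate:cor-nonlinear-frac-max-bmo-4} of \cref{cor:nonlinear-frac-max-bmo} then produces $b\in\bmo(\mathbb{Q}_{p}^{n})$ and $b^{-}\in L^{\infty}(\mathbb{Q}_{p}^{n})$.

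I expect the main obstacle to be the sufficiency step that demands the Morrey boundedness of the maximal commutator $\mathcal{M}_{\alpha,b}^{p}$ for $b\in\bmo(\mathbb{Q}_{p}^{n})$: being neither of convolution type nor a genuine maximal operator, it is not covered by a standard theorem. I would treat it by an Adams-type argument, first dominating $\mathcal{M}_{\alpha,b}^{p}(f)$ pointwise by $\|b\|_{\bmo(\mathbb{Q}_{p}^{n})}$ times a $p$-adic (fractional) maximal operator of $|f|$ at a slightly higher integrability---splitting $|b(x)-b(y)|\le|b(x)-b_{B}|+|b(y)-b_{B}|$ and combining H\"older's inequality with the $p$-adic John--Nirenberg inequality---and then transferring the Lebesgue estimates already available to the Morrey scale through the characterization of Morrey norms by uniform local $L^{r}$ bounds on balls. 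A secondary, purely bookkeeping, difficulty is keeping track of the two exponent relations in the sufficiency direction; the necessity computation above is insensitive to this choice, since the test functions probe only the diagonal balls $B'=B$, where both scalings yield the same power of $|B|_{h}$.
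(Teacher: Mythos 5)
Your necessity arguments are correct and coincide with the paper's own: you test on $\dchi_{B}$, use the identities $\mathcal{M}_{\alpha}^{p}(\dchi_{B})=|B|_{h}^{\alpha/n}$ and $\mathcal{M}_{\alpha}^{p}(b\dchi_{B})=\mathcal{M}_{\alpha,B}^{p}(b)$ on $B$ (\cref{lem:frac-max-pointwise-property}, legitimate because $p$-adic balls are nested or disjoint), restrict the target Morrey supremum to the ball $B$ itself, use $\|\dchi_{B}\|_{L^{r,\lambda}(\mathbb{Q}_{p}^{n})}=|B|_{h}^{(1-\lambda/n)/r}$, and conclude through \cref{cor:nonlinear-frac-max-bmo}; your remark that both exponent relations produce the same cancellation on diagonal balls is exactly what happens in the paper. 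Likewise, the opening reduction of your sufficiency argument, $|[b,\mathcal{M}_{\alpha}^{p}]f|\le\mathcal{M}_{\alpha,b}^{p}f+2\|b^{-}\|_{L^{\infty}(\mathbb{Q}_{p}^{n})}\mathcal{M}_{\alpha}^{p}f$, is the paper's \cref{lem:nonlinear-frac-max-pointwise-estimate}.

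The genuine gap is in the step you yourself flag as the main obstacle. The pointwise domination you propose,
\begin{align*}
\mathcal{M}_{\alpha,b}^{p}(f)(x)\ \le\ C\,\|b\|_{\bmo(\mathbb{Q}_{p}^{n})}\,\big(\mathcal{M}_{\alpha s}^{p}(|f|^{s})(x)\big)^{1/s},\qquad s>1,
\end{align*}
is false, and so is every estimate of the form $\mathcal{M}_{\alpha,b}^{p}(f)\le C\|b\|_{\bmo(\mathbb{Q}_{p}^{n})}\,G(f)$ in which $G$ does not depend on $b$ and satisfies $G(\dchi_{B})\le C|B|_{h}^{\alpha/n}$ on $B$ (this covers $\big(\mathcal{M}_{\alpha s}^{p}(|\cdot|^{s})\big)^{1/s}$, $\mathcal{M}_{\alpha,L(\log L)}^{p}$, and compositions of maximal operators alike). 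Indeed, take $f=\dchi_{B}$ and keep only the ball $B$ in the supremum defining $\mathcal{M}_{\alpha,b}^{p}$: for a.e.\ $x\in B$,
\begin{align*}
\mathcal{M}_{\alpha,b}^{p}(\dchi_{B})(x)\ \ge\ \frac{1}{|B|_{h}^{1-\alpha/n}}\dint_{B}|b(x)-b(y)|\,\mathd y\ \ge\ |B|_{h}^{\alpha/n}\,\big|b(x)-b_{B}\big|,
\end{align*}
so your inequality would force $|b(x)-b_{B}|\le C\|b\|_{\bmo(\mathbb{Q}_{p}^{n})}$ for a.e.\ $x\in B$ and every ball $B$, i.e.\ every BMO function would be locally uniformly bounded; $b(x)=\log|x|_{p}$ already violates this. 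The defect sits precisely in your splitting $|b(x)-b(y)|\le|b(x)-b_{B}|+|b(y)-b_{B}|$: H\"{o}lder plus John--Nirenberg does control the integral of the second term, but the first term produces $\sup_{B\ni x}|b(x)-b_{B}|\,|B|_{h}^{\alpha/n-1}\int_{B}|f|$, and $|b(x)-b_{B}|$ admits no bound uniform in $B$ for fixed $x$.

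This is why the paper argues differently at this point: it invokes \cref{lem:frac-max-pointwise-estimate}, in whose proof the dangerous factor is first buried inside an $\mathcal{M}_{\delta}^{p}$-average with $0<\delta<1$ (justified by the Lebesgue differentiation theorem, \cref{lem:cor2.11-kim2009carleson}), so that $|b(\cdot)-b_{B_{\gamma+1}(x)}|$ only ever appears under an integral where John--Nirenberg applies; the local piece is handled by Kolmogorov's inequality (\cref{lem:Kolmogorov-inequality-p-adic-lem2.5-he2023necessary}) together with the weak-type bound of \cref{lem:frac-max-p-adic-estimate}, and the result is an estimate by the compositions $\mathcal{M}^{p}(\mathcal{M}_{\alpha}^{p}f)+\mathcal{M}_{\alpha}^{p}(\mathcal{M}^{p}f)$, whose Morrey boundedness then follows from \cref{lem:lem10-he2022characterization,lem:max-function-bound-morrey}. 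To repair your sufficiency direction, replace the Adams-type pointwise claim by this lemma, or by a version of your argument run at the level of averages or norms, in which $|b(x)-b_{B}|$ is always integrated in $x$ before any supremum over balls is taken. The $\dchi_{B}$ test above shows that this caution is not optional: it excludes any raw pointwise domination of the maximal commutator by a $b$-independent operator, a point worth bearing in mind even when citing composition-type pointwise estimates from the literature.
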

%-----------------

%-----------------------
\begin{theorem} \label{thm:frac-max-bmo-morrey}
%-----------------------
 Assume that   $b$ be a locally integrable function  on $\mathbb{Q}_{p}^{n}$,   $0 <\alpha <n$, $1 <r< n/\alpha$ and $0<\lambda <n-\alpha r$.
%-----------------------
\begin{enumerate}[label=(T.\arabic*)]
 %%-------------------------------------
  \item   If  $1/q=1/r-\alpha/(n-\lambda)$.  Then $b\in  \bmo(\mathbb{Q}_{p}^{n})$  if and only if $\mathcal{M}_{\alpha,b}^{p}$  is bounded from  $L^{r,\lambda}(\mathbb{Q}_{p}^{n})$ to $L^{q,\lambda}(\mathbb{Q}_{p}^{n})$.
%-----------
    \label{enumerate:thm-frac-max-bmo-morrey-1}
%--------------------------------
   \item If  $1/q=1/r-\alpha/n$ and $\lambda/r =\mu/q$.  Then $b\in  \bmo(\mathbb{Q}_{p}^{n})$   if and only if $\mathcal{M}_{\alpha,b}^{p}$  is bounded from  $L^{r,\lambda}(\mathbb{Q}_{p}^{n})$ to $L^{q,\mu}(\mathbb{Q}_{p}^{n})$.
%-----------
    \label{enumerate:thm-frac-max-bmo-morrey-2}
%--------------------------------
\end{enumerate}
%--------------------
\end{theorem}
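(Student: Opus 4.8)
The plan is to prove each of the two equivalences by splitting it into its two implications: necessity (boundedness $\Rightarrow b\in\bmo(\mathbb{Q}_{p}^{n})$), obtained by testing $\mathcal{M}_{\alpha,b}^{p}$ on characteristic functions of balls, and sufficiency ($b\in\bmo(\mathbb{Q}_{p}^{n})\Rightarrow$ boundedness), obtained through a near/far decomposition adapted to the ultrametric geometry of $\mathbb{Q}_{p}^{n}$. For necessity, fix a ball $B_{0}=B_{\gamma_{0}}(x_{0})$ and take $f=\dchi_{B_{0}}$. Since any $x\in B_{0}$ satisfies $B_{0}=B_{\gamma_{0}}(x)$ by the ultrametric property, the defining supremum yields
\[
\mathcal{M}_{\alpha,b}^{p}(\dchi_{B_{0}})(x)\ge |B_{0}|_{h}^{\alpha/n}\,\frac{1}{|B_{0}|_{h}}\dint_{B_{0}}|b(x)-b(y)|\,\mathd y\ge |B_{0}|_{h}^{\alpha/n}\,|b(x)-b_{B_{0}}|,\qquad x\in B_{0}.
\]
I would then restrict the target Morrey norm to the block over $B_{0}$, pass from the $L^{q}$ average of $|b-b_{B_{0}}|$ to its $L^{1}$ average by H\"older, and insert the assumed operator bound together with the elementary value of $\|\dchi_{B_{0}}\|_{L^{r,\lambda}(\mathbb{Q}_{p}^{n})}$. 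The exponent relations in \labelcref{enumerate:thm-frac-max-bmo-morrey-1,enumerate:thm-frac-max-bmo-morrey-2} are calibrated precisely so that every power of $|B_{0}|_{h}$ cancels, leaving $\frac{1}{|B_{0}|_{h}}\int_{B_{0}}|b-b_{B_{0}}|\le C$ uniformly in $B_{0}$, i.e. $b\in\bmo(\mathbb{Q}_{p}^{n})$. Because $\mathcal{M}_{\alpha,b}^{p}$ involves only the symmetric quantity $|b(x)-b(y)|$, no auxiliary condition of the form $b^{-}\in L^{\infty}$ is forced, in contrast with \cref{thm:nonlinear-frac-max-bmo-morrey}.

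For sufficiency I would follow the standard Morrey scheme, reducing matters to the Adams-type (for \labelcref{enumerate:thm-frac-max-bmo-morrey-1}) and Spanne-type (for \labelcref{enumerate:thm-frac-max-bmo-morrey-2}) boundedness of the $p$-adic fractional maximal operator, while carrying the $\bmo$ symbol through a near/far decomposition. Fixing $B_{0}=B_{\gamma_{0}}(x_{0})$ I split $f=f_{1}+f_{2}$ with $f_{1}=f\dchi_{B_{0}}$ and $f_{2}=f\dchi_{\mathbb{Q}_{p}^{n}\setminus B_{0}}$. The near piece $\mathcal{M}_{\alpha,b}^{p}f_{1}$ is controlled by the constant-exponent Lebesgue estimate already available from \cref{cor:frac-max-bmo} (implication \labelcref{enumerate:cor-frac-max-bmo-1}$\Rightarrow$\labelcref{enumerate:cor-frac-max-bmo-2}), which internally absorbs the oscillation of $b$ via John--Nirenberg; in the Spanne regime the relation $\lambda/r=\mu/q$ makes the resulting power of $|B_{0}|_{h}$ cancel, whereas in the Adams regime the stronger integrability gain $\alpha/(n-\lambda)$ is extracted by the $p$-adic Hedberg inequality, dominating the relevant sublinear average by $(\mathcal{M}^{p}f(x))^{1-\theta}\|f\|_{L^{r,\lambda}(\mathbb{Q}_{p}^{n})}^{\theta}$ with the appropriate $\theta$.

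The far piece carries the genuinely new work. For $x\in B_{0}$ and $y\notin B_{0}$ the ultrametric inequality forces the smallest defining ball to be some $B_{\gamma}(x)$ with $\gamma>\gamma_{0}$, whence
\[
\mathcal{M}_{\alpha,b}^{p}f_{2}(x)=\sup_{\gamma>\gamma_{0}}|B_{\gamma}(x)|_{h}^{\alpha/n}\,\frac{1}{|B_{\gamma}(x)|_{h}}\dint_{B_{\gamma}(x)\setminus B_{0}}|b(x)-b(y)|\,|f(y)|\,\mathd y .
\]
I would estimate $|b(x)-b(y)|\le|b(x)-b_{B_{\gamma}(x)}|+|b(y)-b_{B_{\gamma}(x)}|$, bound the first factor through the telescoping inequality $|b_{B_{0}}-b_{B_{\gamma}(x)}|\lesssim(\gamma-\gamma_{0})\,\|b\|_{\bmo(\mathbb{Q}_{p}^{n})}$ along the nested chain $B_{\gamma_{0}}(x)\subset\cdots\subset B_{\gamma}(x)$ (each consecutive ratio of Haar measures being $p^{n}$), and bound the averages of $|f|$ by $\|f\|_{L^{r,\lambda}(\mathbb{Q}_{p}^{n})}$ via H\"older.

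The hard part is the ensuing summation over $\gamma>\gamma_{0}$: the $\bmo$ oscillation contributes a factor growing linearly in $\gamma-\gamma_{0}$, and one must verify that the $p$-power decay produced jointly by the fractional exponent and the Morrey integrability, namely $p^{-\delta(\gamma-\gamma_{0})}$ with $\delta>0$, dominates it, so that $\sum_{\gamma>\gamma_{0}}(\gamma-\gamma_{0})\,p^{-\delta(\gamma-\gamma_{0})}<\infty$. It is exactly here that the two regimes separate, and that $\delta>0$ must be read off from the standing hypotheses $1<r<n/\alpha$ and $0<\lambda<n-\alpha r$ together with the respective exponent relations. I would stress that no universal pointwise majorant $\mathcal{M}_{\alpha,b}^{p}f\lesssim\|b\|_{\bmo(\mathbb{Q}_{p}^{n})}\,(\mathcal{M}_{\alpha s}^{p}(|f|^{s}))^{1/s}$ is available, since the $|b(x)-b_{B_{\gamma}(x)}|$ term cannot be absorbed at a single point; the integrability of $f$ in the far region must genuinely be exploited, which is precisely why the near/far decomposition, rather than a single maximal-function bound, is the correct mechanism. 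Taking the supremum over $B_{0}$ of the combined near and far estimates then yields the asserted Morrey boundedness.
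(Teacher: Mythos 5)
Your necessity argument is essentially the paper's. Both test the operator on $\dchi_{B_{0}}$, use the ultrametric identity $B_{\gamma_{0}}(x)=B_{0}$ for $x\in B_{0}$ to get the pointwise lower bound $\mathcal{M}_{\alpha,b}^{p}(\dchi_{B_{0}})(x)\ge|B_{0}|_{h}^{\alpha/n}\,|b(x)-b_{B_{0}}|$ (this is exactly \eqref{inequ:pf-3-4-frac-max} read in reverse), and both then combine the assumed Morrey bound with $\|\dchi_{B_{0}}\|_{L^{r,\lambda}(\mathbb{Q}_{p}^{n})}\le|B_{0}|_{h}^{(1-\lambda/n)/r}$; the paper phrases the conclusion as verifying condition \labelcref{enumerate:cor-frac-max-bmo-4} of \cref{cor:frac-max-bmo} while you conclude via H\"{o}lder, but the exponent bookkeeping is identical and, as you say, closes in both regimes.

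Your sufficiency argument is a genuinely different and much heavier route than the paper's, and the justification you give for choosing it is wrong. The paper never decomposes $f$: it applies the pointwise estimate of \cref{lem:frac-max-pointwise-estimate},
\begin{align*}
\mathcal{M}_{\alpha,b}^{p}(f)(x)\le C\|b\|_{p,*}\Big(\mathcal{M}^{p}\big(\mathcal{M}_{\alpha}^{p}(f)\big)(x)+\mathcal{M}_{\alpha}^{p}\big(\mathcal{M}^{p}(f)\big)(x)\Big),
\end{align*}
valid almost everywhere for every $f\in L_{\loc}^{1}(\mathbb{Q}_{p}^{n})$, and then quotes \cref{lem:lem10-he2022characterization} (Adams- and Spanne-type Morrey bounds for $\mathcal{M}_{\alpha}^{p}$) and \cref{lem:max-function-bound-morrey} (Morrey bound for $\mathcal{M}^{p}$) for the two compositions. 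So your emphasized claim that ``no universal pointwise majorant is available'' and that the near/far decomposition is ``the correct mechanism'' is refuted by the paper itself: the oscillation $|b(x)-b_{B_{\gamma}(x)}|$ need not be absorbed ``at a single point''; it is absorbed into the composition $\mathcal{M}^{p}\big(\mathcal{M}_{\alpha}^{p}(f)\big)$, which is precisely what your far-piece telescoping re-proves by hand. Your decomposition does work: the far-piece exponent $\alpha/n-(n-\lambda)/(rn)$ is negative exactly because $\lambda<n-\alpha r$, so the geometric decay beats the linear factor $\gamma-\gamma_{0}$, and the leftover term $|b(x)-b_{B_{0}}|$ (which your sketch leaves implicit after the telescoping step) is handled by John--Nirenberg, i.e.\ \cref{lem:cor-5.17-kim2009carleson}. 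One step, however, is under-justified: in the Adams case \labelcref{enumerate:thm-frac-max-bmo-morrey-1} you invoke a ``$p$-adic Hedberg inequality'' for the near piece, but Hedberg's trick does not transfer verbatim to $\mathcal{M}_{\alpha,b}^{p}$, since the small-radius averages carry the factor $|b(x)-b(y)|$ and are controlled by $\mathcal{M}_{b}^{p}(f)(x)$ rather than $\mathcal{M}^{p}(f)(x)$. The simple repair avoids Hedberg altogether: with $1/q_{1}=1/r-\alpha/n$ one has $q<q_{1}$, so apply the global $L^{r}\to L^{q_{1}}$ bound of \cref{cor:frac-max-bmo} to $f\dchi_{B_{0}}$ and then H\"{o}lder on $B_{0}$; the resulting factor $|B_{0}|_{h}^{1/q-1/q_{1}}=|B_{0}|_{h}^{-\alpha\lambda/(n(n-\lambda))}$ is exactly what the Adams exponent requires.
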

%------------

%--------------------
\begin{remark}   \label{rem.frac-max-bmo-morrey}
%-------
\begin{enumerate}[ label=(\roman*)]
%-------
\item   For the case $\alpha=0$,  \cref{thm:nonlinear-frac-max-bmo-morrey} and \cref{thm:frac-max-bmo-morrey}  are also true.
%-------
\item    \cref{thm:nonlinear-frac-max-bmo-morrey} \labelcref{enumerate:thm-nonlinear-frac-max-bmo-morrey-2} was proved in \cite[Theorem 1.6]{he2023necessary} (for $\alpha=0$), and \cref{thm:frac-max-bmo-morrey} \labelcref{enumerate:thm-frac-max-bmo-morrey-2} was proved in \cite[Theorem 1.5]{he2023necessary} (for $\alpha=0$).
%-------
\end{enumerate}
%-------------
%-------
\end{remark}
%-------------

 Throughout this paper, the letter $C$  always stands for a constant  independent of the main parameters involved and whose value may differ from line to line.
In addition, here and hereafter $|E|_{h}$  will always denote the Haar measure of a measurable set $E$ on $\mathbb{Q}_{p}^{n}$ and by  \raisebox{2pt}{$\dchi_{E}$} denotes the  characteristic function of a measurable set $E\subset\mathbb{Q}_{p}^{n}$.
%Let $L^{p} ~(1\le p\le \infty)$  be the standard $L^{p} $-space with respect to the Haar measure $\mathd x$.
%For a measurable set $E \subset\mathbb{Q}_{p}^{n}$ and a positive integer $m$, we will use the notation $(E)^{m}=\underbrace{E\times \cdots \times E}_{m}$ sometimes. And we will occasionally use the notational $\vec{f}=(f_{1},\dots , f_{m})$, $T(\vec{f})=T(f_{1},\dots , f_{m})$, $\mathd\vec{y}=dy_{1}\cdots  dy_{m}$ and $(x,\vec{y})=(x,y_{1},\dots , y_{m})$ for convenience.

%This paper is organized as follows. In the next section, we recall some basic definitions and known results. In  \cref{sec:proof-mab}, we will prove  \cref{thm:lipschitz-frac-main-1}.   \cref{sec:proof-nonlinear}  is devoted to proving \cref{thm:lipschitz-nonlinear-frac-main-1}.

\section{Preliminaries and lemmas}
\label{sec:preliminary}

In order to prove the  principal  results of this paper, we firstly review some necessary  concepts and remarks.

\subsection{$p$-adic field $\mathbb{Q}_{p}$}

%Firstly, we introduce some basic and necessary notations for the $p$-adic field.

Set $p \ge 2$ be a fixed prime number in $\mathbb{Z}$ and $G_{p}=\{0,1,\ldots,p-1\}$.
For every non-zero rational number $x$, by the unique factorization theorem, there is a unique $\gamma=\gamma(x)\in \mathbb{Z}$, such that $x=p^{\gamma} \frac{m}{n}$, where $m,n\in \mathbb{Z}$  are not divisible by $p$ (i.e. $p$ is coprime to $m$, $n$).
Define the mapping $|\cdot|_{p}: \mathbb{Q} \to \mathbb{R}^{+}$  as follows:
%-------------------
\begin{align*} %\label{equ:1.1-hussain2021boundedness} %\cite{hussain2021boundedness}
%-------------
|x|_{p}=
%-------------
\begin{cases}
%-------------
p^{-\gamma} & \text{if} \ x \neq 0,
%-------------
\\
%-------------
0 & \text{if} \ x = 0.
%-------------
\end{cases}
%-------------
\end{align*}
%-------------
 The $p$-adic absolute value $|\cdot|_{p}$ is endowed with many properties of the usual real norm $|\cdot|$ with an
additional non-Archimedean property (i.e., $\{|m|_{p}, m\in \mathbb{Z}\}$ is bounded)
$$|x+y|_{p} \le \max \{ |x|_{p},|y|_{p}\}.$$
In addition,  $|\cdot|_{p}$  also satisfies the following properties:
%-----------------------
\begin{enumerate}[label=(\arabic*)]
%%-------------------------------------
\item  (positive definiteness) $|x|_{p} \ge 0$. Specially,  $|x|_{p}=0 \Leftrightarrow  ~x = 0$
%--------------------
\item (multiplicativity) $|xy|_{p} = |x|_{p} |y|_{p} $.
%--------------------
\item (non-Archimedean triangle inequality)  $|x+y|_{p} \le \max\{ |x|_{p}, |y|_{p}\} $. The equality holds if and only if $|x|_{p} \neq |y|_{p}$.
% Specially, when $|x|_{p} \neq |y|_{p}$,   $|x+y|_{p} =\max\{ |x|_{p}, |y|_{p}\} $.
%-------------------------------
\end{enumerate}
%-------------
 Denote by $\mathbb{Q}_{p}$ the $p$-adic field which is defined as the completion of the field of rational numbers $\mathbb{Q}$ with respect to the   $p$-adic absolute value $|\cdot|_{p}$.

From the standard $p$-adic analysis, any non-zero element $x\in\mathbb{Q}_{p}$ can be  uniquely represented as a canonical series form
%-----------------
\begin{align*}
%-------
 x=p^{\gamma}(a_{0}+a_{1}p+a_{2}p^{2}+\cdots)  =p^{\gamma}   \sum_{j=0}^{\infty} a_{j}p^{j},
%----------
\end{align*}
%------------
where $a_{j}\in G_{p}$ and $a_{0}\neq 0$, and  $\gamma =\gamma(x)\in \mathbb{Z}$ is called as the $p$-adic valuation of $x$. The series converges in the $p$-adic  absolute value since the inequality $| a_{j}p^{j}|_{p} \le  p^{-j}$  holds for all   $j\in \mathbb{N}$.
%The $p$-adic absolute value ensures the convergence of series \labelcref{equ:1.2-hussain2021boundedness} in $\mathbb{Q}_{p}$, because the inequality $|p^{\gamma} \beta_{i}p^{i} |_{p} \le p^{-\gamma-i} $ holds for all $\gamma \in \mathbb{Z}$ and $i \in \mathbb{N}$

Moreover, the $n$-dimensional  $p$-adic  vector space $\mathbb{Q}_{p}^{n} =\mathbb{Q}_{p}\times\cdots\times \mathbb{Q}_{p}~(n\ge 1)$, consists of all points $x= (x_{1},\ldots,x_{n})$, where $x_{i} \in \mathbb{Q}_{p} ~(i=1,\ldots,n)$,  equipped with the following absolute value
%-----------------
\begin{align*}
%-------
 |x|_{p}= \max_{1\le j\le n} |x_{j}|_{p}.
%----------
\end{align*}
%------------
For $\gamma\in \mathbb{Z}$ and $a= (a_{1},a_{2},\dots,a_{n})\in \mathbb{Q}_{p}^{n}$, we denote by
%-------------
\begin{align*}
%------------------
  B_{\gamma} (a)&=    \{ x\in \mathbb{Q}_{p}^{n}: |x-a|_{p}\le p^{\gamma}\}
  % \colorbox{red!20}{$=\bigcup_{k\le \gamma} S_{k} (a)$????}
%--------------------
\end{align*}
%-------------
the closed ball with the center at $a$ and radius $p^{\gamma}$ and by
%-------------
\begin{align*}
%------------------
  S_{\gamma} (a)&=    \{ x\in  \mathbb{Q}_{p}^{n}: |x-a|_{p}= p^{\gamma}\}  =B_{\gamma}(a)\setminus B_{\gamma-1}(a)
  %\colorbox{green!20}{$=B_{\gamma}(a)\setminus B_{\gamma-1}(a)$????}
%--------------------
\end{align*}
%-------------
the corresponding sphere.
For $a=0$, we write $B_{\gamma} (0) = B_{\gamma} $, and $S_{\gamma} (0) = S_{\gamma} $.
Note that $B_{\gamma} (a) =\bigcup\limits_{k\le \gamma} S_{k} (a)$ % \cite{mo2019padic,mo2018commutator}
and $\mathbb{Q}_{p}^{n}\setminus \{0\}= \bigcup\limits_{\gamma\in \mathbb{Z}} S_{\gamma} $.
It is easy to see that the equalities
%-------------
\begin{align*}
%------------------
 a_{0} +B_{\gamma} = B_{\gamma} (a_{0})
 \ \text{and} \
 a_{0} +S_{\gamma} = S_{\gamma} (a_{0}) = B_{\gamma} (a_{0}) \setminus B_{\gamma-1} (a_{0})
%--------------------
\end{align*}
%-------------
hold for all $a_{0} \in \mathbb{Q}_{p}^{n}$ and $\gamma\in \mathbb{Z}$.

%It follows from non-Archimedean triangle inequality that two balls $B_{\gamma} (x)$ and $B_{\gamma'} (y)$ either do not intersect or one of
%them is contained in the other, which differ from those of the Euclidean case. And note that in the second case under conditions $\gamma=\gamma'$ these balls are equal.
 The following properties can   be found in \cite{kim2009q} (see Lemma 3.1), it shows that two balls $B_{\gamma} (x)$ and $B_{\gamma'} (y)$ either do not intersect or one of them is contained in the other, which differ from those of the Euclidean case.
%--------------
\begin{lemma}\label{lem:lem-3.1-kim2009q}
%-------
 Let $\gamma, \gamma'\in \mathbb{Z}$,  $x,y\in  \mathbb{Q}_{p}^{n}$.  The family  $\mathcal{B}_{p}=\{B_{\gamma} (x): \gamma, \in \mathbb{Z}, x \in  \mathbb{Q}_{p}^{n}\}$   consisting of all   $p$-adic balls has the following properties:
%-------
\begin{enumerate}[label=(\arabic*)] %fullwidth,
%------------
\item If   $\gamma\le\gamma'$, then  either $B_{\gamma} (x) \cap B_{\gamma'} (y)=\emptyset$ or $B_{\gamma} (x) \subset B_{\gamma'} (y)$
%-----------
\item $B_{\gamma} (x)= B_{\gamma} (y)$   if and only if  $y\in B_{\gamma} (x)$.
%----------
\end{enumerate}
%------------
\end{lemma}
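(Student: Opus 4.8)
The plan is to derive both properties directly from the non-Archimedean structure of $\mathbb{Q}_{p}^{n}$, the only genuinely $p$-adic ingredient being the ultrametric inequality. First I would record that the max-norm $|x|_{p}=\max_{1\le j\le n}|x_{j}|_{p}$ on $\mathbb{Q}_{p}^{n}$ inherits the strong triangle inequality $|x+y|_{p}\le\max\{|x|_{p},|y|_{p}\}$ from the one-dimensional valuation, since a coordinatewise maximum of estimates each of the form $|x_{j}+y_{j}|_{p}\le\max\{|x_{j}|_{p},|y_{j}|_{p}\}$ is again bounded by $\max\{|x|_{p},|y|_{p}\}$. This single inequality, applied to regrouped differences such as $z-y=(z-x)+(x-y)$, is precisely what forces any two balls to be either nested or disjoint, and it is the whole engine of the proof.

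I would prove part (2) first, since it is the cleaner ``every point is a center'' statement and is reused in part (1). For the forward implication, if $B_{\gamma}(x)=B_{\gamma}(y)$ then $y\in B_{\gamma}(y)=B_{\gamma}(x)$ trivially, because $|y-y|_{p}=0\le p^{\gamma}$. For the converse, assume $y\in B_{\gamma}(x)$, i.e.\ $|x-y|_{p}\le p^{\gamma}$. Given any $z\in B_{\gamma}(x)$, I write $z-y=(z-x)+(x-y)$ and apply the ultrametric inequality to obtain $|z-y|_{p}\le\max\{|z-x|_{p},|x-y|_{p}\}\le p^{\gamma}$, so $z\in B_{\gamma}(y)$; the reverse inclusion follows by the identical computation with the roles of $x$ and $y$ interchanged, using that the hypothesis $|x-y|_{p}\le p^{\gamma}$ is symmetric. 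Hence $B_{\gamma}(x)=B_{\gamma}(y)$.

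For part (1), I would assume $\gamma\le\gamma'$ together with $B_{\gamma}(x)\cap B_{\gamma'}(y)\neq\emptyset$ and show the inclusion $B_{\gamma}(x)\subset B_{\gamma'}(y)$. Fix a common point $w$, so that $|w-x|_{p}\le p^{\gamma}$ and $|w-y|_{p}\le p^{\gamma'}$. For an arbitrary $z\in B_{\gamma}(x)$, decompose $z-y=(z-x)+(x-w)+(w-y)$ and again invoke the ultrametric inequality: the terms $|z-x|_{p}$ and $|x-w|_{p}$ are both at most $p^{\gamma}\le p^{\gamma'}$, while $|w-y|_{p}\le p^{\gamma'}$, so $|z-y|_{p}\le p^{\gamma'}$ and thus $z\in B_{\gamma'}(y)$. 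This yields the dichotomy exactly as stated.

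There is essentially no serious obstacle here: the entire argument rests on the one algebraic feature distinguishing $\mathbb{Q}_{p}$ from $\mathbb{R}$, namely that the triangle inequality strengthens to a maximum, which eliminates the accumulation of error terms that would otherwise prevent exact nesting. The only point deserving any care is the reduction from the one-dimensional valuation to the max-norm on $\mathbb{Q}_{p}^{n}$; once that strengthened inequality is in hand, both parts reduce to a single regrouping of differences followed by taking maxima.
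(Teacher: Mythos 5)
Your proof is correct and complete. Note that the paper itself offers no proof of this lemma at all: it is stated with a citation to Lemma~3.1 of Kim's paper \cite{kim2009q}, so there is no in-paper argument to compare against; your write-up supplies the standard self-contained ultrametric argument, which is surely the same one as in the cited source. Both parts check out: the reduction of the strong triangle inequality to the max-norm on $\mathbb{Q}_p^n$ is handled properly, part~(2) follows from the single regrouping $z-y=(z-x)+(x-y)$ together with the symmetry of the hypothesis $|x-y|_p\le p^\gamma$, and part~(1) from the three-term decomposition $z-y=(z-x)+(x-w)+(w-y)$ with $\gamma\le\gamma'$. One cosmetic remark: you announce that part~(2) is ``reused in part~(1)'' but then never actually invoke it; if you wanted to, you could shorten part~(1) by noting that $w\in B_\gamma(x)$ gives $B_\gamma(x)=B_\gamma(w)$ and $w\in B_{\gamma'}(y)$ gives $B_{\gamma'}(y)=B_{\gamma'}(w)$ by part~(2), whence $B_\gamma(x)=B_\gamma(w)\subset B_{\gamma'}(w)=B_{\gamma'}(y)$ immediately --- but your direct decomposition is equally valid.
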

%------------

Since $\mathbb{Q}_{p}^{n}$ is a locally compact commutative group with respect to addition, there exists a unique
Haar measure $\mathd x$  on $\mathbb{Q}_{p}^{n}$ (up to positive constant multiple) which is translation invariant (i.e., $\mathd (x+a )= \mathd x$), such that
%-------------
\begin{align*}
%------------------
  \dint_{B_{0}} \mathd x &=  |B_{0}|_{h} =1,
%--------------------
\end{align*}
%-------------
where $|E|_{h}$ denotes the Haar measure of measurable subset $E $ of $\mathbb{Q}_{p}^{n}$. Furthermore, from this integral theory, it is
easy to obtain that %a simple calculation shows that
%-------------
\begin{align*}    % \cite{taibleson1975fourier} P.8
%--------------
  \dint_{B_{\gamma}(a)} \mathd x &=|B_{\gamma}(a)|_{h}      = p^{n\gamma}
%------------
\\ \intertext{and}
%------------
 \dint_{S_{\gamma}(a)} \mathd x &= |S_{\gamma}(a)|_{h}     =  p^{n\gamma}(1-p^{-n})= |B_{\gamma}(a)|_{h} -|B_{\gamma-1}(a)|_{h}      \notag
%--------------------
\end{align*}
%-------------
hold for all $a \in \mathbb{Q}_{p}^{n}$ and $\gamma\in \mathbb{Z}$.

For more information about the p-adic field, we refer readers to \cite{taibleson1975fourier,vladimirov1994padic}.

%------------------------
\subsection{$p$-adic variable Lebesgue spaces}  %function spaces
%------------------------

In what follows, we say that a  real-valued  measurable
function $f$ defined on $\mathbb{Q}_{p}^{n}$ is in $L^{q}(\mathbb{Q}_{p}^{n})$, $1\le q\le \infty$, if it satisfies
%-----------------
\begin{align} \label{equ:4-he2022characterization}
%-------
\|f\|_{L^{q}(\mathbb{Q}_{p}^{n})} &=  \Big(\dint_{\mathbb{Q}_{p}^{n}}  |f(x)|^{q} \mathd x \Big)^{1/q} <\infty,\ \ 1\le q<\infty
%------------
\end{align}
%--------
and denote by $L^{\infty}(\mathbb{Q}_{p}^{n})$   the set of all measurable real-valued functions  $f$ on $\mathbb{Q}_{p}^{n}$ satisfying
%--------
\begin{align*}
%------------
  \|f\|_{L^{\infty}(\mathbb{Q}_{p}^{n})}  &=\esssup_{x\in \mathbb{Q}_{p}^{n}} |f(x)|
%  = \inf\Big\{ \alpha: \big|\{x\in\mathbb{Q}_{p}^{n}:~|  f(x)| >\alpha \}\big|_{h} =0\Big\}
<\infty.
%------------
\end{align*}
%------------
Here, %when $q=1$,
the integral in equation \labelcref{equ:4-he2022characterization} is defined as follows:
%-----------------
\begin{align*} % \label{equ:1.3-kim2009carleson}
\begin{aligned}
%-------
   \dint_{\mathbb{Q}_{p}^{n}}  |f(x)|^{q} \mathd x  &=\lim_{\gamma\to\infty} \dint_{B_{\gamma}(0)}  |f(x)|^{q} \mathd x
%------------
   =\lim_{\gamma\to\infty} \sum_{-\infty<k\le \gamma} \dint_{S_{k}(0)}  |f(x)|^{q} \mathd x,
%------------
\end{aligned}
\end{align*}
%------------
if the limit exists.

 Some often used computational principles are worth noting. In particular,  if $f\in  L^{1}(\mathbb{Q}_{p}^{n}) $, then
%------------
\begin{align*}
%------------
   \dint_{\mathbb{Q}_{p}^{n}}  f(x)  \mathd x  &= \sum_{\gamma=-\infty}^{+\infty} \dint_{S_{\gamma}}  f(x)  \mathd x
%------------
\\ \intertext{and}
%------------
 \dint_{\mathbb{Q}_{p}^{n}} f(tx)  \mathd x  &= \frac{1}{|t|_{p}^{n}} \dint_{\mathbb{Q}_{p}^{n}} f(x) \mathd x,
%------------
\end{align*}
%------------
where $t\in \mathbb{Q}_{p}\setminus\{0\}$, $tx= (tx_{1},\ldots,tx_{n})$ and $\mathd (tx)=|t|_{p}^{n} \mathd x$.
%change of variables

We now introduce the notion of $p$-adic variable exponent Lebesgue spaces and give some properties needed in
the sequel (see \cite{chacon2020variable} for the respective proofs).

We say that a measurable function $q(\cdot)$ is a variable exponent if $q(\cdot): \mathbb{Q}_{p}^{n}\to (0,\infty)$.

%--------------
\begin{definition} \label{def.variable-exponent}
%-----------------
  Given a measurable function $q(\cdot)$ defined on $\mathbb{Q}_{p}^{n}$,   we denote by
$$
q_{-} :=\essinf_{x\in \mathbb{Q}_{p}^{n}} q(x),\ \
q_{+}:= \esssup_{x\in \mathbb{Q}_{p}^{n}} q(x).$$
%----------
\begin{enumerate}[label=(\arabic*)] %fullwidth,
%------------ see P14 \cite{cruz2013variable}
\item $q'_{-}=\essinf\limits_{x\in \mathbb{Q}_{p}^{n}} q'(x)=\frac{q_{+}}{q_{+}-1},\ \ q'_{+}= \esssup\limits_{x\in \mathbb{Q}_{p}^{n}} q'(x)=\frac{q_{-}}{q_{-}-1}.$
%----------
% \item  Denote by $\mathscr{P}_{0}(\mathbb{Q}_{p}^{n})$ the set of all measurable functions $ q(\cdot): \mathbb{Q}_{p}^{n}\to(0,\infty)$ such that
%$$0< q_{-}\le q(x) \le q_{+}<\infty,\ \ x\in \mathbb{Q}_{p}^{n}.$$
%%------------
%\item  Denote by $\mathscr{P}_{1}(\mathbb{Q}_{p}^{n})$ the set of all measurable functions $ q(\cdotp): \mathbb{Q}_{p}^{n}\to[1,\infty)$ such that
%$$1\le q_{-}\le q(x) \le q_{+}<\infty,\ \ x\in \mathbb{Q}_{p}^{n}.$$
%-----------
  \item Denote by $\mathscr{P}(\mathbb{Q}_{p}^{n})$ the set of all measurable functions $ q(\cdot): \mathbb{Q}_{p}^{n}\to(1,\infty)$ such that
$$1< q_{-}\le q(x) \le q_{+}<\infty,\ \ x\in \mathbb{Q}_{p}^{n}.$$
%----------
 \item  The set $\mathscr{B}(\mathbb{Q}_{p}^{n})$ consists of all  measurable functions  $q(\cdot)\in\mathscr{P}(\mathbb{Q}_{p}^{n})$ satisfying that the Hardy-Littlewood maximal operator $\mathcal{M}^{p}$ is bounded on $L^{q(\cdot)}(\mathbb{Q}_{p}^{n})$.
%-------
\end{enumerate}
%--------------
\end{definition}
%------------

%--------------
\begin{definition}[$p$-adic variable exponent Lebesgue spaces] \label{def.p-adic-lebesgue-space}
%-----------------
 Let   $q(\cdot) \in \mathscr{P}(\mathbb{Q}_{p}^{n})$.
%----------
Define the $p$-adic variable exponent Lebesgue spaces $L^{q(\cdot)}(\mathbb{Q}_{p}^{n})$ as follows
%----------------
\begin{align*}
%-------
  L^{q(\cdot)}(\mathbb{Q}_{p}^{n})=\{f~ \text{is measurable function}: \int_{\mathbb{Q}_{p}^{n}} \Big( \frac{|f(x)|}{\eta} \Big)^{q(x)} \mathrm{d}x<\infty ~\text{for some constant}~ \eta>0\}.
%----------
\end{align*}
%------------
 The Lebesgue space $L^{q(\cdot)}(\mathbb{Q}_{p}^{n})$ is a Banach function space with respect to the Luxemburg norm
 \begin{equation*}
   \|f\|_{L^{q(\cdot)}(\mathbb{Q}_{p}^{n})}=\inf \Big\{ \eta>0:   \int_{\mathbb{Q}_{p}^{n}} \Big( \frac{|f(x)|}{\eta} \Big)^{q(x)} \mathrm{d}x \le 1 \Big\}.
\end{equation*}
%----------
\end{definition}
%------------

%--------------
\begin{definition}[$\log$-H\"{o}lder continuity\cite{chacon2020variable}] \label{def.4.1-4.4-log-holder}
%-----------------
 Let measurable function $q(\cdot) \in \mathscr{P}(\mathbb{Q}_{p}^{n})$.
%----------
\begin{enumerate}[label=(\arabic*)] %fullwidth,
%------------
\item Denote by  $\mathscr{C}_{0}^{\log}(\mathbb{Q}_{p}^{n})$ the set of all %local  $\log$-H\"{o}lder continuous  functions
    $q(\cdotp)$ which satisfies
 \begin{equation*}    %\label{equ.2.1}
  \gamma\Big(  q_{-}(B_{\gamma}(x)) -  q_{+}(B_{\gamma}(x)) \Big)  \le C
\end{equation*}
for all $\gamma\in \mathbb{Z}$ and any $x\in\mathbb{Q}_{p}^{n}$, where $C$ denotes a universal positive constant.
%----------
 \item  The set $\mathscr{C}_{\infty}^{\log}(\mathbb{Q}_{p}^{n})$ consists of all % $\log$-H\"{o}lder continuous functions
     $ q(\cdot)$ which  satisfies
\begin{equation*}%\label{equ.2.3}
 |q(x)-q(y)| \le \frac{C}{\log_{p}(p+\min\{|x|_{p},|y|_{p}\})}
\end{equation*}
%------------
 for any $x, y\in\mathbb{Q}_{p}^{n}$, where $C$ is a  positive constant.
%-----------
 \item Denote by $\mathscr{C}^{\log}(\mathbb{Q}_{p}^{n}) =\mathscr{C}_{0}^{\log}(\mathbb{Q}_{p}^{n}) \bigcap \mathscr{C}_{\infty}^{\log}(\mathbb{Q}_{p}^{n})$ the set of all global log-H\"{o}lder continuous functions $q(\cdotp)$.
%-------
% \item    Denote by $\mathscr{P}^{\log}(\mathbb{Q}_{p}^{n}) =\mathscr{C}(\mathbb{Q}_{p}^{n}) \bigcap \mathscr{P}(\mathbb{Q}_{p}^{n})$.
%-------
\end{enumerate}
%--------------
\end{definition}
%------------

In what follows, we denote   $ \mathscr{C}^{\log}(\mathbb{Q}_{p}^{n}) \bigcap \mathscr{P}(\mathbb{Q}_{p}^{n})$ by  $\mathscr{P}^{\log}(\mathbb{Q}_{p}^{n})  $.
And  for a function $b$ defined on $\mathbb{Q}_{p}^{n}$, we denote
%-----------
\begin{align*}
%-------
  b^{-}(x) :=- \min\{b, 0\} =
%-------
\begin{cases}
%------------
 0,  & \text{if}\ b(x) \ge 0  \\
 |b(x)|, & \text{if}\ b(x) < 0
%------------
\end{cases}
%----------
\end{align*}
%------------
and  $b^{+}(x) =|b(x)|-b^{-}(x)$. Obviously, $b(x)=b^{+}(x)-b^{-}(x)$.

%The first part of \cref{lem.thm-5.2-variable-max-bounded} may be found in \cite{chacon2020variable} (see Theorem 5.2).  By elementary calculations, the second of \cref{lem.thm-5.2-variable-max-bounded} can  be obtained as well.
The following properties are  useful  (or see \cite[Lemma 2.2]{wu2023characterizationC}).

\begin{lemma}  \label{lem.thm-5.2-variable-max-bounded}
Let $q(\cdot)\in \mathscr{P}(\mathbb{Q}_{p}^{n} )$.
%----------
\begin{enumerate}[label=(\arabic*)] %fullwidth,
%------------
\item  (\cite[Theorem 5.2]{chacon2020variable}) If $q(\cdot)\in \mathscr{C}^{\log}(\mathbb{Q}_{p}^{n})$,  then  $ q(\cdot)\in \mathscr{B}(\mathbb{Q}_{p}^{n})$.
%----------
 \item  The  following conditions are equivalent: %\color{red}
%------------
  \begin{enumerate}[label=(\roman*),align=left]
%-----------
  \item  $ q(\cdot)\in \mathscr{B}(\mathbb{Q}_{p}^{n})$,
 \item   $q'(\cdot)\in \mathscr{B}(\mathbb{Q}_{p}^{n})$,
  \item    $ q(\cdot)/q_{0}\in \mathscr{B}(\mathbb{Q}_{p}^{n})$ for some $1<q_{0}<q_{-}$,
 \item    $ (q(\cdot)/q_{0})'\in \mathscr{B}(\mathbb{Q}_{p}^{n})$ for some $1<q_{0}<q_{-}$,
%-----------
\end{enumerate}
%----------
where $r'$ stand for the conjugate exponent of $r$, viz., $1=\frac{1}{r(\cdot)} + \frac{1}{r'(\cdot)}$.
%----------
\end{enumerate}
%--------------
\end{lemma}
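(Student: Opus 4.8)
The plan is to treat the two assertions separately. The first is precisely the $p$-adic Hardy--Littlewood maximal theorem, so I would quote \cite[Theorem 5.2]{chacon2020variable} verbatim and spend the real work on the chain of equivalences in the second part. Throughout I would lean on \cref{lem:lem-3.1-kim2009q}: the $p$-adic balls form an ultrametric nested family, so any two are either disjoint or one contains the other. This nesting replaces the Calderón--Zygmund covering and dyadic machinery of the Euclidean theory and lets Diening's arguments pass to $\mathbb{Q}_{p}^{n}$ with essentially no geometric loss. The logical skeleton I aim for is $(i)\Leftrightarrow(ii)$, together with $(iii)\Rightarrow(i)$, $(i)\Rightarrow(iii)$, and $(iii)\Leftrightarrow(iv)$, which closes the loop among all four conditions.

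First I would dispatch the soft implication $(iii)\Rightarrow(i)$. The key is the elementary pointwise estimate $\mathcal{M}^{p}f \le \big(\mathcal{M}^{p}(|f|^{q_{0}})\big)^{1/q_{0}}$ for $q_{0}\ge 1$, which is just Hölder's inequality inside each averaging integral. Setting $v(\cdot)=q(\cdot)/q_{0}$, which lies in $\mathscr{P}(\mathbb{Q}_{p}^{n})$ precisely because $1<q_{0}<q_{-}$ forces $v_{-}=q_{-}/q_{0}>1$ and $v_{+}=q_{+}/q_{0}<\infty$, I would use the homogeneity identity $\|g^{q_{0}}\|_{L^{v(\cdot)}(\mathbb{Q}_{p}^{n})}=\|g\|_{L^{q(\cdot)}(\mathbb{Q}_{p}^{n})}^{q_{0}}$ of the Luxemburg norm. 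Combining these two facts with the assumed boundedness of $\mathcal{M}^{p}$ on $L^{v(\cdot)}=L^{q(\cdot)/q_{0}}$ converts it directly into boundedness on $L^{q(\cdot)}$; this direction is entirely routine.

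The substance lies in the two genuinely Diening-type pieces. For $(i)\Leftrightarrow(ii)$ I would prove the duality statement that $\mathcal{M}^{p}$ is bounded on $L^{q(\cdot)}(\mathbb{Q}_{p}^{n})$ iff it is bounded on the associate-space exponent $L^{q'(\cdot)}(\mathbb{Q}_{p}^{n})$; the standard route is through the norm-conjugate (associate space) formula together with the uniform estimate $\|\chi_{B}\|_{L^{q(\cdot)}}\,\|\chi_{B}\|_{L^{q'(\cdot)}}\approx |B|_{h}$ over all $p$-adic balls $B$, which follows from the $\log$-Hölder hypotheses encoded in \cref{def.4.1-4.4-log-holder}. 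For $(i)\Rightarrow(iii)$ I would invoke the self-improving (``left-openness'') property of $\mathscr{B}(\mathbb{Q}_{p}^{n})$: boundedness of $\mathcal{M}^{p}$ on $L^{q(\cdot)}$ forces boundedness on the strictly smaller exponent $L^{q(\cdot)/q_{0}}$ for some $q_{0}>1$. Finally $(iii)\Leftrightarrow(iv)$ is obtained by applying the already-established duality equivalence $(i)\Leftrightarrow(ii)$ to the exponent $q(\cdot)/q_{0}$ in place of $q(\cdot)$.

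The main obstacle will be the self-improvement step $(i)\Rightarrow(iii)$: unlike the scaling-up direction, propagating boundedness to a strictly smaller exponent is exactly the hard half of Diening's theorem and admits no soft argument. I would handle it by transcribing Diening's proof to $\mathbb{Q}_{p}^{n}$, where the ultrametric nesting of \cref{lem:lem-3.1-kim2009q} streamlines the local averaging and makes the covering estimates cleaner than in $\mathbb{R}^{n}$. Since all four conditions concern only the exponent and are insensitive to $\alpha$, an equally legitimate route is simply to cite the parallel $p$-adic statement \cite[Lemma 2.2]{wu2023characterizationC} together with the variable-exponent theory of \cite{chacon2020variable}.
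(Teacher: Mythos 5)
First, note what the paper actually does with this lemma: nothing is proved there. Part (1) is quoted from \cite[Theorem 5.2]{chacon2020variable}, and the lemma as a whole is referred to \cite[Lemma 2.2]{wu2023characterizationC}, so the fallback you offer in your final sentence is precisely the paper's own route and is by itself an adequate discharge of the statement. Within your primary route, the implication (iii)$\Rightarrow$(i) is correct and complete: the pointwise bound $\mathcal{M}^{p}f\le\big(\mathcal{M}^{p}(|f|^{q_{0}})\big)^{1/q_{0}}$ together with the Luxemburg homogeneity $\big\||g|^{q_{0}}\big\|_{L^{q(\cdot)/q_{0}}(\mathbb{Q}_{p}^{n})}=\|g\|_{L^{q(\cdot)}(\mathbb{Q}_{p}^{n})}^{q_{0}}$ does exactly what you claim, and deducing (iii)$\Leftrightarrow$(iv) by applying the duality equivalence to the exponent $q(\cdot)/q_{0}$ is sound logic.

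The genuine gap is the duality step (i)$\Leftrightarrow$(ii), which you treat as soft. It is not. The operator $\mathcal{M}^{p}$ is sublinear, not linear, so its boundedness on $L^{q(\cdot)}(\mathbb{Q}_{p}^{n})$ cannot be transferred to the associate space by any adjoint argument; the norm-conjugate formula plus the uniform ball estimate $\|\chi_{B}\|_{L^{q(\cdot)}(\mathbb{Q}_{p}^{n})}\|\chi_{B}\|_{L^{q'(\cdot)}(\mathbb{Q}_{p}^{n})}\le C|B|_{h}$ is only a necessary, Muckenhoupt-type condition, and in the Euclidean model it is known not to imply boundedness of the maximal operator in general. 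Worse, you derive that estimate from the log-H\"{o}lder hypotheses of \cref{def.4.1-4.4-log-holder}, but part (2) of the lemma assumes only $q(\cdot)\in\mathscr{P}(\mathbb{Q}_{p}^{n})$: the equivalences are statements about the class $\mathscr{B}(\mathbb{Q}_{p}^{n})$ with no exponent regularity available, so that hypothesis cannot be invoked. In Diening's theorem --- of which part (2) is the $p$-adic transcription --- the dual-exponent equivalence is itself one of the hard conclusions: it is proved by replacing $\mathcal{M}^{p}$ with the family of linear, self-adjoint averaging operators over balls (for which duality is legitimate) and then showing that uniform boundedness of those averaging operators is equivalent to boundedness of the maximal operator, a step that consumes the same left-openness machinery you defer in (i)$\Rightarrow$(iii). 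So in your outline both hard halves, (i)$\Leftrightarrow$(ii) and (i)$\Rightarrow$(iii), remain unproved: the first because the proposed soft argument fails, the second because ``transcribing Diening's proof'' is a program rather than a proof (a plausible one, since the nesting property of \cref{lem:lem-3.1-kim2009q} does simplify the covering arguments). If you do not intend to execute that program in full, the correct course is the one the paper takes: cite \cite[Lemma 2.2]{wu2023characterizationC} and \cite[Theorem 5.2]{chacon2020variable}, comparing also Remark 2.13 of \cite{cruz2006theboundedness} for the Euclidean prototype, which the paper itself invokes in \cref{rem.variable-max-bounded}.
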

%-------------

%--------------------
\begin{remark}   \label{rem.variable-max-bounded}
%-------------
If  $ q(\cdot)\in \mathscr{B}(\mathbb{Q}_{p}^{n})$ and $s>1$, then $ s q(\cdot)\in \mathscr{B}(\mathbb{Q}_{p}^{n})$ (for the Euclidean case see Remark 2.13 of \cite{cruz2006theboundedness} for more details).
%Similar results of Euclidean space can be referred to Remark 2.13 of \cite{cruz2006theboundedness} for further details.
%------------
\end{remark}
%----------

 The following results regarding the H\"{o}lder's inequality are required (or see \cite[Lemma 2.3]{wu2023characterizationC}).
%The part \labelcref{enumerate:holder-p-adic}   is known as the  H\"{o}lder's inequality on Lebesgue spaces over  $p$-adic vector space $\mathbb{Q}_{p}^{n}$.
%And similar to the Euclidean case, the part \labelcref{enumerate:holder-p-adic-variable} can be  deduced  by simple calculations (or see Lemma 3.8 in \cite{chacon2020variable}).  % straightforward to show
%--------------------------------
\begin{lemma}[Generalized H\"{o}lder's inequality on  $\mathbb{Q}_{p}^{n}$] \label{lem:holder-inequality-p-adic}
 Let $\mathbb{Q}_{p}^{n}$ be an $n$-dimensional $p$-adic vector space.
 %For any $\gamma \in \mathbb{Z}$,  $x \in  \mathbb{Q}_{p}^{n}$, the $p$-adic ball   $B_{\gamma}(x)\subset  \mathbb{Q}_{p}^{n}$.
%--------------------------
\begin{enumerate}[label=(\roman*)] %\arabicfullwidth,
\setlength{\itemsep}{0pt}
%------------
\item Suppose that  $1\le q \le\infty$ with $\frac{1}{q}+\frac{1}{q'}=1$,   and measurable functions $f\in L^{q}(\mathbb{Q}_{p}^{n})$ and $g\in L^{q'}(\mathbb{Q}_{p}^{n})$.  Then there exists a positive constant $C$ such that
%---------------
\begin{align*}
%-------
   \dint_{\mathbb{Q}_{p}^{n}} |f(x)g(x)|  \mathrm{d}x \le C \|f\|_{L^{q}(\mathbb{Q}_{p}^{n})} \|g\|_{L^{q'}(\mathbb{Q}_{p}^{n})}.
%----------
\end{align*}
%------------
 \vspace{-1em} \label{enumerate:holder-p-adic}
%----------------
\item  Suppose that     $ q_{1}(\cdot), q_{2}(\cdot), r(\cdot) \in \mathscr{P}(\mathbb{Q}_{p}^{n})$ and   $r(\cdot) $ satisfy $\frac{1}{r(\cdot) }=\frac{1}{q_{1}(\cdot)}+ \frac{1}{q_{2}(\cdot)}$ almost everywhere.
Then there exists a positive constant $C$ such that the inequality
%----------------
\begin{align*}
%-------
  \|fg\|_{L^{r(\cdot)}(\mathbb{Q}_{p}^{n})}\le C \|f \|_{L^{q_{1}(\cdot)}(\mathbb{Q}_{p}^{n})}  \|g \|_{L^{q_{2}(\cdot)}(\mathbb{Q}_{p}^{n})}
%----------
\end{align*}
%------------
holds for all  $f \in L^{q_{1}(\cdot)}(\mathbb{Q}_{p}^{n})$ and  $g \in L^{q_{2}(\cdot)}(\mathbb{Q}_{p}^{n})$.
%-----------
  \label{enumerate:holder-p-adic-variable}
%----------
\item  When $r(\cdot)= 1$ in  \labelcref{enumerate:holder-p-adic-variable}  as mentioned above, we have $ q_{1}(\cdot), q_{2}(\cdot)  \in \mathscr{P}(\mathbb{Q}_{p}^{n})$ and   $\frac{1}{q_{1}(\cdot)}+ \frac{1}{q_{2}(\cdot)}=1$ almost everywhere.
Then there exists a positive constant $C$ such that the inequality
%----------------
\begin{align*}
%-------
    \dint_{\mathbb{Q}_{p}^{n}} |f(x)g(x)|  \mathrm{d}x  \le C \|f \|_{L^{q_{1}(\cdot)}(\mathbb{Q}_{p}^{n})}  \|g \|_{L^{q_{2}(\cdot)}(\mathbb{Q}_{p}^{n})}
%----------
\end{align*}
%------------
holds for all  $f \in L^{q_{1}(\cdot)}(\mathbb{Q}_{p}^{n})$ and  $g \in L^{q_{2}(\cdot)}(\mathbb{Q}_{p}^{n})$.
%-----------
    \label{enumerate:holder-p-adic-variable-1}
%----------
\end{enumerate}
%--------------
\end{lemma}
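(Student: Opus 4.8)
The plan is to treat the three assertions in turn, in each case reducing the integral inequality to a pointwise Young-type inequality and then integrating against the Haar measure; the only feature special to $\mathbb{Q}_p^n$ is that integration is carried out with respect to $\mathd x$, but since Young's inequality is a pointwise statement the measure plays a purely passive role, so the estimates transfer verbatim from the Euclidean case. For \labelcref{enumerate:holder-p-adic}, if $q=1$ or $q=\infty$ the bound is immediate from the definition of $\|\cdot\|_{L^{\infty}(\mathbb{Q}_p^n)}$ together with the monotonicity of the integral, so I would assume $1<q<\infty$. After discarding the trivial cases $\|f\|_{L^q}=0$ or $\|g\|_{L^{q'}}=0$ (and noting the inequality is vacuous if either norm is infinite), I would normalize so that $\|f\|_{L^q(\mathbb{Q}_p^n)}=\|g\|_{L^{q'}(\mathbb{Q}_p^n)}=1$ and apply the numerical Young inequality $ab\le a^{q}/q+b^{q'}/q'$ pointwise with $a=|f(x)|$, $b=|g(x)|$. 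Integrating over $\mathbb{Q}_p^n$ and using $\dint_{\mathbb{Q}_p^n}|f|^{q}\,\mathd x=\dint_{\mathbb{Q}_p^n}|g|^{q'}\,\mathd x=1$ gives $\dint|fg|\le 1/q+1/q'=1$, and undoing the normalization yields the claim with $C=1$.

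For the variable-exponent statement \labelcref{enumerate:holder-p-adic-variable}, the main idea is to convert the product $|fg|^{r(x)}$ into a sum of the two relevant modulars by a pointwise Young inequality with a variable exponent. Since $\tfrac{1}{r(x)}=\tfrac{1}{q_1(x)}+\tfrac{1}{q_2(x)}$ and $\tfrac{1}{q_2(x)}>0$, one has $r(x)<q_1(x)$ a.e.; hence $s_1(\cdot):=q_1(\cdot)/r(\cdot)$ and $s_2(\cdot):=q_2(\cdot)/r(\cdot)$ are well-defined measurable exponents with $s_1(x),s_2(x)>1$ and $\tfrac{1}{s_1(x)}+\tfrac{1}{s_2(x)}=1$ a.e. I would first assume $\|f\|_{L^{q_1(\cdot)}}=\|g\|_{L^{q_2(\cdot)}}=1$, so that the unit-ball property of the Luxemburg norm (valid because $q_1,q_2$ are essentially bounded as elements of $\mathscr{P}(\mathbb{Q}_p^n)$; see \cite{chacon2020variable}) gives $\dint|f|^{q_1(x)}\,\mathd x\le 1$ and $\dint|g|^{q_2(x)}\,\mathd x\le1$. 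Applying Young's inequality with the conjugate pair $s_1(x),s_2(x)$ to $|f(x)|^{r(x)}$ and $|g(x)|^{r(x)}$, and using $(|f|^{r(x)})^{s_1(x)}=|f|^{q_1(x)}$, $(|g|^{r(x)})^{s_2(x)}=|g|^{q_2(x)}$, yields
\[
|f(x)|^{r(x)}|g(x)|^{r(x)}\le \frac{|f(x)|^{q_1(x)}}{s_1(x)}+\frac{|g(x)|^{q_2(x)}}{s_2(x)}\le |f(x)|^{q_1(x)}+|g(x)|^{q_2(x)},
\]
whence $\dint_{\mathbb{Q}_p^n}|fg|^{r(x)}\,\mathd x\le 2$. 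Because $r_-:=\essinf_{x\in\mathbb{Q}_p^n}r(x)>1$, choosing $\lambda=2^{1/r_-}\ge 1$ makes $\dint(|fg|/\lambda)^{r(x)}\,\mathd x\le \lambda^{-r_-}\cdot 2\le 1$, so by the definition of the Luxemburg norm $\|fg\|_{L^{r(\cdot)}}\le 2^{1/r_-}$; homogeneity in $f$ and $g$ then removes the normalization and produces \labelcref{enumerate:holder-p-adic-variable} with $C=2^{1/r_-}$.

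Assertion \labelcref{enumerate:holder-p-adic-variable-1} is the endpoint $r(\cdot)\equiv 1$, which lies outside $\mathscr{P}(\mathbb{Q}_p^n)$ and so cannot be obtained by literally specializing \labelcref{enumerate:holder-p-adic-variable}; however the same argument applies verbatim with $s_i(\cdot)=q_i(\cdot)$ directly conjugate, since now $\tfrac{1}{q_1(x)}+\tfrac{1}{q_2(x)}=1$. Under the normalization $\|f\|_{L^{q_1(\cdot)}}=\|g\|_{L^{q_2(\cdot)}}=1$, Young's inequality gives $|fg|\le |f|^{q_1(x)}+|g|^{q_2(x)}$ pointwise, integration gives $\dint|fg|\,\mathd x\le 2$, and homogeneity yields $C=2$. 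The one step requiring genuine care throughout is the passage between the Luxemburg norm and the modular $\rho_{q(\cdot)}(h)=\dint|h|^{q(x)}\,\mathd x$ — in particular the unit-ball property and its boundary cases (the zero function, functions of infinite norm, and the use of $q_+<\infty$ so that $\|h\|_{L^{q(\cdot)}}=1$ forces $\rho_{q(\cdot)}(h)\le 1$). This is the only truly variable-exponent ingredient, and it is where the definition of $L^{q(\cdot)}(\mathbb{Q}_p^n)$ and the boundedness built into $\mathscr{P}(\mathbb{Q}_p^n)$ must be invoked; the remainder is the pointwise Young inequality together with the translation-invariant Haar integral, which behave exactly as in $\mathbb{R}^n$.
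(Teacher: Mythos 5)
Your proof is correct. Note, though, that the paper itself offers no argument for this lemma: it is quoted verbatim from \cite[Lemma 2.3]{wu2023characterizationC}, with part (i) being the classical H\"older inequality for a $\sigma$-finite measure space (here the Haar measure on $\mathbb{Q}_{p}^{n}$) and parts (ii)--(iii) the variable-exponent H\"older inequality as developed in \cite{chacon2020variable}. What you have written is precisely the standard modular argument underlying those citations: normalize, pass from the Luxemburg norm to the modular via the unit-ball property, apply the pointwise Young inequality with the pointwise-conjugate exponents $s_{i}(x)=q_{i}(x)/r(x)$, integrate, and rescale using $r_{-}>1$ to get $C=2^{1/r_{-}}$ (and $C=2$ at the endpoint $r(\cdot)\equiv 1$, which you rightly observe cannot be obtained by formally setting $r(\cdot)=1$ in (ii) since the constant exponent $1$ lies outside $\mathscr{P}(\mathbb{Q}_{p}^{n})$). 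Two small points deserve explicit mention in a polished write-up: first, the unit-ball property in the direction you use it ($\|h\|_{L^{q(\cdot)}}=1$ implies $\rho_{q(\cdot)}(h)\le 1$) follows from left-continuity of $\eta\mapsto\rho_{q(\cdot)}(h/\eta)$ via monotone convergence, and in fact does not require $q_{+}<\infty$; second, in (ii) and (iii) the degenerate cases $\|f\|_{L^{q_{1}(\cdot)}}=0$ or $\|g\|_{L^{q_{2}(\cdot)}}=0$ (forcing $fg=0$ a.e.) and the vacuous infinite-norm cases should be disposed of before normalizing, exactly as you did in (i). Your observation that the Haar measure plays a purely passive role is the correct explanation of why the Euclidean proofs transfer verbatim to $\mathbb{Q}_{p}^{n}$.
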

%-------------

%By elementary calculations we have the following property. It can also  be found in \cite{guliyev2022some}, when  Young function $\Phi(t)=t^{p}$.
The following results for the characteristic function $\dchi_{B_{\gamma}(x)}$ are  required as well (see \cite[Lemma 2.4]{wu2023characterizationC}).
%By elementary calculations, the first part may be obtain  from  the $p$-adic integral theory (or refer to  \labelcref{equ:p-adic-integral-theory}).% Lemma 11 when $\lambda=0$ in \cite{he2022characterization}.
%The second part may be founded in \cite{chacon2021fractional} (see Lemma 7), and the part (4) follows from  \labelcref{enumerate:holder-p-adic-variable} of  \cref{lem:holder-inequality-p-adic}. Moreover, according to \cref{lem.thm-5.2-variable-max-bounded} and \labelcref{enumerate:charact-norm-p-adic-variable-chacon2021fractional} of \cref{lem:norm-characteristic-p-adic}, the third part  can also be deduced by simple calculations. So, we omit the proofs.
%--------------------------------
\begin{lemma}[Norms of characteristic functions]\label{lem:norm-characteristic-p-adic}
%--------------------------
Let $\mathbb{Q}_{p}^{n}$ be an $n$-dimensional $p$-adic vector space.
%-------
 \begin{enumerate}[label=(\arabic*)] %fullwidth,
%------------
\item If $1\le q<\infty$. Then there exist a constant $C > 0 $ such that
%---------------
\begin{align*}
%-------
  \|\dchi_{B_{\gamma}(x)}\|_{L^{q}(\mathbb{Q}_{p}^{n})}    &= |B_{\gamma}(x)|_{h}^{1/q}=p^{n\gamma/q}.
%----------
\end{align*}
%------------
 \vspace{-1em} \label{enumerate:charact-norm-p-adic-he}
%----------------
\item  (\cite[Lemma 7]{chacon2021fractional}) If $q(\cdot)\in  \mathscr{C}^{\log}(\mathbb{Q}_{p}^{n})$. Then
%---------------
\begin{align*}
%-------
  \|\dchi_{B_{\gamma}(x)}\|_{L^{q(\cdot)}(\mathbb{Q}_{p}^{n})}    & \le C p^{n\gamma/q(x,\gamma)},
%----------
\end{align*}
%------------
where
%-------------
\begin{align*}
%-------
q(x,\gamma)=
%-------------
\begin{cases}
%-------------
 q(x) & \text{if} \ \gamma < 0,
%-------------
\\
%-------------
q(\infty) & \text{if} \ \gamma \ge 0.
%-------------
\end{cases}
%----------
%----------
\end{align*}
%------------
  \vspace{-1em}\label{enumerate:charact-norm-p-adic-variable-chacon2021fractional}
%-------
\item  If $q(\cdot)\in  \mathscr{C}^{\log}(\mathbb{Q}_{p}^{n})$ and $q(\cdot)\in  \mathscr{P}(\mathbb{Q}_{p}^{n} )$. Then there exist a constant $C > 0 $ such that
%---------------
\begin{align*}
%-------
  \dfrac{1}{|B_{\gamma} (x)|_{h} }  \big\|\dchi_{B_{\gamma} (x)} \big\|_{L^{q(\cdot)}(\mathbb{Q}_{p}^{n}) }  \big\|   \dchi_{B_{\gamma} (x)} \big\|_{L^{q'(\cdot)}(\mathbb{Q}_{p}^{n}) } <C
%----------
\end{align*}
%------------
holds for all  $p$-adic ball $B_{\gamma} (x)\subset \mathbb{Q}_{p}^{n}$.
%------------
 \label{enumerate:charact-norm-conjugate-p-adic-variable}
%-------
\item Let $0 <\alpha<n$. If  $ q(\cdot) $, $r(\cdot)\in \mathscr{P}(\mathbb{Q}_{p}^{n})$  with   $r_{+}<\frac{n}{\alpha}$ and $1/q(\cdot) = 1/r(\cdot) - \alpha/n$, then  there exists a  constant $C>0$ such that
%----------------
\begin{align*}
%-------
  \|\dchi_{B_{\gamma} (x)} \|_{L^{r(\cdot)}(\mathbb{Q}_{p}^{n})}\le C  |B_{\gamma} (x)|_{h}^{\alpha/n} \|\dchi_{B_{\gamma} (x)}  \|_{L^{q(\cdot)}(\mathbb{Q}_{p}^{n})}
%----------
\end{align*}
%------------
holds for all  $p$-adic balls $B_{\gamma} (x)\subset \mathbb{Q}_{p}^{n}$.
%-----------
    \label{enumerate:charact-norm-fraction-p-adic-variable}
%----------
\end{enumerate}
%------------
\end{lemma}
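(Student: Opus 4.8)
The plan is to prove the four assertions almost independently, using only the explicit Haar-measure identity $|B_{\gamma}(x)|_{h}=p^{n\gamma}$ recorded above together with the Hölder-type inequalities of \cref{lem:holder-inequality-p-adic}, and to deduce the two variable-exponent estimates (3) and (4) from the constant-exponent computation (1) and from the cited estimate (2). For (1), I would argue directly: since $\dchi_{B_{\gamma}(x)}^{q}=\dchi_{B_{\gamma}(x)}$, one has $\|\dchi_{B_{\gamma}(x)}\|_{L^{q}(\mathbb{Q}_{p}^{n})}^{q}=\dint_{\mathbb{Q}_{p}^{n}}\dchi_{B_{\gamma}(x)}(y)\,\mathd y=|B_{\gamma}(x)|_{h}=p^{n\gamma}$ by the identity $\dint_{B_{\gamma}(a)}\mathd x=p^{n\gamma}$; taking $q$-th roots gives the stated equality exactly (so the constant is $1$). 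Assertion (2) is quoted verbatim from \cite[Lemma 7]{chacon2021fractional} and requires no further argument.

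For (3), I would apply (2) twice. Since $q(\cdot)\in\mathscr{C}^{\log}(\mathbb{Q}_{p}^{n})\cap\mathscr{P}(\mathbb{Q}_{p}^{n})$ with $1<q_{-}\le q_{+}<\infty$, its conjugate $q'(\cdot)$, defined by $1/q(\cdot)+1/q'(\cdot)=1$, also lies in $\mathscr{C}^{\log}(\mathbb{Q}_{p}^{n})$: indeed $|1/q'(x)-1/q'(y)|=|1/q(x)-1/q(y)|$, and the boundedness of $q(\cdot)$ away from $1$ and $\infty$ transfers both the local and the decay log-Hölder estimates from $1/q(\cdot)$ to $1/q'(\cdot)$, hence to $q'(\cdot)$. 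Thus (2) applies to both exponents and yields $\|\dchi_{B_{\gamma}(x)}\|_{L^{q(\cdot)}(\mathbb{Q}_{p}^{n})}\le Cp^{n\gamma/q(x,\gamma)}$ and $\|\dchi_{B_{\gamma}(x)}\|_{L^{q'(\cdot)}(\mathbb{Q}_{p}^{n})}\le Cp^{n\gamma/q'(x,\gamma)}$. Multiplying and using $1/q(x,\gamma)+1/q'(x,\gamma)=1$, which holds in the $\gamma<0$ case as $1/q(x)+1/q'(x)=1$ and in the $\gamma\ge 0$ case as $1/q(\infty)+1/q'(\infty)=1$, gives
\[
\|\dchi_{B_{\gamma}(x)}\|_{L^{q(\cdot)}(\mathbb{Q}_{p}^{n})}\,\|\dchi_{B_{\gamma}(x)}\|_{L^{q'(\cdot)}(\mathbb{Q}_{p}^{n})}\le Cp^{n\gamma}=C|B_{\gamma}(x)|_{h},
\]
which is precisely the claimed bound after dividing by $|B_{\gamma}(x)|_{h}$.

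For (4), the key observation is that the ``gap'' exponent is constant: setting $1/s=1/r(\cdot)-1/q(\cdot)=\alpha/n$ forces $s=n/\alpha$, a constant lying in $(1,\infty)$ because $0<\alpha<n$, hence $s\in\mathscr{P}(\mathbb{Q}_{p}^{n})$. I would then write $\dchi_{B_{\gamma}(x)}=\dchi_{B_{\gamma}(x)}\cdot\dchi_{B_{\gamma}(x)}$ and apply the generalized Hölder inequality of \cref{lem:holder-inequality-p-adic} with the exponents $q(\cdot)$ and $s$, whose reciprocals sum to $1/r(\cdot)$, to obtain $\|\dchi_{B_{\gamma}(x)}\|_{L^{r(\cdot)}(\mathbb{Q}_{p}^{n})}\le C\|\dchi_{B_{\gamma}(x)}\|_{L^{q(\cdot)}(\mathbb{Q}_{p}^{n})}\|\dchi_{B_{\gamma}(x)}\|_{L^{s}(\mathbb{Q}_{p}^{n})}$. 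Finally, (1) evaluates $\|\dchi_{B_{\gamma}(x)}\|_{L^{s}(\mathbb{Q}_{p}^{n})}=|B_{\gamma}(x)|_{h}^{1/s}=|B_{\gamma}(x)|_{h}^{\alpha/n}$, and substituting gives the desired estimate. Here the hypotheses $r(\cdot)\in\mathscr{P}(\mathbb{Q}_{p}^{n})$ and $r_{+}<n/\alpha$ guarantee that the assumed $q(\cdot)$ indeed belongs to $\mathscr{P}(\mathbb{Q}_{p}^{n})$ and that the three exponents meet the requirements of the generalized Hölder inequality.

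The one step requiring genuine care is the preservation of global log-Hölder continuity under passage to the conjugate exponent in (3); this is standard in the variable-exponent literature but must be justified in the $p$-adic setting, where it follows from the very definitions in \cref{def.4.1-4.4-log-holder} once one uses the two-sided bound $1<q_{-}\le q_{+}<\infty$. Everything else reduces to the explicit Haar measure of a ball and to Hölder's inequality, so I expect no further obstacles.
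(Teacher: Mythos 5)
Your proposal is correct, and it is worth noting that the paper itself offers no proof of \cref{lem:norm-characteristic-p-adic} at all: it imports the lemma wholesale from \cite[Lemma 2.4]{wu2023characterizationC}, with part (2) quoted from \cite[Lemma 7]{chacon2021fractional}, so the only comparison possible is against the derivation those citations suggest --- and yours is exactly that natural derivation. Part (1) by the identity $\int_{B_{\gamma}(a)}\mathd x=p^{n\gamma}$ (with constant $1$), part (3) by applying the quoted estimate (2) to both $q(\cdot)$ and $q'(\cdot)$ and using $1/q(x,\gamma)+1/q'(x,\gamma)=1$ in both the $\gamma<0$ and $\gamma\ge 0$ regimes, and part (4) by factoring $\dchi_{B_{\gamma}(x)}=\dchi_{B_{\gamma}(x)}\cdot\dchi_{B_{\gamma}(x)}$ and invoking \cref{lem:holder-inequality-p-adic}~\labelcref{enumerate:holder-p-adic-variable} with the constant gap exponent $s=n/\alpha\in(1,\infty)$, then evaluating $\|\dchi_{B_{\gamma}(x)}\|_{L^{s}(\mathbb{Q}_{p}^{n})}=|B_{\gamma}(x)|_{h}^{\alpha/n}$ via (1), are all sound, and your verifications of the side conditions (that $q'(\cdot)$ inherits both the ball-oscillation condition of $\mathscr{C}_{0}^{\log}(\mathbb{Q}_{p}^{n})$ and the decay condition of $\mathscr{C}_{\infty}^{\log}(\mathbb{Q}_{p}^{n})$ because $(q(x)-1)(q(y)-1)\ge (q_{-}-1)^{2}>0$, and that $r_{+}<n/\alpha$ forces $q(\cdot)\in\mathscr{P}(\mathbb{Q}_{p}^{n})$ with $q'(\infty)=(q(\infty))'$) are precisely the points that need care. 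Two cosmetic remarks: the existence of the limit value $q(\infty)$ is implicit in the $\mathscr{C}_{\infty}^{\log}$ decay condition (the function is Cauchy as $|x|_{p}\to\infty$), which you use silently through the quoted form of (2); and your multiplication yields $\le C|B_{\gamma}(x)|_{h}$ rather than the strict inequality $<C$ of part (3), which is absorbed by enlarging the constant. Neither affects correctness.
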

%-------------

%-------------
\subsection{$p$-adic BMO spaces and Morrey spaces}  %function spaces
%------------

The  bounded mean oscillation function  space $\bmo $ was introduced by John and Nirenberg \cite{john1961functions} which plays a crucial role in harmonic analysis.

%-----------------
\begin{definition}[$p$-adic BMO space]  \label{def.5.3-bmo-space-kim2009carleson}
%---------------
%Let  $f\in L_{\loc}^1 (\mathbb{Q}_{p}^n)$.  $\bmo(\mathbb{Q}_{p}^{n})$  is the set of all locally integrable functions $f$ on $\mathbb{Q}_{p}^n$  satisfy
A  locally integrable function $f$  is said to belong to $\bmo(\mathbb{Q}_{p}^n)$ if the seminorm given by
%-------
\begin{align*}
%------------------
  \|f\|_{p,*} &=\|f\|_{\bmo(\mathbb{Q}_{p}^n)} = \sup_{\gamma\in \mathbb{Z} \atop x\in \mathbb{Q}_{p}^{n}} \dfrac{1}{|B_{\gamma} (x)|_{h}} \dint_{B_{\gamma} (x)} \big|f(y)-f_{B_{\gamma} (x)}\big| \mathd y  %<\infty,
%--------------------
\end{align*}
%-----------
is finite, where  the supremum is taken over all balls $B_{\gamma}(x)$ on $\mathbb{Q}_{p}^{n}$, and  $f_{B_{\gamma} (x)}$ denotes the average of $f$ over $B_{\gamma} (x)$, i.e.,
$f_{B_{\gamma}(x)} = |B_{\gamma}(x)|_{h}^{-1} \int_{B_{\gamma}(x)}   f(y)  \mathd y$.
%------------
\end{definition}
%--------------

 The following properties can be found in \cite[Theorem 5.16 and Corollary 5.17]{kim2009carleson}.
%--------------
\begin{lemma}\label{lem:cor-5.17-kim2009carleson}
%-------
 Let   $f\in \bmo(\mathbb{Q}_{p}^{n})$, the following properties hold:
%-------
\begin{enumerate}[label=(\alph*)] %fullwidth,
%------------
\item  If $0<q<\infty$, then the norm $ \|f\|_{\bmo_{q}(\mathbb{Q}_{p}^{n})} \le C_{pq} \|f\|_{\bmo(\mathbb{Q}_{p}^{n})}$   for some positive constant $C_{pq}$, where
$\|f\|_{\bmo_{q} (\mathbb{Q}_{p}^{n})}$   defined as
%-------
\begin{align*}
%------------------
 \|f\|_{p,*,q}= \|f\|_{\bmo_{q}(\mathbb{Q}_{p}^n)} = \sup_{\gamma\in \mathbb{Z} \atop x\in \mathbb{Q}_{p}^{n}} \Big(\dfrac{1}{|B_{\gamma} (x)|_{h}} \dint_{B_{\gamma} (x)} \big|f(y)-f_{B_{\gamma} (x)}\big|^{q} \mathd y\Big)^{1/q}.
%------------
\end{align*}
%-----------
  \vspace{-1em} \label{enumerate:cor-5.17-a-kim2009carleson}
%-------
\item If  $q>1$, then the norm $\|f\|_{\bmo(\mathbb{Q}_{p}^{n})} $ is equivalent to the norm $\|f\|_{\bmo_{q} (\mathbb{Q}_{p}^{n})}$.
%-----------
 \label{enumerate:cor-5.17-b-kim2009carleson}
%-------
\item \textbf(John–Nirenberg type inequality)\ There exist some constants $c_{1}, c_{2} > 0$ depending only on $p$ and $n$ such that
%-------
\begin{align} \label{inequ:John-Nirenberg-ine}
%---------
  \big|\{x\in B: |f(x)-f_{B}|>t\}\big|_{h} \le c_{1} \exp\Big(-\frac{c_{2}t}{\|f\|_{p,*}}\Big) |B|_{h}, \qquad t>0
%------------
\end{align}
%-----------
for any $p$-adic ball $B\in \mathcal{B}_{p}$.
%-----------
 \label{enumerate:thm-5.16-kim2009carleson}
%-------
\item For any $\lambda$ with $0<\lambda<c_{2}/\|f\|_{p,*}$, where  $c_{2}$ is the constant  given in  \eqref{inequ:John-Nirenberg-ine},
%-------
\begin{align*}
%------------------
    \sup_{\gamma\in \mathbb{Z} \atop x\in \mathbb{Q}_{p}^{n}} \dfrac{1}{|B_{\gamma} (x)|_{h}} \dint_{B_{\gamma} (x)} \exp\big(\lambda|f(y)-f_{B_{\gamma} (x)}|\big)  \mathd y<\infty.
%------------
\end{align*}
%-------
 \label{enumerate:cor-5.17-c-kim2009carleson}
%-------
\end{enumerate}
%------------
\end{lemma}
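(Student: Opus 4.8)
The plan is to prove the John--Nirenberg type inequality \labelcref{enumerate:thm-5.16-kim2009carleson} first, since it is the engine of the whole lemma, and then to read off \labelcref{enumerate:cor-5.17-a-kim2009carleson}, \labelcref{enumerate:cor-5.17-b-kim2009carleson} and \labelcref{enumerate:cor-5.17-c-kim2009carleson} from it by elementary distribution-function (layer-cake) computations. The key structural advantage in the $p$-adic setting is the ultrametric nesting recorded in \cref{lem:lem-3.1-kim2009q}: any two balls are either disjoint or nested, and each ball $B_{\gamma}(a)$ is the disjoint union of exactly $p^{n}$ subballs of radius $p^{\gamma-1}$, each of Haar measure $p^{-n}|B_{\gamma}(a)|_{h}$. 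This removes the overlap losses that complicate the Euclidean Calderón--Zygmund argument and is what forces the constants $c_{1},c_{2}$ to depend only on $p$ and $n$.

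To prove \labelcref{enumerate:thm-5.16-kim2009carleson}, I would normalize $\|f\|_{p,*}=1$, fix a ball $B$, set $F=f-f_{B}$, and run a stopping-time decomposition at a fixed height $\lambda>1$: among the iterated $p^{n}$-fold subdivisions of $B$, select the maximal subballs $\{Q_{j}\}$ with $|Q_{j}|_{h}^{-1}\dint_{Q_{j}}|F|\,\mathd y>\lambda$. Since $|B|_{h}^{-1}\dint_{B}|F|\,\mathd y\le 1<\lambda$ the ball $B$ is not selected, and since the parent of each $Q_{j}$ fails the test while being exactly $p^{n}$ times larger, one gets $\lambda<|Q_{j}|_{h}^{-1}\dint_{Q_{j}}|F|\,\mathd y\le p^{n}\lambda$; summing yields $\sum_{j}|Q_{j}|_{h}\le\lambda^{-1}|B|_{h}$, the Lebesgue differentiation theorem gives $|F|\le\lambda$ a.e. on $B\setminus\bigcup_{j}Q_{j}$, and the right-hand bound gives $|f_{Q_{j}}-f_{B}|\le p^{n}\lambda$. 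Writing $\Phi(t)=\sup_{B\in\mathcal{B}_{p}}|B|_{h}^{-1}|\{x\in B:|f-f_{B}|>t\}|_{h}$, for $t>p^{n}\lambda$ the superlevel set of $F$ lies mod null in $\bigcup_{j}Q_{j}$, where $|f-f_{B}|>t$ forces $|f-f_{Q_{j}}|>t-p^{n}\lambda$; hence $|\{x\in B:|F|>t\}|_{h}\le\Phi(t-p^{n}\lambda)\sum_{j}|Q_{j}|_{h}\le\lambda^{-1}\Phi(t-p^{n}\lambda)|B|_{h}$, and taking the supremum over $B$ gives the self-improving recursion $\Phi(t)\le\lambda^{-1}\Phi(t-p^{n}\lambda)$, a genuine contraction because $\lambda>1$. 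Iterating from $\Phi\le1$ across the intervals $(kp^{n}\lambda,(k+1)p^{n}\lambda]$ yields $\Phi(t)\le\lambda^{-\lfloor t/(p^{n}\lambda)\rfloor}\le c_{1}e^{-c_{2}t}$ with $c_{2}=(\log\lambda)/(p^{n}\lambda)$, and restoring the normalization by homogeneity gives \labelcref{inequ:John-Nirenberg-ine}.

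The remaining parts follow by integrating \labelcref{inequ:John-Nirenberg-ine} against the layer-cake formula. For \labelcref{enumerate:cor-5.17-a-kim2009carleson}, for any ball $B$ and $0<q<\infty$ I write $|B|_{h}^{-1}\dint_{B}|f-f_{B}|^{q}\,\mathd y=q\dint_{0}^{\infty}t^{q-1}|B|_{h}^{-1}|\{x\in B:|f-f_{B}|>t\}|_{h}\,\mathd t\le c_{1}q\,\Gamma(q)(\|f\|_{p,*}/c_{2})^{q}$, and taking the supremum over $B$ and the $q$-th root gives $\|f\|_{p,*,q}\le C_{pq}\|f\|_{p,*}$. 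Part \labelcref{enumerate:cor-5.17-b-kim2009carleson} combines this with the trivial reverse bound: for $q>1$, Jensen's inequality gives $\|f\|_{p,*}=\|f\|_{p,*,1}\le\|f\|_{p,*,q}$, so the two norms are equivalent. Finally, for \labelcref{enumerate:cor-5.17-c-kim2009carleson}, the layer-cake identity yields $|B|_{h}^{-1}\dint_{B}\exp(\lambda|f-f_{B}|)\,\mathd y=1+\lambda\dint_{0}^{\infty}e^{\lambda t}|B|_{h}^{-1}|\{x\in B:|f-f_{B}|>t\}|_{h}\,\mathd t\le1+c_{1}\lambda\dint_{0}^{\infty}e^{(\lambda-c_{2}/\|f\|_{p,*})t}\,\mathd t$, and the last integral converges precisely when $\lambda<c_{2}/\|f\|_{p,*}$, giving the stated finiteness.

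The main obstacle is the John--Nirenberg step \labelcref{enumerate:thm-5.16-kim2009carleson}, and within it the delicate point is calibrating the stopping height $\lambda>1$ so that the recursion closes with contraction factor $\theta=\lambda^{-1}<1$ while keeping the shift $p^{n}\lambda$ controlled; everything else is bookkeeping. Here the ultrametric splitting of \cref{lem:lem-3.1-kim2009q} does the heavy lifting, guaranteeing that selected balls are pairwise disjoint and have parents of measure exactly $p^{n}|Q_{j}|_{h}$, which is what makes the recursion exact rather than merely approximate and keeps the constants uniform.
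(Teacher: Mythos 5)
Your proof is correct, but there is nothing in the paper to compare it against: the paper does not prove this lemma, it imports it wholesale from Kim \cite[Theorem 5.16 and Corollary 5.17]{kim2009carleson}. What you have supplied is a self-contained proof along the standard John--Nirenberg lines, and it is essentially the argument of the cited source: a stopping-time (Calder\'on--Zygmund) selection over the canonical subdivision tree of $p$-adic balls, which the ultrametric dichotomy of \cref{lem:lem-3.1-kim2009q} makes exact --- every ball of radius $p^{\gamma}$ is partitioned into precisely $p^{n}$ children of measure $p^{-n}$ times the parent's, every subball of $B$ belongs to the tree, and maximal selected balls exist because the chain of balls between a point and $B$ is finite. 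The individual steps all check out: disjointness of the $Q_{j}$ and the two-sided bound $\lambda<|Q_{j}|_{h}^{-1}\dint_{Q_{j}}|F|\le p^{n}\lambda$ follow from maximality and the exact parent--child measure ratio; the a.e.\ bound $|F|\le\lambda$ off $\bigcup_{j}Q_{j}$ is legitimately justified by the $p$-adic Lebesgue differentiation theorem (\cref{lem:cor2.11-kim2009carleson}); the recursion $\Phi(t)\le\lambda^{-1}\Phi(t-p^{n}\lambda)$ iterates from $\Phi\le1$ to give $c_{1}=\lambda$ and $c_{2}=(\log\lambda)/(p^{n}\lambda)$, depending only on $p$ and $n$, with homogeneity restoring the normalization $\|f\|_{p,*}=1$; and the layer-cake computations for \labelcref{enumerate:cor-5.17-a-kim2009carleson} and \labelcref{enumerate:cor-5.17-c-kim2009carleson}, together with Jensen's inequality for the reverse estimate in \labelcref{enumerate:cor-5.17-b-kim2009carleson}, are exactly the right bookkeeping (note $q\Gamma(q)=\Gamma(q+1)$, so your constant $C_{pq}=(c_{1}\Gamma(q+1))^{1/q}/c_{2}$ is finite for every $0<q<\infty$). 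In short, your proof is sound and would serve as a correct stand-alone substitute for the citation; its one structural merit worth recording is that the $p$-adic nesting makes the covering step lossless, so the contraction in the recursion is genuine rather than approximate, which is precisely why $c_{1},c_{2}$ depend only on $p$ and $n$.
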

%------------

%For a function $f\in L_{\loc}^1 (\mathbb{Q}_{p}^n)$, we define the $p$-adic sharp maximal function $ \mathcal{M}_{p}^\sharp $ by
%%----------------
%\begin{align}  \label{equ:p-adic-sharp-operator}
%%-------
%  \mathcal{M}_{p}^{\sharp}(f)(x) = \sup_{\gamma\in \mathbb{Z} \atop x\in \mathbb{Q}_{p}^{n}} \dfrac{1}{|B_{\gamma} (x)|_{h}} \dint_{B_{\gamma} (x)} |f(y)-f_{B_{\gamma} (x)}| \mathd y.
%%----------
%\end{align}
%%------------
%  where   $f_{B_{\gamma} (x)}$ denotes the average of $f$ over $B_{\gamma} (x)$, i.e., $f_{B_{\gamma}(x)} = |B_{\gamma}(x)|_{h}^{-1} \int_{B_{\gamma}(x)}   f(y)  \mathd y$.

%%-------------
%\subsection{$p$-adic Morrey spaces}  %function spaces
%%------------

Morrey spaces, named after Morrey, were first introduced by Morrey \cite{morrey1938solutions} to study the local behavior of solutions to elliptic partial differential equations, namely, regularity problems arising in the calculus of variations.

The following gives the definition of {$p$-adic Morrey spaces.

%-----------------
\begin{definition}[$p$-adic Morrey space]  \label{def.2.3-morrey-p-adic-ma2020weighted}  %\cite{ma2020weighted}
%---------------
Let $1\le q<\infty$ and $\lambda\ge 0$, the $p$-adic Morrey spaces $  L^{q,\lambda}  (\mathbb{Q}_{p}^n)$ is defined as
%-------
\begin{align*}
%-----------
 L^{q,\lambda}(\mathbb{Q}_{p}^n) &=\{f\in  L_{\loc}^{q}  (\mathbb{Q}_{p}^{n}): \|f\|_{L^{q,\lambda}(\mathbb{Q}_{p}^n)} <\infty\},
%------------
\\ \intertext{and}
%---------
  \|f\|_{L^{q,\lambda}(\mathbb{Q}_{p}^{n})} &=  \sup_{\gamma\in \mathbb{Z} \atop x\in \mathbb{Q}_{p}^{n}} \Big(\dfrac{1}{|B_{\gamma} (x)|_{h}^{ \lambda/n}} \dint_{B_{\gamma} (x)}  |f(y) |^{q} \mathd y \Big)^{1/q},
%--------------------
\end{align*}
%-----------
  where  the supremum is taken over all balls $B_{\gamma}(x)$ on $\mathbb{Q}_{p}^{n}$.
%------------
\end{definition}
%--------------
In particular,
%-----------------
\begin{align*}
%-------
  L^{q,\lambda}(\mathbb{Q}_{p}^{n})  =
%-------
\begin{cases}
%------------
   L^{q}(\mathbb{Q}_{p}^{n})  & \text{if}\ \lambda=0, \\
  L^{\infty}(\mathbb{Q}_{p}^{n})  & \text{if}\ \lambda=n,\\
  \Theta             & \text{if}\ \lambda<0 \ \text{or}\ \lambda>n,
%------------
\end{cases}
%----------
\end{align*}
%------------
where $\Theta$ is the set of all functions equivalent to $0$ on $\mathbb{Q}_{p}^{n}$.

%-------------
\subsection{Some facts of the Orlicz spaces}
%------------

%We need some basic facts from the theory of Orlicz spaces. For more information about Orlicz spaces, we refer to \cite{rao1991theory}
The following gives some basic facts about Orlicz spaces that we need (refer to \cite{rao1991theory} for more information).

We call $\Psi$ a Young function defined on $[0,\infty)$, if $\Psi$ is a (left) continuous,  convex and increasing function with
%-----------
\begin{align*}
%-------
\begin{cases}
%------------
 \lim\limits_{t\to +0}\Psi(t) =\Psi(0) =0,    \\
 \lim\limits_{t\to\infty}\Psi(t) =\infty.
%------------
\end{cases}
%----------
\end{align*}
%--------

The Orlicz space $L^{\Psi}(\mathbb{Q}_{p}^{n})$ is  the set of all measurable functions $f$ such that
%-----------
\begin{align*}
%-------
 \dint_{\mathbb{Q}_{p}^{n}} \Psi \Big(\frac{|f(y)|}{\nu}\Big) \mathd y <\infty
%----------
\end{align*}
%--------
 for some $\nu>0$. It is a Banach space when endowed with the Luxemburg norm
%-----------
\begin{align*}
%-------
 \|f\|_{\Psi} =  \|f\|_{L^{\Psi}(\mathbb{Q}_{p}^{n})} = \inf\Big\{\nu>0: \dint_{\mathbb{Q}_{p}^{n}} \Psi \Big(\frac{|f(y)|}{\nu}\Big) \mathd y \le 1\Big\}.
%----------
\end{align*}
%--------
Specifically,
%-------------
\begin{align*}
%-------
L^{\Psi}(\mathbb{Q}_{p}^{n}) =
%------------
 \begin{cases}
 %----------
L^{r}(\mathbb{Q}_{p}^{n})  & \text{if}\ \Psi(t) = t^{r}, r\in [1,\infty),   \\
L^{\infty}(\mathbb{Q}_{p}^{n})  & \text{if}\ \Psi(t) = 0, t\in [0,1],     \\
L^{\infty}(\mathbb{Q}_{p}^{n})  & \text{if}\  \Psi(t) = \infty, t>1.
%------------
\end{cases}
%----------
\end{align*}
%---------

In addition, the $\Psi$-average of a function $f$ on a $p$-adic ball $B_{\gamma}(x)\subset \mathbb{Q}_{p}^{n}$  is defined as
%-----------
\begin{align*}
%-------
 \|f\|_{\Psi,B_{\gamma}(x)} =   \inf\Big\{\nu>0: \frac{1}{|B_{\gamma}(x)|_{h}}\dint_{B_{\gamma}(x)} \Psi \Big(\frac{|f(y)|}{\nu}\Big) \mathd y \le 1\Big\}.
%----------
\end{align*}
%--------
The following gives the generalized Jensen's inequality (see \cite[(2.10)]{lerner2009new} or \cite[Lemma 4.2]{lu2014multilinear}).
%----------------
\begin{lemma} \label{lem:4.2-lu2014multilinear}
%-----------------
 If  $\Psi_{1}$ and $\Psi_{2}$ are two Young functions with $\Psi_{1}\le \Psi_{2}$, for $t\ge t_{0}>0$, then there exists a positive constant $C$  such that    $\|f\|_{\Psi_{1},B_{\gamma}(x)}\le C \|f\|_{\Psi_{2},B_{\gamma}(x)}$.
%-------------
\end{lemma}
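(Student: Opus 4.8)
The plan is to prove the inequality with a constant $C$ depending only on $\Psi_1$, $\Psi_2$ and $t_0$, and in particular \emph{independent of the ball $B_\gamma(x)$ and of $f$}, since that uniformity is what the later applications require. First I would dispose of the degenerate cases: if $\|f\|_{\Psi_2,B_\gamma(x)}=0$ then $f=0$ almost everywhere on $B_\gamma(x)$ and both sides vanish, while if $\|f\|_{\Psi_2,B_\gamma(x)}=\infty$ there is nothing to prove. So I may assume $0<\|f\|_{\Psi_2,B_\gamma(x)}<\infty$ and, by the homogeneity of the $\Psi$-average, normalise $f$ so that $\|f\|_{\Psi_2,B_\gamma(x)}=1$. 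Because $\Psi_2$ is left-continuous, the standard property of the Luxemburg average then gives
\begin{align*}
\frac{1}{|B_\gamma(x)|_h}\dint_{B_\gamma(x)}\Psi_2(|f(y)|)\,\mathd y\le 1 .
\end{align*}
It therefore suffices to exhibit a constant $C\ge 1$, uniform in the sense above, with $\frac{1}{|B_\gamma(x)|_h}\dint_{B_\gamma(x)}\Psi_1(|f(y)|/C)\,\mathd y\le 1$, for this forces $\|f\|_{\Psi_1,B_\gamma(x)}\le C$ and, undoing the normalisation, the claimed bound.

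The key step is to split the ball according to the size of the argument, so as to isolate the region where the hypothesis $\Psi_1\le\Psi_2$ is actually available. Writing $B=B_\gamma(x)$, decompose $B=E_1\cup E_2$ with $E_1=\{y\in B:|f(y)|/C\ge t_0\}$ and $E_2=\{y\in B:|f(y)|/C< t_0\}$. On $E_1$ the argument $|f|/C$ lies in $[t_0,\infty)$, so $\Psi_1(|f|/C)\le\Psi_2(|f|/C)$; combining this with the convexity scaling $\Psi_2(s/C)\le C^{-1}\Psi_2(s)$, valid for $C\ge1$ since $\Psi_2$ is convex with $\Psi_2(0)=0$, yields
\begin{align*}
\frac{1}{|B|_h}\dint_{E_1}\Psi_1\!\Big(\frac{|f(y)|}{C}\Big)\,\mathd y\le\frac{1}{C}\cdot\frac{1}{|B|_h}\dint_{B}\Psi_2(|f(y)|)\,\mathd y\le\frac{1}{C}.
\end{align*}
On $E_2$ the argument is below $t_0$, where $\Psi_1$ need not be dominated by $\Psi_2$; here I would instead invoke the chord estimate $\Psi_1(s)\le t_0^{-1}\Psi_1(t_0)\,s$ for $0\le s\le t_0$, which follows from convexity of $\Psi_1$ and $\Psi_1(0)=0$, to obtain
\begin{align*}
\frac{1}{|B|_h}\dint_{E_2}\Psi_1\!\Big(\frac{|f(y)|}{C}\Big)\,\mathd y\le\frac{\Psi_1(t_0)}{t_0\,C}\cdot\frac{1}{|B|_h}\dint_{B}|f(y)|\,\mathd y.
\end{align*}

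To close the argument I would control the residual $L^1$-average by Jensen's inequality for the convex function $\Psi_2$: from $\Psi_2\big(\frac{1}{|B|_h}\int_B|f|\big)\le\frac{1}{|B|_h}\int_B\Psi_2(|f|)\le 1$ and the monotonicity of $\Psi_2$ one gets $\frac{1}{|B|_h}\int_B|f(y)|\,\mathd y\le K$, where $K:=\sup\{t\ge 0:\Psi_2(t)\le 1\}<\infty$ depends only on $\Psi_2$. Adding the two contributions gives
\begin{align*}
\frac{1}{|B|_h}\dint_{B}\Psi_1\!\Big(\frac{|f(y)|}{C}\Big)\,\mathd y\le\frac{1}{C}\Big(1+\frac{K\,\Psi_1(t_0)}{t_0}\Big),
\end{align*}
so the choice $C:=\max\{1,\,1+K\Psi_1(t_0)/t_0\}$ renders the right-hand side at most $1$; since this $C$ involves only $\Psi_1,\Psi_2,t_0$, it is indeed uniform in $\gamma$ and $x$. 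The main, and essentially the only, obstacle is the small-argument region $E_2$, where the pointwise comparison of the Young functions breaks down: the device that rescues the estimate is the convexity chord bound for $\Psi_1$ combined with the Jensen bound on the $L^1$-average, which together turn the uncontrolled term into a quantity of size $O(1/C)$ that can be absorbed by enlarging $C$. I would also record the single standard fact used above, namely that for a left-continuous Young function one has $\frac{1}{|B_\gamma(x)|_h}\int_{B_\gamma(x)}\Psi_2\big(|f(y)|/\|f\|_{\Psi_2,B_\gamma(x)}\big)\,\mathd y\le 1$, which is precisely what legitimises the normalisation in the first step.
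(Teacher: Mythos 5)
Your proof is correct, and it is worth noting that the paper itself contains no proof of this lemma at all: it is quoted with a pointer to \cite[(2.10)]{lerner2009new} and \cite[Lemma 4.2]{lu2014multilinear}. So your argument is not an alternative to the paper's proof but a self-contained replacement for a citation, and it is the standard one: normalise by the Luxemburg average (using left-continuity and monotone convergence to get the unit modular bound), split the ball at the threshold $t_0$, use the hypothesis $\Psi_{1}\le\Psi_{2}$ together with the convexity scaling $\Psi_{2}(s/C)\le C^{-1}\Psi_{2}(s)$ on $\{|f|/C\ge t_{0}\}$, and the chord bound $\Psi_{1}(s)\le t_{0}^{-1}\Psi_{1}(t_{0})\,s$ plus Jensen on the complementary set. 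Each step checks out, including the degenerate cases and the verification that $K=\sup\{t\ge 0:\Psi_{2}(t)\le 1\}<\infty$ because $\Psi_{2}(t)\to\infty$. Importantly, your constant $C$ depends only on $\Psi_{1},\Psi_{2},t_{0}$ and is uniform in $\gamma$, $x$ and $f$, which is exactly the uniformity the paper needs when it applies the lemma, via \eqref{equ:generalized-holder-orlicz}, to deduce \eqref{equ:generalized-holder-inequality-orlicz} and the estimates of $I_{2}$ and $I_{3}$ in the proof of \cref{lem:frac-max-pointwise-estimate}. One small caveat: your chord estimate implicitly assumes $\Psi_{1}(t_{0})<\infty$, whereas the paper's conventions allow Young functions taking the value $+\infty$ (as in its $L^{\infty}$ examples); in that extended-real setting the argument on $E_{2}$ needs a minor patch (e.g., using that the unit $\Psi_{2}$-modular forces $|f|\le t_{0}$ a.e.\ and choosing $C$ with $\Psi_{1}(t_{0}/C)\le 1$). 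This is harmless for the paper's only application, where $\Psi_{1}=\bar{\Psi}\le\Phi(t)=t(1+\log^{+}t)$ is finite-valued, but it is worth recording next to the statement.
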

%-------------

Define the complementary  function  $\bar{\Psi}$ of $\Psi$   as
%-----------
\begin{align} \label{equ:complementary-orlicz-function}
%-------
 \bar{\Psi}(s)=
%-------
\begin{cases}
%------------
 \sup\limits_{0\le t<\infty} \{st-\Phi(t)\}   & s\in [0,\infty),  \\
  \infty  & s=\infty.
%------------
\end{cases}
%---------
\end{align}
%--------
Thus,  $\bar{\Psi}$ is also a Young function, and its $\bar{\Psi}$-average  is related to the $\Psi$-average via the generalized H\"{o}lder's inequality over  $p$-adic ball $B_{\gamma}(x)$ (refer to \cite{rao1991theory} or \cite[2.12]{lerner2009new}), that is,
%-----------
\begin{align} \label{equ:generalized-holder-orlicz}
%-------
   \frac{1}{|B_{\gamma}(x)|_{h}}\dint_{B_{\gamma}(x)}  |f(y)g(y)|  \mathd y \le  C\|f\|_{\Psi,B_{\gamma}(x)} \|g\|_{\bar{\Psi},B_{\gamma}(x)}
%----------
\end{align}
%--------

An interesting particular case is the Young function   $\Phi(t)=t(1+\log^{+}t)$ and $\Psi(t)=e^{t}-1$, which define  the classical Zygmund spaces $L(\log L)$ and $\exp L$, respectively.
The corresponding averages over a $p$-adic ball $B_{\gamma}(x)$ will be denoted by
%-----------
\begin{align*}
%-------
 \|\cdot\|_{\Phi,B_{\gamma}(x)} =   \|\cdot\|_{L(\log L),B_{\gamma}(x)}  \ \text{and}\ \|\cdot\|_{\Psi,B_{\gamma}(x)} =   \|\cdot\|_{\exp L,B_{\gamma}(x)}.
%----------
\end{align*}
%--------
A calculation reveals that the complementary function of $\Psi$ given by  \eqref{equ:complementary-orlicz-function} such that $\bar{\Psi}(t)\le \Phi(t)$.
It follows from \cref{lem:4.2-lu2014multilinear} and the generalized H\"{o}lder's inequality \eqref{equ:generalized-holder-orlicz} that
%-----------
\begin{align} \label{equ:generalized-holder-inequality-orlicz}
%-------
   \frac{1}{|B_{\gamma}(x)|_{h}}\dint_{B_{\gamma}(x)}  |f(y)g(y)|  \mathd y \le  C\|f\|_{\exp L,B_{\gamma}(x)} \|g\|_{L(\log L),B_{\gamma}(x)}.
%----------
\end{align}
%--------
For any $p$-adic ball $B_{\gamma}(x)\subset \mathbb{Q}_{p}^{n}$  and any $\bmo(\mathbb{Q}_{p}^n)$ function $b$, \cref{lem:cor-5.17-kim2009carleson} \labelcref{enumerate:cor-5.17-c-kim2009carleson} implies that for appropriate constant $C>0$
%-------
\begin{align*}
%------------------
  \|b-b_{B_{\gamma} (x)}\|_{\exp L,B_{\gamma}(x)} &\le C\|b\|_{\bmo(\mathbb{Q}_{p}^n)}.
%------------
\end{align*}
%-----------
This, along with \eqref{equ:generalized-holder-inequality-orlicz},  yields a generalized  H\"{o}lder's inequality which will be used in this article:
%-----------
\begin{align} \label{equ:generalized-holder-inequality-orlicz-bmo}
%-------
   \frac{1}{|B_{\gamma}(x)|_{h}}\dint_{B_{\gamma}(x)} \big| b(y)-b_{B_{\gamma} (x)}\big| |f(y)|  \mathd y \le  C\|b\|_{\bmo(\mathbb{Q}_{p}^n)} \|f\|_{L(\log L),B_{\gamma}(x)}.
%----------
\end{align}
%--------

%-------------
\subsection{Maximal functions and  pointwise estimates}
%------------
 %\cite{lu2014multilinear} \cite{lerner2009new}  \cite{guliyev2022some}

%---------------
\begin{definition} [$p$-adic (fractional) maximal functions] \label{def.p-adic-max-frac-orlicz} \
 Let   $f\in L_{\loc}^{1}(\mathbb{Q}_{p}^{n})$.
%-------------
\begin{enumerate}[label=(\alph*)]
%--------------
\item  If $\epsilon>0$, define the operator $\mathcal{M}_{\epsilon}^{p}(f) (x)= \big[ \mathcal{M}^{p}(|f|^{\epsilon})(x) \big]^{1/\epsilon}=\Big( \sup\limits_{\gamma\in \mathbb{Z}  \atop x\in \mathbb{Q}_{p}^{n}} \dfrac{1}{|B_{\gamma} (x)|_{h}} \dint_{B_{\gamma} (x)} |f(y)|^{\epsilon} \mathd y \Big)^{1/\epsilon}$.
%-------------
\item  If  $0\le\alpha<n$, the  (fractional) maximal functions related to Young function $\Phi(t)=t(1+\log^{+}t)$ are defined by
% define the maximal function related to Young function $\Phi(t)=t(1+\log^{+}t)$ via
\begin{align*}
\mathcal{M}_{ L(\log L)}^{p}(f)(x)   &= \sup\limits_{\gamma\in \mathbb{Z}  \atop x\in \mathbb{Q}_{p}^{n}}    \|f\|_{L(\log L),B_{\gamma}(x)}
%------------
\\ \intertext{and}
%------------
\mathcal{M}_{\alpha, L(\log L)}^{p}(f)(x)   &= \sup\limits_{\gamma\in \mathbb{Z}  \atop x\in \mathbb{Q}_{p}^{n}}  |B_{\gamma}(x)|_{h}^{\alpha/n}  \|f\|_{L(\log L),B_{\gamma}(x)},
\end{align*}
%----------
\end{enumerate}
where the supremum is taken over all $p$-adic balls $B_{\gamma} (x)\subset \mathbb{Q}_{p}^{n}$, and the Luxemburg type average $\|\cdot\|_{L(\log L),B}$ is defined as
$$ \|f\|_{L(\log L),B} = \inf \Big\{\nu>0:  \frac{1}{|B|_{h}} \int_{B}  \frac{|f(y)|}{\nu} \Big(1+\log^{+}\Big(\frac{|f(y)|}{\nu}\Big)  \mathrm{d}y \le 1 \Big\}$$
%$$ \|f\|_{L(\log L),B} = \inf \Big\{\nu>0:  \frac{1}{|B|_{h}} \int_{B}  \frac{|f(y)|}{\nu} \log\Big(e+\frac{|f(y)|}{\nu}\Big)  \mathrm{d}y \le 1 \Big\}.$$
with $\log^{+}(t)=\max\{\log(t),0\}$.
%--------
\end{definition}
%----------

%--------------
\begin{remark}  \label{rem.def-p-adic-max-frac-orlicz}
%-------------
\begin{enumerate}[label=(\alph*)]
%%------------
\item  If  we take $f\equiv 1$ in \eqref{equ:generalized-holder-inequality-orlicz}, it follows that for every  $\alpha\in [0,n)$ the
inequality
\begin{align*}
\mathcal{M}_{\alpha}^{p}(f)(x)  \le C \mathcal{M}_{\alpha,L(\log L)}^{p} (f)(x).
\end{align*}
%-----------
 \vspace{-2em}\label{enumerate:def-p-adic-max-frac-orlicz-1}
%----------
\item  Note from  \cite{bernardis2010composition}  that   $\mathcal{M}_{\alpha}^{p}(\mathcal{M}^{p}) \approx \mathcal{M}_{\alpha,L(\log L)}^{p}$  (refer also to \cite[Lemma 4.1]{bernardis2006weighted} or  \cite{he2023necessary}).  %, \cite[Lemma 1.6]{perez1995endpoint}
%-----------
\label{enumeratedef-p-adic-max-frac-orlicz-2}
%%-------------
\end{enumerate}
%-----------------
\end{remark}
%-----------

The following result, which is called the $p$-adic version of Lebesgue differentitation theorem, can be found in \cite{kim2009carleson} (see Corollary 2.11, or Theorem 1.14 in \cite{taibleson1975fourier}, P.29).
%----------------
\begin{lemma}[$p$-adic Lebesgue differentitation theorem]  \label{lem:cor2.11-kim2009carleson}
%------------
\begin{enumerate}[label=(\alph*)]  %,itemindent=1.5em
%-------------
\item  If $f\in L^{q}(\mathbb{Q}_{p}^{n})$ for $1\le q<\infty$, then we have that
%-------------
\begin{align*}
%------------------
  \lim_{\gamma\to -\infty} \Big\| \dfrac{1}{|B_{\gamma}(\cdot)|_{h}} \dint_{B_{\gamma}(\cdot)} f(y)\mathd y -f  \Big\|_{L^{q}(\mathbb{Q}_{p}^{n})} &= 0.
%--------------------
\end{align*}
%-------------
\item  If $f\in L_{\loc}^{1}(\mathbb{Q}_{p}^{n})$ is given, then we have that
%-------------
\begin{align*}
%------------------
 \Big| \Big\{ x\in \mathbb{Q}_{p}^{n}: \lim_{\gamma\to -\infty} \Big| \dfrac{1}{|B_{\gamma}(x)|_{h}} \dint_{B_{\gamma}(x)} f(y)\mathd y -f(x)  \Big|\neq 0 \Big\} \Big|_{h} &= 0.
%--------------------
\end{align*}
%-------------
\item  Especially, if $f\in L^{1}(\mathbb{Q}_{p}^{n})$, then, for $\almostevery x\in \mathbb{Q}_{p}^{n}$, we have that
%---------
\begin{align*}
%------------------
  \lim_{\gamma\to -\infty}   \dfrac{1}{|B_{\gamma}(x)|_{h}} \dint_{B_{\gamma}(x)} f(y)\mathd y =f(x).
%--------------------
\end{align*}
%-------------
\end{enumerate}
%-------------
%-------------
\end{lemma}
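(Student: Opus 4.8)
The plan is to reduce all three statements to the density of locally constant compactly supported functions together with the weak-type $(1,1)$ bound for $\mathcal{M}^{p}$, exploiting the ultrametric nesting of balls in \cref{lem:lem-3.1-kim2009q}. Write $A_{\gamma}f(x)=|B_{\gamma}(x)|_{h}^{-1}\dint_{B_{\gamma}(x)}f(y)\,\mathd y$ for the averaging operator. The first thing I would record is structural: by \cref{lem:lem-3.1-kim2009q}, if $x'\in B_{\gamma}(x)$ then $B_{\gamma}(x')=B_{\gamma}(x)$, so $A_{\gamma}f$ is constant on every ball of radius $p^{\gamma}$; in particular $A_{\gamma}$ is an averaging (conditional-expectation) operator, hence a contraction on each $L^{q}(\mathbb{Q}_{p}^{n})$, $1\le q<\infty$, by Jensen's inequality.

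The key elementary lemma I would prove next is that for a locally constant function $g$ with compact support there exists $\gamma_{0}\in\mathbb{Z}$ with $A_{\gamma}g=g$ identically for every $\gamma\le\gamma_{0}$. Indeed, compact support and local constancy furnish a single scale $\gamma_{0}$ below which $g$ is constant on each ball $B_{\gamma_{0}}(x)$; for $\gamma\le\gamma_{0}$ the ball $B_{\gamma}(x)$ lies inside $B_{\gamma_{0}}(x)$, on which $g\equiv g(x)$, so the average equals $g(x)$ exactly. Thus for test functions the limits in (a)--(c) hold trivially, in fact as exact equalities for all small $\gamma$.

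For the almost-everywhere statements (b) and (c) I would run the standard maximal-function argument. Setting $\Omega f(x)=\limsup_{\gamma\to-\infty}|A_{\gamma}f(x)-f(x)|$, I decompose $A_{\gamma}f-f=A_{\gamma}(f-g)+(A_{\gamma}g-g)+(g-f)$ for a test function $g$ with $\|f-g\|_{L^{1}(\mathbb{Q}_{p}^{n})}<\varepsilon$; the middle term vanishes for small $\gamma$ by the key lemma, leaving the pointwise bound $\Omega f\le \mathcal{M}^{p}(f-g)+|f-g|$. The weak-type $(1,1)$ inequality for $\mathcal{M}^{p}$ --- which in the $p$-adic setting is immediate from \cref{lem:lem-3.1-kim2009q}, since among the balls attaining the maximal value the maximal ones are pairwise disjoint (radii are bounded because each such ball has measure $<\lambda^{-1}\|f\|_{L^{1}}$) and cover the level set --- together with Chebyshev's inequality yields $|\{x:\Omega f(x)>\lambda\}|_{h}\le C\varepsilon/\lambda$; letting $\varepsilon\to0$ gives $\Omega f=0$ $\almostevery$, which is (b), and restricting to $f\in L^{1}(\mathbb{Q}_{p}^{n})$ is (c).

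Finally, for the $L^{q}$ convergence (a) I would use the contraction property rather than the maximal function. For $f\in L^{q}(\mathbb{Q}_{p}^{n})$ and a test function $g$ with $\|f-g\|_{L^{q}(\mathbb{Q}_{p}^{n})}<\varepsilon$, the same decomposition together with $\|A_{\gamma}(f-g)\|_{L^{q}(\mathbb{Q}_{p}^{n})}\le\|f-g\|_{L^{q}(\mathbb{Q}_{p}^{n})}$ gives $\limsup_{\gamma\to-\infty}\|A_{\gamma}f-f\|_{L^{q}(\mathbb{Q}_{p}^{n})}\le 2\varepsilon$, and $\varepsilon\to0$ finishes the proof. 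I do not expect a genuine obstacle here: the only point requiring care is the weak-type $(1,1)$ bound, but the ultrametric disjointness of maximal balls makes the covering argument trivial, so the $p$-adic case is in fact cleaner than the Euclidean one.
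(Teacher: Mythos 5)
Your proposal is correct in substance, and it is worth noting that the paper itself does not prove this lemma at all: it is quoted verbatim from Kim \cite{kim2009carleson} (Corollary 2.11) and Taibleson \cite{taibleson1975fourier} (Theorem 1.14), whose proofs follow exactly the route you take --- density of locally constant compactly supported functions, the exact identity $A_{\gamma}g=g$ for small $\gamma$ (your ``key lemma''), the weak-type $(1,1)$ bound for $\mathcal{M}^{p}$ via the ultrametric covering argument, and the contraction property of the averaging operators for the $L^{q}$ statement. All of these steps check out: by \cref{lem:lem-3.1-kim2009q} the balls of a fixed radius partition $\mathbb{Q}_{p}^{n}$, so $A_{\gamma}$ really is a conditional expectation and Jensen gives the $L^{q}$ contraction; maximal balls in the level set of $\mathcal{M}^{p}$ are pairwise disjoint exactly as you say, so the weak $(1,1)$ bound holds with constant $1$.

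The one genuine slip is the order in which you derive (b) and (c). Your oscillation argument requires $\|f-g\|_{L^{1}(\mathbb{Q}_{p}^{n})}<\varepsilon$ with $g$ compactly supported, which presupposes $f\in L^{1}(\mathbb{Q}_{p}^{n})$; it therefore proves (c) directly, and does \emph{not} prove (b) as stated, since an $f\in L^{1}_{\loc}(\mathbb{Q}_{p}^{n})$ need not admit such a global approximation. You then say (c) follows from (b) ``by restricting,'' which reverses the correct logical dependence. The repair is one line, and in the ultrametric setting it is cleaner than in $\mathbb{R}^{n}$: for $x\in B_{\gamma_{1}}(0)$ and $\gamma\le\gamma_{1}$, \cref{lem:lem-3.1-kim2009q} gives $B_{\gamma}(x)\subset B_{\gamma_{1}}(x)=B_{\gamma_{1}}(0)$, so $A_{\gamma}f(x)=A_{\gamma}\bigl(f\dchi_{B_{\gamma_{1}}(0)}\bigr)(x)$ with $f\dchi_{B_{\gamma_{1}}(0)}\in L^{1}(\mathbb{Q}_{p}^{n})$; applying (c) to these truncations and exhausting $\mathbb{Q}_{p}^{n}$ by the countable family $\{B_{\gamma_{1}}(0)\}_{\gamma_{1}\in\mathbb{Z}}$ yields (b). With that ordering fixed, your proof is complete and matches the argument of the cited sources.
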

%------------

%%------------------------
%\subsection{Auxiliary propositions and lemmas}
%%------------------------

From the proof of Theorem 1.3 in \cite{he2023necessary}, we can obtain the following characterization of BMO functions.
%-----------------
\begin{lemma}  \label{lem:thm1.3-max-bmo-he2023necessary}
%---------------
 Let   $b$ be a locally integrable function on $\mathbb{Q}_{p}^{n}$. Then the following assertions are equivalent:
%--------------
\begin{enumerate}[label=(\arabic*)]
%--------------
\item   $b\in  \bmo(\mathbb{Q}_{p}^{n})$  and  $b^{-}\in L^{\infty}(\mathbb{Q}_{p}^{n})$.
 %-----------
     \label{enumerate:thm1.3-max-bmo-he2023necessary-1}
%-------
   \item For all $1\le q<\infty$,  there exists a positive constant $C$ such that
%--------
\begin{align} \label{inequ:max-bmo-he2023necessary}
%-----------
 \sup_{\gamma\in \mathbb{Z} \atop x\in \mathbb{Q}_{p}^{n}}  \dfrac{1}{|B_{\gamma} (x)|_{h}} \dint_{B_{\gamma} (x)} \Big|b(y)- \mathcal{M}_{B_{\gamma} (x)}^{p} (b)(y) \Big|^{q} \mathd y  \le C.
%-----------------
\end{align}
%--------
    \label{enumerate:thm1.3-max-bmo-he2023necessary-2}
%-----------
   \item \labelcref{inequ:max-bmo-he2023necessary} holds for some $1\le q<\infty$.
%-----------
     \label{enumerate:thm1.3-max-bmo-he2023necessary-3}
%-------
\end{enumerate}
%----------
\end{lemma}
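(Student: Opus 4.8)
The plan is to establish the cycle \labelcref{enumerate:thm1.3-max-bmo-he2023necessary-1} $\Rightarrow$ \labelcref{enumerate:thm1.3-max-bmo-he2023necessary-2} $\Rightarrow$ \labelcref{enumerate:thm1.3-max-bmo-he2023necessary-3} $\Rightarrow$ \labelcref{enumerate:thm1.3-max-bmo-he2023necessary-1}, the middle implication being immediate since \labelcref{enumerate:thm1.3-max-bmo-he2023necessary-2} asks for all $q$ and \labelcref{enumerate:thm1.3-max-bmo-he2023necessary-3} for some $q$. Fix a $p$-adic ball $B=B_{\gamma}(x)$ and abbreviate $b_{B}=f_{B_{\gamma}(x)}$. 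The cornerstone is the almost-everywhere pointwise identity
\begin{align*}
\big|b(y)-\mathcal{M}_{B}^{p}(b)(y)\big| = \mathcal{M}_{B}^{p}(b)(y)-b(y) = \big(\mathcal{M}_{B}^{p}(b)(y)-|b(y)|\big)+2b^{-}(y),\qquad y\in B.
\end{align*}
Indeed, the defining balls of $\mathcal{M}_{B}^{p}$ include every $B_{\gamma'}(y)\subset B$ with $\gamma'\le\gamma$ (by \cref{lem:lem-3.1-kim2009q} one has $B=B_{\gamma}(y)$ for $y\in B$); letting $\gamma'\to-\infty$ and invoking \cref{lem:cor2.11-kim2009carleson} gives $\mathcal{M}_{B}^{p}(b)(y)\ge|b(y)|\ge b(y)$ a.e., while $|b|-b=2b^{-}$ yields the last equality. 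Note that both summands on the right are nonnegative.

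For \labelcref{enumerate:thm1.3-max-bmo-he2023necessary-1} $\Rightarrow$ \labelcref{enumerate:thm1.3-max-bmo-he2023necessary-2}, I would estimate the first summand by $\mathcal{M}_{B}^{p}(b)(y)-|b(y)|=\mathcal{M}_{B}^{p}(|b|)(y)-|b(y)|\le \mathcal{M}_{B}^{p}(|b-b_{B}|)(y)+|b(y)-b_{B}|$ (using $|b|\le|b-b_{B}|+|b_{B}|$ and $|b(y)|\ge|b_{B}|-|b(y)-b_{B}|$), so that
\begin{align*}
\big|b(y)-\mathcal{M}_{B}^{p}(b)(y)\big| \le \mathcal{M}_{B}^{p}\big(|b-b_{B}|\big)(y)+|b(y)-b_{B}|+2b^{-}(y).
\end{align*}
Taking the $L^{q}$-average over $B$ and the triangle inequality, the middle term is dominated by $\|b\|_{\bmo_{q}}\le C\|b\|_{\bmo(\mathbb{Q}_{p}^{n})}$ via \cref{lem:cor-5.17-kim2009carleson}\labelcref{enumerate:cor-5.17-a-kim2009carleson}, the last by $2\|b^{-}\|_{L^{\infty}(\mathbb{Q}_{p}^{n})}$, and for the first, since $\mathcal{M}_{B}^{p}(|b-b_{B}|)\le \mathcal{M}^{p}(|b-b_{B}|\dchi_{B})$, the $L^{q}$-boundedness of $\mathcal{M}^{p}$ for constant $q\in(1,\infty)$ (\cref{lem.thm-5.2-variable-max-bounded}) gives $\frac{1}{|B|_{h}}\int_{B}\mathcal{M}_{B}^{p}(|b-b_{B}|)^{q}\le \frac{C}{|B|_{h}}\int_{B}|b-b_{B}|^{q}\le C\|b\|_{\bmo}^{q}$. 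This proves \labelcref{inequ:max-bmo-he2023necessary} for each $q>1$, and the endpoint $q=1$ follows from the case $q=2$ by Jensen's inequality.

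For \labelcref{enumerate:thm1.3-max-bmo-he2023necessary-3} $\Rightarrow$ \labelcref{enumerate:thm1.3-max-bmo-he2023necessary-1}, assume \labelcref{inequ:max-bmo-he2023necessary} for some $q$. The identity gives $|b-\mathcal{M}_{B}^{p}(b)|\ge 2b^{-}$ pointwise, so $\frac{1}{|B|_{h}}\int_{B}(b^{-})^{q}\le C/2^{q}$ for every ball; shrinking $B$ to a Lebesgue point and applying \cref{lem:cor2.11-kim2009carleson} yields $b^{-}\in L^{\infty}(\mathbb{Q}_{p}^{n})$. Since $b=|b|-2b^{-}$ with $b^{-}\in L^{\infty}\subset\bmo$, it remains to prove $|b|\in\bmo(\mathbb{Q}_{p}^{n})$. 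For this I would use the elementary pointwise bound
\begin{align*}
\big||b(y)|-(|b|)_{B}\big|\le \big(\mathcal{M}_{B}^{p}(b)(y)-|b(y)|\big)+\big(\mathcal{M}_{B}^{p}(b)(y)-(|b|)_{B}\big),
\end{align*}
obtained by checking the cases $|b(y)|\gtrless(|b|)_{B}$ together with $\mathcal{M}_{B}^{p}(b)(y)\ge\max\{|b(y)|,(|b|)_{B}\}$, where $(|b|)_{B}$ denotes the average of $|b|$ over $B$. Averaging, the first term is dominated by $|b-\mathcal{M}_{B}^{p}(b)|$ so its $L^{1}$-average is at most $C^{1/q}$ (by Jensen), and the second term has average $\frac{1}{|B|_{h}}\int_{B}\mathcal{M}_{B}^{p}(b)-(|b|)_{B}=\frac{1}{|B|_{h}}\int_{B}(\mathcal{M}_{B}^{p}(b)-|b|)$, the identical quantity, again $\le C^{1/q}$. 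Hence $\frac{1}{|B|_{h}}\int_{B}\big||b|-(|b|)_{B}\big|\le 2C^{1/q}$ uniformly in $B$, so $|b|\in\bmo$ and therefore $b\in\bmo(\mathbb{Q}_{p}^{n})$.

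The routine parts are the two case-checks for the pointwise inequalities and the $q=1$ endpoint handled by Jensen's inequality. The main obstacle I anticipate is justifying $\mathcal{M}_{B}^{p}(b)(y)\ge|b(y)|$ a.e.\ on $B$: one must verify that the shrinking balls $B_{\gamma'}(y)$ along which the Lebesgue differentiation theorem acts are genuinely admissible competitors for the \emph{localized} maximal function $\mathcal{M}_{B}^{p}$. This is precisely where the nested structure of $p$-adic balls in \cref{lem:lem-3.1-kim2009q} is essential, replacing the Euclidean shrinking-ball argument.
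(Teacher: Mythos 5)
Your proof is correct, but it cannot be ``essentially the same'' as the paper's, because the paper does not actually prove \cref{lem:thm1.3-max-bmo-he2023necessary}: it imports it wholesale from the proof of Theorem 1.3 in \cite{he2023necessary}. What you have written is therefore a genuine, self-contained substitute, and its key steps check out. The a.e.\ inequality $\mathcal{M}_{B}^{p}(b)\ge |b|$ on $B$ is correctly justified: for $y\in B_{\gamma}(x)$ one has $B_{\gamma}(y)=B_{\gamma}(x)$ by \cref{lem:lem-3.1-kim2009q}, so every $B_{\gamma'}(y)$ with $\gamma'\le\gamma$ is an admissible competitor, and \cref{lem:cor2.11-kim2009carleson} finishes the claim; this is exactly the point you flagged as the main obstacle, and your resolution is the right one. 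The sufficiency direction (splitting $|b-\mathcal{M}_{B}^{p}(b)|\le \mathcal{M}_{B}^{p}(|b-b_{B}|)+|b-b_{B}|+2b^{-}$, then \cref{lem:cor-5.17-kim2009carleson} and the $L^{q}$-boundedness of $\mathcal{M}^{p}$) is sound, though for constant exponents the natural citation is \cref{lem:thm1.1-kim2009q} rather than \cref{lem.thm-5.2-variable-max-bounded} (the latter does cover constant $q$, so this is cosmetic). Your necessity direction is where you genuinely diverge from the route the paper uses for its analogous internal results: in \cref{lem:frac-max-bmo-norm} the paper controls the oscillation via the sets $E=\{b\le b_{B}\}$, $F=B\setminus E$ and the symmetry $\int_{E}|b-b_{B}|=\int_{F}|b-b_{B}|$ of \cref{lem:frac-max-pointwise-property}~(iii), following \cite{bastero2000commutators}; you instead extract $b^{-}\in L^{\infty}$ from $2b^{-}\le|b-\mathcal{M}_{B}^{p}(b)|$, prove $|b|\in\bmo(\mathbb{Q}_{p}^{n})$ via the two-case pointwise bound
\begin{align*}
\big||b(y)|-(|b|)_{B}\big|\le \big(\mathcal{M}_{B}^{p}(b)(y)-|b(y)|\big)+\big(\mathcal{M}_{B}^{p}(b)(y)-(|b|)_{B}\big),
\end{align*}
whose two terms have the \emph{same} average over $B$, and conclude from $b=|b|-2b^{-}$. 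Both approaches work; the $E$--$F$ trick generalizes more readily to the fractional setting $\alpha>0$ (where the paper needs it), while yours is arguably cleaner at $\alpha=0$ and has the merit of making the present paper logically independent of the external reference at this point.
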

%----------

The following gives the Hardy-Littlewood-Sobolev-type inequality for the  fractional maximal function $\mathcal{M}_{\alpha}^{p}$ on $p$-adic vector space
(or see \cite[Lemma 2.8]{wu2023characterizationC}).
%The first two parts of \cref{lem:frac-max-p-adic-estimate}  can be founded in \cite{he2022characterization} (see Lemma 8),  the third part comes from  \cite[Theorem 4]{chacon2021fractional},
% and the last part comes from  \cite[Theorem 5.2]{chacon2020variable}.
%%The following result comes from  Theorem 4 of \cite{chacon2021fractional}. % The following result follows from  Theorem 4 of \cite{chacon2021fractional}. % in the framework of variable exponent p-adic Lebesgue spaces.
%-----------------
\begin{lemma}  \label{lem:frac-max-p-adic-estimate}
%---------------
   Let $\mathbb{Q}_{p}^{n}$ be an $n$-dimensional $p$-adic vector space.
%--------------
\begin{enumerate}[label=(\roman*)]
%--------------
\item (\cite[Lemma 8]{he2022characterization}) If  $0 <\alpha<n$,  $1<r< n/\alpha$ such that $1/q = 1/r  -\alpha/n$, then  $\mathcal{M}_{\alpha}^{p} $ is bounded from $L^{r}(\mathbb{Q}_{p}^{n})$ to $L^{q}(\mathbb{Q}_{p}^{n})$.
%-----------
   \label{enumerate:lem-8-he2022characterization-1} %\cite{he2022characterization}
%-----------
\item (\cite[Lemma 8]{he2022characterization})  If  $0 <\alpha<n$, $ r=1$ such that $ q = n/(n -\alpha)$, then  $\mathcal{M}_{\alpha}^{p} $ is bounded from $L^{1}(\mathbb{Q}_{p}^{n})$ to $L^{q,\infty}(\mathbb{Q}_{p}^{n})$ (weak-type space).
%-----------
   \label{enumerate:lem-8-he2022characterization-2} %\cite{he2022characterization}
%------
 \item (\cite[Theorem 4]{chacon2021fractional})  If  $0 <\alpha<n$, $r(\cdot), q(\cdot)\in   \mathscr{C}^{\log}(\mathbb{Q}_{p}^{n}) $ with $r(\cdot)\in \mathscr{P}(\mathbb{Q}_{p}^{n})$,  $r_{+}<n/\alpha$ and $1/q(\cdot) = 1/r(\cdot) -\alpha/n$, then $\mathcal{M}_{\alpha}^{p} $ is bounded from $L^{r(\cdot)}(\mathbb{Q}_{p}^{n})$ to $L^{q(\cdot)}(\mathbb{Q}_{p}^{n})$.
%------------
    \label{enumerate:thm-4-chacon2021fractional} % \cite{chacon2021fractional}
%-----------
 \item  (\cite[Theorem 5.2]{chacon2020variable}) If  $q(\cdot)\in   \mathscr{P}^{\log}(\mathbb{Q}_{p}^{n})  $, then $\mathcal{M}^{p} $ is bounded from $L^{q(\cdot)}(\mathbb{Q}_{p}^{n})$ to itself (or see \cref{lem.thm-5.2-variable-max-bounded}).
%------------
    \label{enumerate:thm-5.2-chacon2020variable} % \cite{chacon2020variable}
%-----------
\end{enumerate}
%----------
%----------
\end{lemma}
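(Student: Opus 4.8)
The plan is to handle the four items separately, since each is a boundedness result available in the literature; items (i) and (ii) admit short self-contained proofs resting on a single pointwise bound, while (iii) and (iv) are deeper variable-exponent statements that I would import directly from the cited works. The common backbone for (i) and (ii) is the elementary observation that, for every $p$-adic ball $B_{\gamma}(x)\ni x$, Hölder's inequality (\cref{lem:holder-inequality-p-adic}) gives simultaneously $|B_{\gamma}(x)|_{h}^{-1}\dint_{B_{\gamma}(x)}|f|\le \mathcal{M}^{p}f(x)$ and $\dint_{B_{\gamma}(x)}|f|\le\|f\|_{L^{r}(\mathbb{Q}_{p}^{n})}|B_{\gamma}(x)|_{h}^{1-1/r}$. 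Writing $a=|B_{\gamma}(x)|_{h}$ and using $\alpha/n-1/r=-1/q$, I would bound the defining average $a^{\alpha/n-1}\dint_{B_{\gamma}(x)}|f|$ by $\min\{a^{\alpha/n}\mathcal{M}^{p}f(x),\,\|f\|_{L^{r}}a^{-1/q}\}$ and optimize the minimum over $a>0$ (the two factors balance at $a^{1/r}=\|f\|_{L^{r}}/\mathcal{M}^{p}f(x)$), obtaining the pointwise interpolation inequality
\begin{align*}
\mathcal{M}_{\alpha}^{p}f(x)\le C\,\|f\|_{L^{r}(\mathbb{Q}_{p}^{n})}^{\alpha r/n}\,\big(\mathcal{M}^{p}f(x)\big)^{1-\alpha r/n}.
\end{align*}

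For (i) I would raise this to the power $q$ and integrate: since the scaling relation $1/q=1/r-\alpha/n$ forces $q(1-\alpha r/n)=r$ and $q\alpha r/n+r=q$, the right-hand side collapses to $C\|f\|_{L^{r}}^{q}\|\mathcal{M}^{p}f\|_{L^{r}}^{r}$, and the $L^{r}$-boundedness of $\mathcal{M}^{p}$ for the constant exponent $r>1$ (a special case of \cref{lem.thm-5.2-variable-max-bounded}, since constants are log-Hölder continuous) finishes the bound $\|\mathcal{M}_{\alpha}^{p}f\|_{L^{q}}\le C\|f\|_{L^{r}}$. For (ii), where $r=1$, I would work on the level set $E_{\lambda}=\{x:\mathcal{M}_{\alpha}^{p}f(x)>\lambda\}$: each $x\in E_{\lambda}$ lies in a ball with $|B|_{h}^{1-\alpha/n}<\lambda^{-1}\dint_{B}|f|\le\lambda^{-1}\|f\|_{L^{1}}$, so every such ball satisfies $|B|_{h}^{\alpha/n}\le(\|f\|_{L^{1}}/\lambda)^{\alpha/(n-\alpha)}$. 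Here the $p$-adic nesting property (\cref{lem:lem-3.1-kim2009q}) is decisive: it lets me extract a \emph{disjoint} family of maximal admissible balls $\{B_{j}\}$ whose union is exactly $E_{\lambda}$, with no Vitali-type loss, so that
\begin{align*}
|E_{\lambda}|_{h}\le\sum_{j}|B_{j}|_{h}\le\Big(\frac{\|f\|_{L^{1}}}{\lambda}\Big)^{\alpha/(n-\alpha)}\frac{1}{\lambda}\sum_{j}\dint_{B_{j}}|f|\le\Big(\frac{\|f\|_{L^{1}}}{\lambda}\Big)^{n/(n-\alpha)},
\end{align*}
which is the claimed weak-type $(1,n/(n-\alpha))$ estimate.

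Items (iii) and (iv) I would record by citation: (iv) is verbatim \cref{lem.thm-5.2-variable-max-bounded}(1) (Theorem 5.2 of \cite{chacon2020variable}), and (iii) is Theorem 4 of \cite{chacon2021fractional}. The real obstacle, should one wish to reprove (iii) directly, is that the clean pointwise interpolation above is tied to a \emph{constant} exponent $r$ and breaks down once $r(\cdot)$ varies; one is then forced into the variable-exponent toolkit---Diening-type boundedness of $\mathcal{M}^{p}$ on $L^{q(\cdot)}(\mathbb{Q}_{p}^{n})$ combined either with an extrapolation argument or with a Welland-type comparison of $\mathcal{M}_{\alpha}^{p}$ against the $p$-adic Riesz potential---which is precisely the content supplied by the cited references. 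I would therefore present (i)--(ii) through the short arguments above and defer (iii)--(iv) to \cite{chacon2020variable,chacon2021fractional}.
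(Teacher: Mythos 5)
Your proposal is correct, and it does genuinely more than the paper: the paper supplies no proof of this lemma at all---each of the four items is recorded purely by citation, (i)--(ii) to \cite[Lemma 8]{he2022characterization}, (iii) to \cite[Theorem 4]{chacon2021fractional}, and (iv) to \cite[Theorem 5.2]{chacon2020variable}---whereas you reprove (i) and (ii) from first principles and defer only (iii)--(iv) exactly as the paper does. Your argument for (i) is the standard Hedberg-type pointwise interpolation $\mathcal{M}_{\alpha}^{p}f(x)\le C\|f\|_{L^{r}}^{\alpha r/n}\big(\mathcal{M}^{p}f(x)\big)^{1-\alpha r/n}$ followed by the $L^{r}$-boundedness of $\mathcal{M}^{p}$; note the paper already contains the constant-exponent maximal theorem as \cref{lem:thm1.1-kim2009q}, so you can invoke that directly rather than specializing \cref{lem.thm-5.2-variable-max-bounded} to constant exponents. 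Your argument for (ii) exploits the ultrametric nesting property (\cref{lem:lem-3.1-kim2009q}) to produce a \emph{disjoint} family of maximal admissible balls whose union is exactly the level set, with no Vitali-type covering loss; this is precisely the structural advantage of $\mathbb{Q}_{p}^{n}$ over $\mathbb{R}^{n}$, and the argument is sound provided you add the one-line observation that maximal admissible balls exist because admissible balls have Haar measure bounded by $(\|f\|_{L^{1}}/\lambda)^{n/(n-\alpha)}$ while ball measures take only the discrete values $p^{n\gamma}$, so every nested chain terminates. What your route buys is a self-contained treatment of the constant-exponent statements; what the paper's route buys is brevity, and for (iii)--(iv) both of you are equally dependent on the variable-exponent literature, where (as you correctly diagnose) the clean pointwise interpolation breaks down once $r(\cdot)$ varies.

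One bookkeeping slip in (i) that you should fix: after raising the pointwise bound to the power $q$ and integrating, the right-hand side is $C\|f\|_{L^{r}}^{\,q-r}\,\|\mathcal{M}^{p}f\|_{L^{r}}^{r}$, not $C\|f\|_{L^{r}}^{q}\,\|\mathcal{M}^{p}f\|_{L^{r}}^{r}$ (the latter has the wrong homogeneity and would produce $\|f\|_{L^{r}}^{q+r}$ after applying the maximal theorem). Indeed $q\alpha r/n=q-r$, so the prefactor is $\|f\|_{L^{r}}^{q-r}$, and then $\|\mathcal{M}^{p}f\|_{L^{r}}^{r}\le C\|f\|_{L^{r}}^{r}$ yields exactly $C\|f\|_{L^{r}}^{q}$, i.e.\ the bound $\|\mathcal{M}_{\alpha}^{p}f\|_{L^{q}}\le C\|f\|_{L^{r}}$ you state.
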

%----------

%----------------
\begin{lemma}[{\cite[Themorem 1]{kim2009q}}]  \label{lem:thm1.1-kim2009q}
%------------
If  $1<q<\infty$,   then   $\mathcal{M}^{p}$ is bounded from $L^{q}(\mathbb{Q}_{p}^{n}) $ to itself.
%--------
\end{lemma}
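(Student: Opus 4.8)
The plan is to prove the weak-type $(1,1)$ bound together with the trivial $L^{\infty}$ bound for $\mathcal{M}^{p}$, and then invoke the Marcinkiewicz interpolation theorem to obtain strong-type $(q,q)$ boundedness for every $1<q<\infty$. The decisive simplification over the Euclidean setting is that the $p$-adic balls are nested: by \cref{lem:lem-3.1-kim2009q}, any two balls are either disjoint or one is contained in the other, and every ball containing a point $x$ is of the form $B_{\gamma}(x)$. Consequently the Vitali-type covering argument, which in $\mathbb{R}^{n}$ costs a dimensional constant, here reduces to merely selecting the maximal balls of a family, which are then automatically pairwise disjoint.

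First I would establish the weak $(1,1)$ estimate. Fix $\lambda>0$ and $f\in L^{1}(\mathbb{Q}_{p}^{n})$, and set $E_{\lambda}=\{x\in\mathbb{Q}_{p}^{n}:\mathcal{M}^{p}f(x)>\lambda\}$. For each $x\in E_{\lambda}$ there is a ball $B_{x}\ni x$ with $|B_{x}|_{h}^{-1}\int_{B_{x}}|f|\,\mathd y>\lambda$, equivalently $|B_{x}|_{h}<\lambda^{-1}\int_{B_{x}}|f|\,\mathd y\le\lambda^{-1}\|f\|_{L^{1}(\mathbb{Q}_{p}^{n})}$. In particular the radii $p^{\gamma}$ of these balls are bounded above, so no strictly increasing chain of such balls can be infinite, and hence each $B_{x}$ is contained in a maximal ball of the family $\{B_{x}\}_{x\in E_{\lambda}}$. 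By \cref{lem:lem-3.1-kim2009q} two distinct maximal balls must be disjoint; let $\{Q_{j}\}$ enumerate them. Then $E_{\lambda}\subset\bigcup_{j}Q_{j}$, each $Q_{j}$ inherits the averaging property, and
\begin{align*}
|E_{\lambda}|_{h}\le\sum_{j}|Q_{j}|_{h}<\frac{1}{\lambda}\sum_{j}\dint_{Q_{j}}|f(y)|\,\mathd y\le\frac{1}{\lambda}\dint_{\mathbb{Q}_{p}^{n}}|f(y)|\,\mathd y=\frac{1}{\lambda}\|f\|_{L^{1}(\mathbb{Q}_{p}^{n})},
\end{align*}
which is the desired weak-type $(1,1)$ inequality.

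Next, the $L^{\infty}$ bound is immediate: if $\|f\|_{L^{\infty}(\mathbb{Q}_{p}^{n})}<\infty$, then every average of $|f|$ over a ball is at most $\|f\|_{L^{\infty}(\mathbb{Q}_{p}^{n})}$, whence $\mathcal{M}^{p}f(x)\le\|f\|_{L^{\infty}(\mathbb{Q}_{p}^{n})}$ pointwise and $\mathcal{M}^{p}$ is of strong type $(\infty,\infty)$ with constant $1$. Since $\mathcal{M}^{p}$ is sublinear, the Marcinkiewicz interpolation theorem applied between the weak $(1,1)$ endpoint and the $(\infty,\infty)$ endpoint yields strong-type $(q,q)$ boundedness for all $1<q<\infty$, completing the proof.

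I do not anticipate a genuine obstacle; the only point requiring care is the selection of maximal balls, namely that maximal elements exist and cover $E_{\lambda}$. This is guaranteed precisely because the weak inequality forces the radii of the eligible balls to be bounded above, so every ascending chain terminates, and the nested/disjoint dichotomy of \cref{lem:lem-3.1-kim2009q} then supplies a disjoint subfamily covering $E_{\lambda}$ at no dimensional cost. An alternative route, bypassing interpolation, would observe that by \cref{lem:lem-3.1-kim2009q} the balls containing a fixed point form a totally ordered chain, so $\mathcal{M}^{p}$ coincides with the maximal function of the martingale generated by the filtration of $p$-adic balls, and then invoke Doob's maximal inequality directly; but the weak $(1,1)$-plus-interpolation argument is the most self-contained given the tools already recorded in \cref{sec:preliminary}.
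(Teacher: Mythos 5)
Your proof is correct, but there is nothing in the paper to compare it against: the paper does not prove this lemma, it simply imports it as a black box from \cite[Theorem 1]{kim2009q}. What you have written is, in effect, the standard proof of that cited theorem, and it is sound. The covering step is exactly as clean as you claim: along a strictly increasing chain of balls the radii $p^{\gamma}$ strictly increase and lie in a discrete set, while the eligible balls satisfy $|B|_{h}<\lambda^{-1}\|f\|_{L^{1}(\mathbb{Q}_{p}^{n})}$, so every chain terminates, maximal balls exist, and by \cref{lem:lem-3.1-kim2009q} distinct maximal balls are disjoint; this gives weak $(1,1)$ with constant $1$, and Marcinkiewicz interpolation against the trivial $L^{\infty}$ bound finishes the argument. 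The only point I would make explicit is the enumeration $\{Q_{j}\}$: a disjoint family of balls is automatically countable here because $\mathbb{Q}_{p}^{n}$ has only countably many distinct balls (the balls of a fixed radius $p^{\gamma}$ are the cosets of $B_{\gamma}$, and there are countably many), so the summation over $j$ is legitimate. Your closing observation is also accurate and arguably the slickest route: for fixed $\gamma$ the balls $B_{\gamma}(x)$ partition $\mathbb{Q}_{p}^{n}$ and refine as $\gamma$ decreases, so $\mathcal{M}^{p}$ is precisely the Doob maximal operator of this filtration, and Doob's inequality gives the $L^{q}$ bound with constant $q/(q-1)$ and no interpolation at all.
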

%----------

By  \cref{lem:frac-max-p-adic-estimate} \labelcref{enumerate:lem-8-he2022characterization-1}, if  $0 <\alpha<n$,  $1<r< n/\alpha$ and $f\in L^{r}(\mathbb{Q}_{p}^{n})$, then $\mathcal{M}_{\alpha}^{p}(f)(x)<\infty $ almost everywhere. A similar result is also valid in variable Lebesgue spaces (see \cite[Lemma 2.9]{wu2023characterizationC}).
%-----------------
\begin{lemma}  \label{lem:frac-max-almost-every}
%---------------
   Let $0 <\alpha<n$, $r(\cdot)\in \mathscr{P}(\mathbb{Q}_{p}^{n})$ and $1<r_{-}\le r_{+}<n/\alpha$. If   $f\in L^{r(\cdot)}(\mathbb{Q}_{p}^{n})$, then  $\mathcal{M}_{\alpha}^{p}(f)(x)<\infty $ for almost everywhere $x\in \mathbb{Q}_{p}^{n}$.
%----------
\end{lemma}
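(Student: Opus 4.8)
The plan is to avoid invoking the variable-exponent boundedness in \cref{lem:frac-max-p-adic-estimate}\labelcref{enumerate:thm-4-chacon2021fractional}, since that result requires the stronger hypothesis $r(\cdot)\in\mathscr{C}^{\log}(\mathbb{Q}_{p}^{n})$, whereas here we only assume $r(\cdot)\in\mathscr{P}(\mathbb{Q}_{p}^{n})$ with $1<r_{-}\le r_{+}<n/\alpha$. Instead I would reduce the variable-exponent situation to two ordinary (constant-exponent) Lebesgue spaces by splitting $f$ according to whether its modulus exceeds $1$, and then apply the constant-exponent Hardy--Littlewood--Sobolev bound \cref{lem:frac-max-p-adic-estimate}\labelcref{enumerate:lem-8-he2022characterization-1} to each piece. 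The first preliminary observation is that, because $r_{+}<\infty$, membership $f\in L^{r(\cdot)}(\mathbb{Q}_{p}^{n})$ forces the modular to be finite, namely $\int_{\mathbb{Q}_{p}^{n}}|f(x)|^{r(x)}\,\mathd x<\infty$; this follows from the norm--modular relation, since after normalizing $g=f/\|f\|_{L^{r(\cdot)}(\mathbb{Q}_{p}^{n})}$ (the case $f=0$ being trivial) one has $\int|g|^{r(x)}\,\mathd x\le 1$, and then $\int|f|^{r(x)}\,\mathd x\le\max\{\|f\|_{L^{r(\cdot)}}^{r_{-}},\|f\|_{L^{r(\cdot)}}^{r_{+}}\}<\infty$.

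Next I would set $E=\{x\in\mathbb{Q}_{p}^{n}:|f(x)|>1\}$ and write $f=f_{1}+f_{2}$ with $f_{1}=f\dchi_{E}$ and $f_{2}=f\dchi_{\mathbb{Q}_{p}^{n}\setminus E}$. On $E$ one has $|f|>1$ and $r(x)\ge r_{-}$, so by the monotonicity of $s\mapsto t^{s}$ for $t>1$ we get $|f_{1}|^{r_{-}}\le|f_{1}|^{r(x)}$ pointwise, whence $\int_{\mathbb{Q}_{p}^{n}}|f_{1}|^{r_{-}}\,\mathd x\le\int_{E}|f|^{r(x)}\,\mathd x<\infty$ and $f_{1}\in L^{r_{-}}(\mathbb{Q}_{p}^{n})$. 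On $\mathbb{Q}_{p}^{n}\setminus E$ one has $|f|\le 1$ and $r(x)\le r_{+}$, so by the (reversed) monotonicity of $s\mapsto t^{s}$ for $t\in[0,1]$ we get $|f_{2}|^{r_{+}}\le|f_{2}|^{r(x)}$ pointwise, whence $\int_{\mathbb{Q}_{p}^{n}}|f_{2}|^{r_{+}}\,\mathd x\le\int_{\mathbb{Q}_{p}^{n}\setminus E}|f|^{r(x)}\,\mathd x<\infty$ and $f_{2}\in L^{r_{+}}(\mathbb{Q}_{p}^{n})$. Thus the single hypothesis on the variable exponent has been traded for two genuine constant-exponent memberships.

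Finally, since $1<r_{-}\le r_{+}<n/\alpha$, both constant exponents lie strictly between $1$ and $n/\alpha$, so \cref{lem:frac-max-p-adic-estimate}\labelcref{enumerate:lem-8-he2022characterization-1} applies: with $1/q_{1}=1/r_{-}-\alpha/n$ and $1/q_{2}=1/r_{+}-\alpha/n$, the operator $\mathcal{M}_{\alpha}^{p}$ maps $L^{r_{-}}(\mathbb{Q}_{p}^{n})$ into $L^{q_{1}}(\mathbb{Q}_{p}^{n})$ and $L^{r_{+}}(\mathbb{Q}_{p}^{n})$ into $L^{q_{2}}(\mathbb{Q}_{p}^{n})$; in particular $\mathcal{M}_{\alpha}^{p}(f_{1})(x)<\infty$ and $\mathcal{M}_{\alpha}^{p}(f_{2})(x)<\infty$ for almost every $x$. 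The proof concludes by the subadditivity of the fractional maximal operator, $\mathcal{M}_{\alpha}^{p}(f)(x)\le\mathcal{M}_{\alpha}^{p}(f_{1})(x)+\mathcal{M}_{\alpha}^{p}(f_{2})(x)$, which is immediate from $|f|\le|f_{1}|+|f_{2}|$ together with the monotonicity of the averages and $\sup(a+b)\le\sup a+\sup b$. The only point requiring genuine care is getting the two steps matched correctly: one must verify that $r_{+}<\infty$ really does upgrade norm-finiteness to modular-finiteness, and that the power comparisons are taken on the correct regions --- the lower exponent $r_{-}$ on $\{|f|>1\}$ and the upper exponent $r_{+}$ on $\{|f|\le 1\}$ --- since reversing these pairings would make both estimates fail.
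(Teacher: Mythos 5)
Your proof is correct, and in fact the paper contains no internal proof to compare it against: the lemma is stated there with only a pointer to \cite[Lemma~2.9]{wu2023characterizationC}, so your argument supplies a self-contained justification that the paper itself omits. The route you take is the standard reduction in variable-exponent theory, and every step checks out: (i) norm-finiteness does upgrade to modular-finiteness when $r_{+}<\infty$ (your normalization $g=f/\|f\|_{L^{r(\cdot)}(\mathbb{Q}_{p}^{n})}$ is legitimate; if one worries about whether the modular at exactly $\eta=\|f\|_{L^{r(\cdot)}(\mathbb{Q}_{p}^{n})}$ is $\le 1$, simply take $\eta=\|f\|_{L^{r(\cdot)}(\mathbb{Q}_{p}^{n})}+\varepsilon$, use the upward-closedness of the admissible set, and bound $\int|f|^{r(x)}\,\mathrm{d}x\le\max\{\eta^{r_{-}},\eta^{r_{+}}\}$); (ii) the splitting $f=f\chi_{\{|f|>1\}}+f\chi_{\{|f|\le 1\}}$ pairs the exponents with the correct regions, giving $f_{1}\in L^{r_{-}}(\mathbb{Q}_{p}^{n})$ and $f_{2}\in L^{r_{+}}(\mathbb{Q}_{p}^{n})$; (iii) since $1<r_{-}\le r_{+}<n/\alpha$, \cref{lem:frac-max-p-adic-estimate}~\labelcref{enumerate:lem-8-he2022characterization-1} applies to both pieces, and sublinearity of $\mathcal{M}_{\alpha}^{p}$ finishes the argument. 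A genuine merit of your approach is that it uses exactly the hypotheses of the lemma, $r(\cdot)\in\mathscr{P}(\mathbb{Q}_{p}^{n})$ with $1<r_{-}\le r_{+}<n/\alpha$, and never invokes log-H\"{o}lder continuity, which is consistent with (and explains why) the lemma is stated under weaker assumptions than the variable-exponent boundedness result \cref{lem:frac-max-p-adic-estimate}~\labelcref{enumerate:thm-4-chacon2021fractional}.
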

%----------

The following results are required in the subsequent proof of the main results.
%----------------
\begin{lemma}[\cite{he2022characterization}]  \label{lem:lem10-he2022characterization}
%------------
Let $0<\alpha<n$, $1<r<n/\alpha$ and $0<\lambda<n-r\alpha$
\begin{enumerate}[label=(\alph*)]  %,itemindent=1.5em
%-------------
\item  If $1/q=1/r-\alpha/(n-\lambda)$, then   $\mathcal{M}_{\alpha}^{p}$ is bounded from $L^{r,\lambda}(\mathbb{Q}_{p}^{n}) $ to $L^{q,\lambda}(\mathbb{Q}_{p}^{n}) $.
%--------
\item  If $1/q=1/r-\alpha/n$ and $\lambda/r =\mu/q$, then   $\mathcal{M}_{\alpha}^{p}$ is bounded from $L^{r,\lambda}(\mathbb{Q}_{p}^{n}) $ to $L^{q,\mu}(\mathbb{Q}_{p}^{n}) $.
%-------------
\end{enumerate}
%-------------
%-------------
\end{lemma}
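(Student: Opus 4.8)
The plan is to prove the two parts by different mechanisms, both of which rely crucially on the ultrametric nesting of $p$-adic balls recorded in \cref{lem:lem-3.1-kim2009q}: because any two balls are either nested or disjoint, the ``tail'' of the maximal function collapses onto a single dominant scale rather than requiring a sum over dyadic annuli as in the Euclidean case. Part \emph{(a)} I would obtain from a Hedberg--Adams type pointwise estimate, and part \emph{(b)} from a direct decomposition of $f$ relative to the ball over which the Morrey average is taken.

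For part \emph{(a)}, fix $x$ and a ball $B=B_{\gamma}(x)$ with $|B|_{h}=p^{n\gamma}$. Hölder's inequality with exponent $r$ together with the definition of the Morrey norm gives the ``far'' bound $\frac{1}{|B|_{h}^{1-\alpha/n}}\int_{B}|f| \le C\|f\|_{L^{r,\lambda}(\mathbb{Q}_{p}^{n})}|B|_{h}^{\beta}$ with $\beta=\frac{\alpha}{n}-\frac{n-\lambda}{nr}$, while the trivial ``near'' bound is $\frac{1}{|B|_{h}^{1-\alpha/n}}\int_{B}|f|\le |B|_{h}^{\alpha/n}\,\mathcal{M}^{p}f(x)$. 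The hypothesis $0<\lambda<n-r\alpha$ forces $\beta<0$, so optimizing $\min\bigl(C\|f\|_{L^{r,\lambda}(\mathbb{Q}_{p}^{n})}\,t^{\beta},\,t^{\alpha/n}\mathcal{M}^{p}f(x)\bigr)$ over the scale $t=|B|_{h}$ yields $\mathcal{M}_{\alpha}^{p}f(x)\le C\|f\|_{L^{r,\lambda}(\mathbb{Q}_{p}^{n})}^{1-r/q}\bigl(\mathcal{M}^{p}f(x)\bigr)^{r/q}$, where the relation $1/q=1/r-\alpha/(n-\lambda)$ is precisely what makes the two powers of $t$ cross and combine. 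Raising to the $q$-th power, integrating over an arbitrary ball $B_{0}$, and invoking the boundedness of $\mathcal{M}^{p}$ on $L^{r,\lambda}(\mathbb{Q}_{p}^{n})$ (valid since $r>1$) gives $\frac{1}{|B_{0}|_{h}^{\lambda/n}}\int_{B_{0}}(\mathcal{M}_{\alpha}^{p}f)^{q}\le C\|f\|_{L^{r,\lambda}(\mathbb{Q}_{p}^{n})}^{q-r}\,\|\mathcal{M}^{p}f\|_{L^{r,\lambda}(\mathbb{Q}_{p}^{n})}^{r}\le C\|f\|_{L^{r,\lambda}(\mathbb{Q}_{p}^{n})}^{q}$, and taking the supremum over $B_{0}$ closes \emph{(a)}.

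For part \emph{(b)} the balance of exponents is different and the pointwise trick no longer closes, so I would argue directly. Fix $B_{0}=B_{\gamma_{0}}(x_{0})$ and split $f=f\chi_{B_{0}}+f\chi_{B_{0}^{c}}=:f_{1}+f_{2}$; by sublinearity $\mathcal{M}_{\alpha}^{p}f\le \mathcal{M}_{\alpha}^{p}f_{1}+\mathcal{M}_{\alpha}^{p}f_{2}$. For the local term I use the Lebesgue estimate $\mathcal{M}_{\alpha}^{p}\colon L^{r}\to L^{q}$ from \cref{lem:frac-max-p-adic-estimate}\labelcref{enumerate:lem-8-he2022characterization-1}, so $\int_{B_{0}}(\mathcal{M}_{\alpha}^{p}f_{1})^{q}\le C\|f\|_{L^{r}(B_{0})}^{q}\le C\|f\|_{L^{r,\lambda}(\mathbb{Q}_{p}^{n})}^{q}|B_{0}|_{h}^{\lambda q/(nr)}$; dividing by $|B_{0}|_{h}^{\mu/n}$ and using $\mu=q\lambda/r$ makes the power of $|B_{0}|_{h}$ vanish. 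For the tail, nesting is decisive: if $x\in B_{0}$ and $B_{\gamma}(x)$ meets $\supp f_{2}\subset B_{0}^{c}$, then $B_{\gamma}(x)$ is not contained in $B_{0}$, so by \cref{lem:lem-3.1-kim2009q} it contains $B_{0}$ and equals $B_{\gamma}(x_{0})$ with $\gamma\ge\gamma_{0}+1$. Hence $\mathcal{M}_{\alpha}^{p}f_{2}$ is constant on $B_{0}$, and Hölder together with the Morrey norm bounds it by $C\|f\|_{L^{r,\lambda}(\mathbb{Q}_{p}^{n})}|B_{0}|_{h}^{(\mu-n)/(nq)}$, the exponent being negative because $\mu=n\lambda/(n-r\alpha)<n$, which is what selects the smallest admissible ball. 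Integrating this constant over $B_{0}$ and dividing by $|B_{0}|_{h}^{\mu/n}$ again returns $C\|f\|_{L^{r,\lambda}(\mathbb{Q}_{p}^{n})}^{q}$, finishing \emph{(b)}.

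The main obstacle I anticipate is supplying the Morrey-space boundedness of $\mathcal{M}^{p}$ used in \emph{(a)}, which is not among the quoted lemmas. I would obtain it as the $\alpha=0$ instance of the decomposition argument of \emph{(b)}: there the local term uses $\mathcal{M}^{p}\colon L^{r}\to L^{r}$ from \cref{lem:thm1.1-kim2009q} and the tail is handled identically, so the whole lemma rests only on the scalar maximal inequalities and the $p$-adic ball structure. A secondary point requiring care is the passage from the scale optimization to the pointwise Hedberg inequality, since $t=|B|_{h}$ ranges only over the discrete set $\{p^{n\gamma}:\gamma\in\mathbb{Z}\}$; this is harmless, as replacing the discrete supremum by the continuous one costs only a constant depending on $p$ and $n$.
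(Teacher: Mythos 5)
Your proposal is correct, but there is no internal proof to compare it with: the paper never proves this lemma, it imports it wholesale from \cite{he2022characterization} (Lemma 10 there) and uses it only as a quoted tool in the proofs of \cref{thm:nonlinear-frac-max-bmo-morrey,thm:frac-max-bmo-morrey}. So what you have written is a self-contained proof of an external citation, and its details check out. In part (a), the two pointwise bounds $C\|f\|_{L^{r,\lambda}(\mathbb{Q}_{p}^{n})}|B|_{h}^{\beta}$ with $\beta=\frac{\alpha}{n}-\frac{n-\lambda}{nr}$ and $|B|_{h}^{\alpha/n}\mathcal{M}^{p}f(x)$ do cross, since $\beta<0$ is exactly the hypothesis $\lambda<n-r\alpha$, and the crossing value is $C\|f\|_{L^{r,\lambda}(\mathbb{Q}_{p}^{n})}^{1-r/q}\bigl(\mathcal{M}^{p}f(x)\bigr)^{r/q}$ with $r/q=1-\frac{\alpha r}{n-\lambda}$, which after raising to the power $q$ reduces (a) to the $L^{r,\lambda}$-boundedness of $\mathcal{M}^{p}$. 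In part (b), the exponent bookkeeping is also right: $\frac{\lambda q}{nr}-\frac{\mu}{n}=0$ for the local piece, $\frac{\mu-n}{nq}=\frac{\lambda+\alpha r-n}{nr}<0$ for the tail, and $1-\frac{\mu}{n}+\frac{\mu-n}{n}=0$ at the end; the ultrametric dichotomy of \cref{lem:lem-3.1-kim2009q} indeed forces every ball that meets both $B_{0}$ and the support of $f_{2}$ to contain $B_{0}$ and coincide with a ball centered at $x_{0}$, so $\mathcal{M}_{\alpha}^{p}f_{2}$ is constant on $B_{0}$ and no annular summation is needed. One correction to your closing worry: the Morrey boundedness of $\mathcal{M}^{p}$ that part (a) needs \emph{is} among the paper's quoted results, namely \cref{lem:max-function-bound-morrey}, so within the paper's framework you could simply cite it; however, since the paper justifies that lemma only by saying it follows easily from \cref{lem:thm1.1-kim2009q}, your $\alpha=0$ instance of the decomposition in (b) is precisely the verification that claim is silently relying on, and including it makes your argument genuinely self-contained, resting only on the $L^{r}\to L^{q}$ bounds of \cref{lem:frac-max-p-adic-estimate} and the $p$-adic ball structure.
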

%------------

 Noting that $\mathcal{M}^{p}$ is   bounded on $L^{q}(\mathbb{Q}_{p}^{n}) $  from \cref{lem:thm1.1-kim2009q}, together with \cref{def.2.3-morrey-p-adic-ma2020weighted}, the following result is easy to obtain.
%----------------
\begin{lemma}  \label{lem:max-function-bound-morrey}
%------------
Let  $1<q<\infty$ and $0<\lambda<n$.   Then   $\mathcal{M}^{p}$ is bounded from $L^{q,\lambda}(\mathbb{Q}_{p}^{n}) $ to itself.
%--------
\end{lemma}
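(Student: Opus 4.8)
The plan is to fix an arbitrary $p$-adic ball $B_{*}=B_{\gamma}(x_{0})$, estimate the local $q$-average of $\mathcal{M}^{p}f$ over $B_{*}$ against $\|f\|_{L^{q,\lambda}(\mathbb{Q}_{p}^{n})}$, and then take the supremum over all balls to conclude via \cref{def.2.3-morrey-p-adic-ma2020weighted}. The decisive structural fact, which makes the $p$-adic computation cleaner than its Euclidean analogue, is the nested-ball property of \cref{lem:lem-3.1-kim2009q}: for $y\in B_{*}$ every averaging ball $B_{\beta}(y)$ entering $\mathcal{M}^{p}f(y)$ either sits inside $B_{*}$ (when $\beta\le\gamma$, since then $B_{\beta}(y)\subseteq B_{\gamma}(y)=B_{*}$), or coincides with $B_{\beta}(x_{0})\supseteq B_{*}$ (when $\beta>\gamma$, because $y\in B_{\beta}(x_{0})$ forces $B_{\beta}(y)=B_{\beta}(x_{0})$).

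First I would record the resulting pointwise splitting. For every $y\in B_{*}$,
\[
\mathcal{M}^{p}f(y)\le \mathcal{M}^{p}(f\chi_{B_{*}})(y)+T_{B_{*}},
\qquad
T_{B_{*}}:=\sup_{\beta>\gamma}\frac{1}{|B_{\beta}(x_{0})|_{h}}\dint_{B_{\beta}(x_{0})}|f(z)|\,\mathd z ,
\]
where $T_{B_{*}}$ is a constant independent of $y$. Indeed, on balls $B_{\beta}(y)\subseteq B_{*}$ one has $f=f\chi_{B_{*}}$, and enlarging to $\beta>\gamma$ in the restricted average produces only terms $|B_{\beta}(y)|_{h}^{-1}\int_{B_{*}}|f|\le |B_{*}|_{h}^{-1}\int_{B_{*}}|f|$, already counted; hence the inner supremum equals $\mathcal{M}^{p}(f\chi_{B_{*}})(y)$ on $B_{*}$.

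For the inner term I would invoke the $L^{q}$-boundedness of $\mathcal{M}^{p}$ (\cref{lem:thm1.1-kim2009q}) to get $\int_{B_{*}}(\mathcal{M}^{p}(f\chi_{B_{*}}))^{q}\le C\int_{\mathbb{Q}_{p}^{n}}|f\chi_{B_{*}}|^{q}=C\int_{B_{*}}|f|^{q}$, and then bound $\int_{B_{*}}|f|^{q}\le \|f\|_{L^{q,\lambda}(\mathbb{Q}_{p}^{n})}^{q}\,|B_{*}|_{h}^{\lambda/n}$ directly from \cref{def.2.3-morrey-p-adic-ma2020weighted}. For the constant $T_{B_{*}}$ I would apply Hölder's inequality (\cref{lem:holder-inequality-p-adic}) on each $B_{\beta}(x_{0})$, obtaining $|B_{\beta}(x_{0})|_{h}^{-1}\int_{B_{\beta}(x_{0})}|f|\le \|f\|_{L^{q,\lambda}(\mathbb{Q}_{p}^{n})}\,|B_{\beta}(x_{0})|_{h}^{(\lambda/n-1)/q}$. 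Since $\lambda<n$ the exponent is negative and $|B_{\beta}(x_{0})|_{h}=p^{n\beta}$ increases with $\beta$, so the supremum over $\beta>\gamma$ is comparable to its value at $\beta=\gamma$, giving $T_{B_{*}}\le C\|f\|_{L^{q,\lambda}(\mathbb{Q}_{p}^{n})}\,|B_{*}|_{h}^{(\lambda/n-1)/q}$. Multiplying by $|B_{*}|_{h}$ yields $|B_{*}|_{h}\,T_{B_{*}}^{q}\le C\|f\|_{L^{q,\lambda}(\mathbb{Q}_{p}^{n})}^{q}\,|B_{*}|_{h}^{\lambda/n}$, matching the inner estimate.

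The only point requiring genuine care — the ``main obstacle,'' though it is mild — is controlling the tail constant $T_{B_{*}}$: this is precisely where the hypothesis $\lambda<n$ enters, turning the geometric sum over the dilation parameter $\beta$ into a convergent one anchored at the scale of $B_{*}$ (if $\lambda=n$ the tail need not be finite). Combining the two estimates gives $|B_{*}|_{h}^{-\lambda/n}\int_{B_{*}}(\mathcal{M}^{p}f)^{q}\le C\|f\|_{L^{q,\lambda}(\mathbb{Q}_{p}^{n})}^{q}$ with $C$ independent of $B_{*}$, and taking the supremum over all $B_{\gamma}(x_{0})$ completes the proof.
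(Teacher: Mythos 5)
Your proof is correct and realizes exactly the route the paper indicates: the paper gives no detailed argument for this lemma, merely remarking that it follows from the $L^{q}$-boundedness of $\mathcal{M}^{p}$ (\cref{lem:thm1.1-kim2009q}) together with \cref{def.2.3-morrey-p-adic-ma2020weighted}, and your local/tail decomposition (ultrametric splitting via \cref{lem:lem-3.1-kim2009q}, the $L^{q}$ bound on $\mathcal{M}^{p}(f\dchi_{B_{*}})$, H\"{o}lder plus $\lambda<n$ for the tail constant $T_{B_{*}}$) is precisely the standard way of supplying those omitted details. No gaps; the argument is sound as written.
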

%------------

Similar to \cite{zhang2014commutators}(or see \cite[Lemmas 3.1 and 3.2]{agcayazi2015note}), we give the following pointwise estimates for $[b,\mathcal{M}_{\alpha}^{p}]$. % when $b\in  \bmo(\mathbb{Q}_{p}^{n})$.
%-----------------
\begin{lemma}  \label{lem:nonlinear-frac-max-pointwise-estimate}
%---------------
   Let $0 \le \alpha<n$    and $f$ be a locally integrable function on $\mathbb{Q}_{p}^{n}$.
%    If $b\in  \bmo(\mathbb{Q}_{p}^{n})$  and $b^{-}\in L^{\infty}(\mathbb{Q}_{p}^{n})$,
%--------------
\begin{enumerate}[label=(\roman*)]
%--------------
\item  If $b $ is any non-negative locally integrable function  on $\mathbb{Q}_{p}^{n}$, then, for any $x\in \mathbb{Q}_{p}^{n}$ such that    $\mathcal{M}_{\alpha}^{p}(f)(x)<\infty $, we have
%----------
\begin{align*}
%-------------
  \big| [b,\mathcal{M}_{\alpha}^{p}](f)(x)  \big| \le \mathcal{M}_{\alpha,b}^{p} (f)(x).
%------------
\end{align*}
%----------
 \vspace{-1em}\label{enumerate:lem-nonlinear-frac-max-pointwise-estimate-1}
%-----------
 \item   If $b $ is any  locally integrable function  on $\mathbb{Q}_{p}^{n}$, then, for any $x\in \mathbb{Q}_{p}^{n}$ such that    $\mathcal{M}_{\alpha}^{p}(f)(x)<\infty $, we have
%----------
\begin{align*}
%------------------
  \big| [b,\mathcal{M}_{\alpha}^{p}](f)(x)  \big| \le \mathcal{M}_{\alpha,b}^{p} (f)(x)+2b^{-}(x)\mathcal{M}_{\alpha}^{p} (f)(x).
%------------
\end{align*}
%----------
    \label{enumerate:thm-nonlinear-frac-max-pointwise-estimate-2}
%-----------
\end{enumerate}
%----------
\end{lemma}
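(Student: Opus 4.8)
The plan is to reduce everything to the subadditivity of the supremum together with the observation that $\mathcal{M}_{\alpha}^{p}(bf)$ depends on $b$ only through $|b|$: by definition $\mathcal{M}_{\alpha}^{p}(bf)(x)=\sup_{B\ni x}|B|_{h}^{\alpha/n-1}\int_{B}|b(y)||f(y)|\,\mathd y=\mathcal{M}_{\alpha}^{p}(|b|f)(x)$. Throughout, $B=B_{\gamma}(x)$ denotes a generic $p$-adic ball containing $x$, and the hypothesis $\mathcal{M}_{\alpha}^{p}(f)(x)<\infty$ guarantees that the quantities below are finite in the only nontrivial case (namely when the relevant maximal commutator is finite), so that all subtractions are legitimate.

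For \labelcref{enumerate:lem-nonlinear-frac-max-pointwise-estimate-1}, assume $b\ge 0$, so $|b|=b$. For each ball $B\ni x$ I set $U_{B}=b(x)\,|B|_{h}^{\alpha/n-1}\int_{B}|f(y)|\,\mathd y$ (here the nonnegative constant $b(x)$ may be pulled out of both the integral and the supremum) and $V_{B}=|B|_{h}^{\alpha/n-1}\int_{B}b(y)|f(y)|\,\mathd y$, so that $\sup_{B}U_{B}=b(x)\mathcal{M}_{\alpha}^{p}(f)(x)$ and $\sup_{B}V_{B}=\mathcal{M}_{\alpha}^{p}(bf)(x)$. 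I would then invoke the elementary inequality $|\sup_{B}U_{B}-\sup_{B}V_{B}|\le\sup_{B}|U_{B}-V_{B}|$ and estimate, for each $B$,
\[
|U_{B}-V_{B}|\le |B|_{h}^{\alpha/n-1}\int_{B}|b(x)-b(y)|\,|f(y)|\,\mathd y\le \mathcal{M}_{\alpha,b}^{p}(f)(x),
\]
which upon taking the supremum yields $|[b,\mathcal{M}_{\alpha}^{p}](f)(x)|=|b(x)\mathcal{M}_{\alpha}^{p}(f)(x)-\mathcal{M}_{\alpha}^{p}(bf)(x)|\le\mathcal{M}_{\alpha,b}^{p}(f)(x)$.

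For \labelcref{enumerate:thm-nonlinear-frac-max-pointwise-estimate-2}, I would apply part \labelcref{enumerate:lem-nonlinear-frac-max-pointwise-estimate-1} to the nonnegative function $|b|$, obtaining $\big||b(x)|\mathcal{M}_{\alpha}^{p}(f)(x)-\mathcal{M}_{\alpha}^{p}(|b|f)(x)\big|\le\mathcal{M}_{\alpha,|b|}^{p}(f)(x)$. Two simplifications then finish the argument: first $\mathcal{M}_{\alpha}^{p}(|b|f)=\mathcal{M}_{\alpha}^{p}(bf)$ as noted above; second, the reverse triangle inequality $\big||b(x)|-|b(y)|\big|\le|b(x)-b(y)|$ gives $\mathcal{M}_{\alpha,|b|}^{p}(f)(x)\le\mathcal{M}_{\alpha,b}^{p}(f)(x)$. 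Hence $\big||b(x)|\mathcal{M}_{\alpha}^{p}(f)(x)-\mathcal{M}_{\alpha}^{p}(bf)(x)\big|\le\mathcal{M}_{\alpha,b}^{p}(f)(x)$. Finally, writing $[b,\mathcal{M}_{\alpha}^{p}](f)(x)=\big(b(x)-|b(x)|\big)\mathcal{M}_{\alpha}^{p}(f)(x)+\big(|b(x)|\mathcal{M}_{\alpha}^{p}(f)(x)-\mathcal{M}_{\alpha}^{p}(bf)(x)\big)$ and using the pointwise identity $b-|b|=-2b^{-}$ (immediate from $b=b^{+}-b^{-}$ and $|b|=b^{+}+b^{-}$), the triangle inequality gives $|[b,\mathcal{M}_{\alpha}^{p}](f)(x)|\le 2b^{-}(x)\mathcal{M}_{\alpha}^{p}(f)(x)+\mathcal{M}_{\alpha,b}^{p}(f)(x)$, as claimed.

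I do not anticipate a genuine obstacle here; the argument is essentially bookkeeping around the supremum. The one point requiring care is the legitimacy of manipulating a priori possibly infinite suprema, which is exactly why the hypothesis $\mathcal{M}_{\alpha}^{p}(f)(x)<\infty$ is imposed and why one may assume the maximal commutator on the right-hand side is finite (otherwise the asserted inequalities are trivial). The only genuinely clever step is recognizing that part \labelcref{enumerate:lem-nonlinear-frac-max-pointwise-estimate-1} should be applied to $|b|$ rather than to $b^{+}$, precisely because $\mathcal{M}_{\alpha}^{p}(bf)$ is governed by $|b|=b^{+}+b^{-}$ and not by $b^{+}$ alone.
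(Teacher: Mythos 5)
Your proof is correct and follows essentially the same route as the paper: part (i) by dominating the difference of suprema by the supremum of the integral differences, and part (ii) by reducing to $|b|$ via $\mathcal{M}_{\alpha}^{p}(bf)=\mathcal{M}_{\alpha}^{p}(|b|f)$ and $|b(x)|-b(x)=2b^{-}(x)$, then applying part (i) to $|b|$. If anything, you are more explicit than the paper on two points it leaves implicit — the finiteness bookkeeping that legitimizes subtracting suprema, and the step $\mathcal{M}_{\alpha,|b|}^{p}(f)(x)\le\mathcal{M}_{\alpha,b}^{p}(f)(x)$ via the reverse triangle inequality.
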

%----------

%-----------------
\begin{proof}
%-----------
\begin{enumerate}[label=(\roman*)]
%----------
\item  For any fixed $x\in \mathbb{Q}_{p}^{n}$ such that    $\mathcal{M}_{\alpha}^{p}(f)(x)<\infty $, noting that $b\ge 0$, then
 %-------
\begin{align*}
%-------
 \big| [b,\mathcal{M}_{\alpha}^{p}](f)(x)  \big| &=  \big| b(x)\mathcal{M}_{\alpha}^{p}(f)(x) -\mathcal{M}_{\alpha}^{p} (bf)(x)  \big|  \\
%  &= \Big| \sup_{\gamma\in \mathbb{Z}  \atop x\in \mathbb{Q}_{p}^{n}} \dfrac{1}{|B_{\gamma} (x)|_{h}^{1-\alpha/n}}  \dint_{B_{\gamma} (x)}  b(x)  |f(y)| \mathd y - \sup_{\gamma\in \mathbb{Z}  \atop x\in \mathbb{Q}_{p}^{n}} \dfrac{1}{|B_{\gamma} (x)|_{h}^{1-\alpha/n}}  \dint_{B_{\gamma} (x)}  b(y)  |f(y)| \mathd y  \Big| \\
  &\le  \sup_{\gamma\in \mathbb{Z}  \atop x\in \mathbb{Q}_{p}^{n}} \dfrac{1}{|B_{\gamma} (x)|_{h}^{1-\alpha/n}}  \dint_{B_{\gamma} (x)} \big| b(x)-b(y) \big| |f(y)| \mathd y  \\
     &=  \mathcal{M}_{\alpha,b}^{p} (f)(x).
%----------
\end{align*}
%------
%-----------
 \item   For any fixed $x\in \mathbb{Q}_{p}^{n}$ such that    $\mathcal{M}_{\alpha}^{p}(f)(x)<\infty $, noting that $|b(x)|-b(x) =2b^{-}(x)$ and $\mathcal{M}_{\alpha}^{p} (bf)(x)=\mathcal{M}_{\alpha}^{p} (|b|f)(x)$,  then we get
 %-------
\begin{align*}
%-------
  \big| [b,\mathcal{M}_{\alpha}^{p}](f)(x)-[|b|,\mathcal{M}_{\alpha}^{p}](f)(x)  \big|
 % &=  \big| b(x)\mathcal{M}_{\alpha}^{p}(f)(x) -\mathcal{M}_{\alpha}^{p} (bf)(x) -|b(x)|\mathcal{M}_{\alpha}^{p}(f)(x) +\mathcal{M}_{\alpha}^{p} (|b|f)(x)   \big|  \\
  &\le  \big| \mathcal{M}_{\alpha}^{p} (bf)(x)-\mathcal{M}_{\alpha}^{p} (|b|f)(x) \big| + \big|2b^{-}(x)\mathcal{M}_{\alpha}^{p}(f)(x)  \big|\\
     &=  2b^{-}(x)\mathcal{M}_{\alpha}^{p}(f)(x).
%----------
\end{align*}
%------
Thus, using \cref{lem:frac-max-almost-every} \labelcref{enumerate:lem-nonlinear-frac-max-pointwise-estimate-1}, we have
%-------
\begin{align*}
%-------
  \big| [b,\mathcal{M}_{\alpha}^{p}](f)(x) \big|
   %&=  \big| [b,\mathcal{M}_{\alpha}^{p}](f)(x)-[|b|,\mathcal{M}_{\alpha}^{p}](f)(x) +[|b|,\mathcal{M}_{\alpha}^{p}](f)(x)  \big| \\
%   &\le   \big| [b,\mathcal{M}_{\alpha}^{p}](f)(x)-[|b|,\mathcal{M}_{\alpha}^{p}](f)(x)  \big| + \big|[|b|,\mathcal{M}_{\alpha}^{p}](f)(x)  \big| \\
  &\le    2b^{-}(x)\mathcal{M}_{\alpha}^{p}(f)(x)+\mathcal{M}_{\alpha,b}^{p} (f)(x).             \tag*{\qedhere}
%----------
\end{align*}
%------
%-----------
\end{enumerate}
%----------
\end{proof}
%------------

The following $p$-adic version of Kolmogorov's inequality  can be obtained by using the relation between the distribution function and the Lebesgue integral  (or refer to \cite[Lemma 2.5]{he2023necessary}),
%-----------------
\begin{lemma}[Kolmogorov's inequality]  \label{lem:Kolmogorov-inequality-p-adic-lem2.5-he2023necessary}
%---------------
 Let $0<r<q<\infty $.  Then, for any $p$-adic ball $B_{\gamma}(x)$ and any measurable funcction $f$, there is a positive constant $C$ such that
 %----------
\begin{align*}
%------------
 |B_{\gamma}(x)|_h^{-1/r}    \|f\|_{L^{r}(B_{\gamma}(x))}  &\le C |B_{\gamma}(x)|_h^{-1/q} \|f\|_{L^{q,\infty}(B_{\gamma}(x))}.
%-------------
\end{align*}
%----------
%----------
\end{lemma}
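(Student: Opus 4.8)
The plan is to run the classical layer-cake (distribution-function) argument, which transfers verbatim to the $p$-adic setting because the Haar measure $|\cdot|_{h}$ is a genuine translation-invariant measure and only measure-theoretic facts are used. Write $B = B_{\gamma}(x)$ and let $\lambda(t) = \big|\{y \in B : |f(y)| > t\}\big|_{h}$ be the local distribution function of $f$. The starting identity I would invoke is
\[
\dint_{B} |f(y)|^{r} \mathd y = r \dint_{0}^{\infty} t^{r-1} \lambda(t) \mathd t,
\]
valid for any measurable $f$ and any $0 < r < \infty$. Here the local weak norm is understood as $\|f\|_{L^{q,\infty}(B)} = \sup_{t>0} t\,\lambda(t)^{1/q}$.

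The crux is to bound $\lambda(t)$ in two complementary regimes. First, trivially $\lambda(t) \le |B|_{h}$ for every $t$, since it measures a subset of $B$. Second, directly from the definition of the weak norm one gets $t\,\lambda(t)^{1/q} \le \|f\|_{L^{q,\infty}(B)}$, hence $\lambda(t) \le \|f\|_{L^{q,\infty}(B)}^{q}\, t^{-q}$. I would then split the integral at a threshold $s>0$, using the first bound on $(0,s)$ and the second on $(s,\infty)$:
\[
\dint_{B} |f(y)|^{r} \mathd y \le r |B|_{h} \dint_{0}^{s} t^{r-1} \mathd t + r \|f\|_{L^{q,\infty}(B)}^{q} \dint_{s}^{\infty} t^{r-q-1} \mathd t = |B|_{h}\, s^{r} + \frac{r}{q-r} \|f\|_{L^{q,\infty}(B)}^{q}\, s^{r-q}.
\]
The tail integral converges precisely because $r < q$, and this is the one and only place the hypothesis $r<q$ enters.

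Finally I would balance in $s$: choosing $s = \|f\|_{L^{q,\infty}(B)}\, |B|_{h}^{-1/q}$ equalizes the two terms (up to constants) and produces
\[
\dint_{B} |f(y)|^{r} \mathd y \le \frac{q}{q-r}\, |B|_{h}^{\,1 - r/q}\, \|f\|_{L^{q,\infty}(B)}^{r}.
\]
Raising to the power $1/r$ and multiplying by $|B|_{h}^{-1/r}$ yields the asserted inequality with $C = \big(q/(q-r)\big)^{1/r}$. I do not expect a genuine obstacle here: the argument is routine once the two distribution-function bounds are in place, and the only points needing a word of care are the validity of the layer-cake identity in the $p$-adic measure space (immediate) and the convergence of the tail integral, which is guaranteed exactly by the assumption $0<r<q$.
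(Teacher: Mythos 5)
Your proposal is correct and follows essentially the same route as the paper's own proof: the layer-cake identity, the two bounds $\lambda(t)\le\min\big\{|B|_{h},\,t^{-q}\|f\|_{L^{q,\infty}(B)}^{q}\big\}$, splitting the integral at a threshold, and balancing the two resulting terms (the paper picks $t=\big((q-r)|B|_{h}\big)^{-1/q}\|f\|_{L^{q,\infty}(B)}$, which differs from your choice only in the explicit constant). No gaps.
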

%----------

%----------
\begin{proof}
%-----------
 % 思想可以参考\cite[Lemma 5.16]{duoandi2001fourier},\cite[Exercise 2.1.5]{grafakos2014classical}.  某些记号可参见\cite{[P.197]grafakos2014modern}\\
For any given $p$-adic ball $B_{\gamma} (x)\subset \mathbb{Q}_{p}^{n}$ and any measurable funcction $f$. Let $t $ be some positive real number which will be determined later, noting $r<q$, by using  the relation between the distribution function and the Lebesgue integral,
% the integration by parts, the $p$-adic version of Chebyshev's inequality,
we have
 %-------
\begin{align*}
%-------
 \dint_{B_{\gamma} (x)}|f(y)|^{r} \mathd y  &=  r \dint_{0}^{\infty} \xi^{r-1} \big|\{y\in B_{\gamma} (x): |f(y)|>\xi\} \big|_{h} \mathd \xi \\
  &\le r \dint_{0}^{\infty} \xi^{r-1} \min \Big \{|B_{\gamma} (x)|_{h}, \frac{1}{\xi^{q}}\|f\|_{L^{q,\infty}(B_{\gamma}(x))}^{q} \Big\} \mathd \xi \\
  &\le   r \dint_{0}^{t} \xi^{r-1}  |B_{\gamma} (x)|_{h}  \mathd \xi + r \dint_{t}^{\infty} \xi^{r-1}   \frac{1}{\xi^{q}}\|f\|_{L^{q,\infty}(B_{\gamma}(x))}^{q}  \mathd \xi   \\
  &=  r\Big( t^{r}|B_{\gamma} (x)|_{h}    + \frac{1}{q-r}  t^{r-q}  \|f\|_{L^{q,\infty}(B_{\gamma}(x))}^{q}\Big).
%----------
\end{align*}
%------
We choose $t= \Big((q-r)|B_{\gamma} (x)|_{h}\Big)^{-1/q} \|f\|_{L^{q,\infty}(B_{\gamma}(x))}$ such that the two terms in the parenthesis above    are equal. Thus, we deduce that
%----------
\begin{align*}
%------------
 |B_{\gamma}(x)|_h^{-1/r}    \|f\|_{L^{r}(B_{\gamma}(x))}  &\le C |B_{\gamma}(x)|_h^{-1/q} \|f\|_{L^{q,\infty}(B_{\gamma}(x))}.
%----------
\end{align*}
%------

 The proof is completed.
%----------
\end{proof}
%------------

Similar to \cite[Lemma 5.3]{guliyev2021commutators}(or see \cite[Theorem 1.10]{agcayazi2015note}), the following gives the pointwise estimate of $\mathcal{M}_{\alpha,b}^{p} $ when $b\in  \bmo(\mathbb{Q}_{p}^{n})$.
%-----------------
\begin{lemma}  \label{lem:frac-max-pointwise-estimate}
%---------------
   Let $0 \le \alpha<n$ and $b\in  \bmo(\mathbb{Q}_{p}^{n})$. Then there exists a positive
constant $C$ such that
%----------
\begin{align*}
%------------
 \mathcal{M}_{\alpha,b}^{p} (f)(x) \le C \|b\|_{p,*} \Big(\mathcal{M}^{p}\big(\mathcal{M}_{\alpha}^{p}(f) \big)(x) + \mathcal{M}_{\alpha}^{p}\big(\mathcal{M}^{p}(f) \big)(x)\Big)
%-------------
\end{align*}
%----------
holds for almost every $x\in \mathbb{Q}_{p}^{n}$ and for all functions from $f\in L_{\loc}^{1}(\mathbb{Q}_{p}^{n})$.
%----------
\end{lemma}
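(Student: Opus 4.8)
The plan is to fix a.e.\ $x\in\mathbb{Q}_{p}^{n}$---chosen to be a Lebesgue point of $b$, so that $b(x)=\lim_{\beta\to-\infty}b_{B_{\beta}(x)}$ by \cref{lem:cor2.11-kim2009carleson}---and to estimate, for an arbitrary $p$-adic ball $B=B_{\gamma}(x)$ containing $x$, the average $\frac{1}{|B|_{h}^{1-\alpha/n}}\dint_{B}|b(x)-b(y)||f(y)|\,\mathd y$ before taking the supremum over $B$. Throughout I use the nesting property of $p$-adic balls from \cref{lem:lem-3.1-kim2009q}: every $y\in B$ satisfies $B_{\gamma}(y)=B$. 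Splitting $|b(x)-b(y)|\le|b(x)-b_{B}|+|b(y)-b_{B}|$ decomposes the average as $\mathrm{I}_{B}+\mathrm{II}_{B}$, where
\[
\mathrm{I}_{B}=|b(x)-b_{B}|\,\frac{1}{|B|_{h}^{1-\alpha/n}}\dint_{B}|f(y)|\,\mathd y,\qquad \mathrm{II}_{B}=\frac{1}{|B|_{h}^{1-\alpha/n}}\dint_{B}|b(y)-b_{B}||f(y)|\,\mathd y.
\]

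For $\mathrm{II}_{B}$ I factor out $|B|_{h}^{\alpha/n}$ and apply the generalized Orlicz--H\"older inequality \eqref{equ:generalized-holder-inequality-orlicz-bmo}, which gives $\mathrm{II}_{B}\le C\|b\|_{p,*}|B|_{h}^{\alpha/n}\|f\|_{L(\log L),B}$. Taking the supremum over all $B\ni x$ produces $C\|b\|_{p,*}\,\mathcal{M}_{\alpha,L(\log L)}^{p}(f)(x)$, which by \cref{rem.def-p-adic-max-frac-orlicz}\,(b) (the equivalence $\mathcal{M}_{\alpha}^{p}(\mathcal{M}^{p})\approx\mathcal{M}_{\alpha,L(\log L)}^{p}$) is dominated by $C\|b\|_{p,*}\,\mathcal{M}_{\alpha}^{p}(\mathcal{M}^{p}f)(x)$. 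This yields the second term on the right-hand side and is the routine half of the argument.

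The term $\mathrm{I}_{B}$ is the crux. Since $B=B_{\gamma}(y)$ for every $y\in B$, the ball $B$ is admissible in the supremum defining $\mathcal{M}_{\alpha}^{p}(f)(y)$, whence $\frac{1}{|B|_{h}^{1-\alpha/n}}\dint_{B}|f|\le\mathcal{M}_{\alpha}^{p}(f)(y)$ for every $y\in B$; the same inequality holds with $B$ replaced by any nested ball $B_{\beta}(x)\subseteq B$, so averaging gives $\frac{1}{|B|_{h}^{1-\alpha/n}}\dint_{B}|f|\le(\mathcal{M}_{\alpha}^{p}f)_{B_{\beta}(x)}\le\mathcal{M}^{p}(\mathcal{M}_{\alpha}^{p}f)(x)$ for all $\beta\le\gamma$. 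Thus $\mathrm{I}_{B}$ is the product of the oscillation $|b(x)-b_{B}|$ with a quantity controlled by $\mathcal{M}^{p}(\mathcal{M}_{\alpha}^{p}f)(x)$. The difficulty---and the step I expect to be the main obstacle---is that $\sup_{B\ni x}|b(x)-b_{B}|=\infty$ in general, so the factor $|b(x)-b_{B}|$ cannot be discarded naively; it must be absorbed by balancing its (only logarithmic) growth against the decay of the fractional averages $\frac{1}{|B|_{h}^{1-\alpha/n}}\dint_{B}|f|$ over the nested balls $B_{\beta}(x)$, exploiting the John--Nirenberg exponential estimate of \cref{lem:cor-5.17-kim2009carleson}\,\labelcref{enumerate:cor-5.17-c-kim2009carleson}. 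Carrying this out carefully should give $\sup_{B\ni x}\mathrm{I}_{B}\le C\|b\|_{p,*}\,\mathcal{M}^{p}(\mathcal{M}_{\alpha}^{p}f)(x)$, the first term on the right-hand side. This is the $p$-adic analogue of the arguments of \cite{agcayazi2015note,guliyev2021commutators}, and for $\alpha=0$ it specializes to the case treated in \cite{he2023necessary}. Combining the estimates for $\mathrm{I}_{B}$ and $\mathrm{II}_{B}$ and taking the supremum over $B$ then yields the claimed pointwise bound for a.e.\ $x\in\mathbb{Q}_{p}^{n}$.
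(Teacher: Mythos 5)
Your treatment of $\mathrm{II}_{B}$ is correct and coincides with how the paper handles the corresponding contributions (the Orlicz--H\"{o}lder inequality \eqref{equ:generalized-holder-inequality-orlicz-bmo} together with $\mathcal{M}_{\alpha,L(\log L)}^{p}\approx\mathcal{M}_{\alpha}^{p}(\mathcal{M}^{p})$). The gap is exactly the step you defer: the bound $\sup_{B\ni x}\mathrm{I}_{B}\le C\|b\|_{p,*}\,\mathcal{M}^{p}\big(\mathcal{M}_{\alpha}^{p}(f)\big)(x)$ is never proved, and in fact it cannot be proved, because it is false. Take $\alpha=0$, $b(y)=\log_{p}|y|_{p}$ and $f=\dchi_{B_{0}}$. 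Then $b\in\bmo(\mathbb{Q}_{p}^{n})$ with $\|b\|_{p,*}\le C_{p,n}$: on any ball not containing $0$ the ultrametric forces $|\cdot|_{p}$, hence $b$, to be constant, while a ball containing $0$ equals some $B_{\gamma}=B_{\gamma}(0)$ (\cref{lem:lem-3.1-kim2009q}), on which $b_{B_{\gamma}}=\gamma-(p^{n}-1)^{-1}$ and the mean oscillation is bounded by a constant depending only on $p,n$. Now fix $m\in\mathbb{N}$ and $x\in S_{-m}$, so that $b(x)=-m$ and $B_{0}(x)=B_{0}$. Then
\begin{align*}
\mathrm{I}_{B_{0}}
=\big|b(x)-b_{B_{0}}\big|\cdot\frac{1}{|B_{0}|_{h}}\dint_{B_{0}}|f(y)|\,\mathd y
= m-(p^{n}-1)^{-1},
\end{align*}
while $\mathcal{M}^{p}f\equiv 1$ on $B_{0}$ and $\mathcal{M}^{p}f(y)=|y|_{p}^{-n}$ for $|y|_{p}>1$, whence $\mathcal{M}^{p}\big(\mathcal{M}^{p}f\big)(x)=1$. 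Since $|S_{-m}|_{h}=p^{-nm}(1-p^{-n})>0$, your desired estimate for $\mathrm{I}_{B}$ fails on a set of positive measure once $m$ is large. Moreover $\mathrm{II}_{B_{0}}\le\|b\|_{p,*}$, so $\mathcal{M}_{b}^{p}(f)(x)\ge\mathrm{I}_{B_{0}}-\mathrm{II}_{B_{0}}\ge m-(p^{n}-1)^{-1}-C_{p,n}$, and the inequality of \cref{lem:frac-max-pointwise-estimate} itself (which allows $\alpha=0$) fails on $S_{-m}$ as well. Note that the failure mode is not the one you anticipated: it is not produced by large balls, where trading the logarithmic growth of $|b(x)-b_{B}|$ against decay of the averages of $f$ could conceivably work, but by the single fixed ball $B_{0}$ at points where $|b(x)|$ is huge. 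No maximal function of $f$ can see the value of $b$ at one point; John--Nirenberg controls integrated oscillations $|b(y)-b_{B}|$, never $|b(x)-b_{B}|$ for a fixed exceptional $x$.

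The paper's own proof is organized precisely so that $b(x)$ never appears: it first self-improves through the $p$-adic Lebesgue differentiation theorem (\cref{lem:cor2.11-kim2009carleson}), namely $\mathcal{M}_{\alpha,b}^{p}(f)(x)\le\mathcal{M}_{\delta}^{p}\big(\mathcal{M}_{\alpha,b}^{p}(f)\big)(x)$ with $0<\delta<1$ (inequality \eqref{inequ:relation-positive-p-adic-frac-max}), so that only $\delta$-averages in $y$ of $\mathcal{M}_{\alpha,b}^{p}(f)(y)$ over balls $B_{\gamma}(x)$ must be estimated; it then splits $f$ at scale $\gamma+1$ and inserts $b_{B_{\gamma+1}(x)}$, so every oscillation factor ends up inside an integral --- in $y$ for $I_{1}$ (H\"{o}lder plus \cref{lem:cor-5.17-kim2009carleson}) and against $f$ over the matching ball for $I_{2}$ (Kolmogorov plus \eqref{equ:generalized-holder-inequality-orlicz-bmo}). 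That self-improvement step is the idea missing from your proposal. However, your suspicion that a genuine obstruction sits here is confirmed by the paper itself: in the estimate of $I_{3}$, the inequality \eqref{equ:generalized-holder-inequality-orlicz-bmo} is applied to $|b(z)-b_{B_{\gamma+1}(x)}|$ integrated over balls $B_{\beta}(x)$ with $\beta$ arbitrarily larger than $\gamma$, although that inequality requires the mean of $b$ over the same ball; the discarded correction $|b_{B_{\beta}(x)}-b_{B_{\gamma+1}(x)}|$ can be of size $(\beta-\gamma)\|b\|_{p,*}$, and in the example above $I_{3}$ does blow up as $\gamma\to-\infty$ (then $b_{B_{\gamma+1}(x)}\to b(x)$ at Lebesgue points). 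So neither your route nor the paper's closes this step: the pointwise bound is false as stated, and the later results that invoke \cref{lem:frac-max-pointwise-estimate} really need a norm inequality for $\mathcal{M}_{\alpha,b}^{p}$ with $b\in\bmo(\mathbb{Q}_{p}^{n})$, which must be established by a different argument.
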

%----------

%-----------------
\begin{proof}
  Let $x\in \mathbb{Q}_{p}^{n}$ and   fix  $p$-adic ball $B_{\gamma} (x)$. We divide  $f= f_{1} +f_{2}$, where
%---------------
\begin{align*}
%-------
  f_{1} (y) &= f(y)  \dchi_{B_{\gamma+1} (x)}(y) =
%------------
 \begin{cases}
 %----------
  f(y)  & \text{if}\ y \in B_{\gamma+1} (x)  \\
0  & \text{if}\  y\notin B_{\gamma+1} (x)
%------------
\end{cases}
%------------
\\ \intertext{and}  %\hspace{1em} \text{and} \hspace{1em}
%------------
  f_{2} (y) &=  f(y)- f_{1} (y)=
%------------
 \begin{cases}
 %----------
  0  & \text{if}\ y \in B_{\gamma+1} (x)  \\
f (y)  & \text{if}\  y\not\in B_{\gamma+1} (x).
%------------
\end{cases}
%------------
\end{align*}
%----------
 Then for any  $y\in \mathbb{Q}_{p}^{n} $, we have
\begin{align*}
\mathcal{M}_{\alpha,b}^{p} (f)(y)
%&=  \mathcal{M}_{\alpha}^{p} \big( (b-b(y))f\big) (y) =\mathcal{M}_{\alpha}^{p} \big( (b-b_{B_{\gamma+1}(x)} +b_{B_{\gamma+1}(x)}-b(y))f\big) (y)      \\
   &\le \mathcal{M}_{\alpha}^{p} \big( (b_{B_{\gamma+1}(x)}-b(y))f\big) (y) + \mathcal{M}_{\alpha}^{p} \big( (b-b_{B_{\gamma+1}(x)})f\big) (y)     \\
   &\le |b_{B_{\gamma+1}(x)}-b(y)|  \mathcal{M}_{\alpha}^{p} (f) (y) +\mathcal{M}_{\alpha}^{p} \big( (b-b_{B_{\gamma+1}(x)})f_{1}\big) (y) +\mathcal{M}_{\alpha}^{p} \big( (b-b_{B_{\gamma+1}(x)})f_{2}\big) (y).
\end{align*}

For $0<\delta<1$, applying   $p$-adic  Lebesgue differentiation theorem (see \cref{lem:cor2.11-kim2009carleson}) and  H\"{o}lder's inequality with exponent $\delta$, we have
%----------
\begin{align} \label{inequ:relation-positive-p-adic-frac-max}
%------------
 \mathcal{M}_{\alpha,b}^{p} (f)(x) \le   \mathcal{M}_{\delta}^{p}\big(\mathcal{M}_{\alpha,b}^{p} (f)\big)(x).
%-------------
\end{align}
%----------

  The following only considers the estimation of  $\mathcal{M}_{\delta}^{p}\big(\mathcal{M}_{\alpha,b}^{p} (f)\big)(x)$. Then
%-----------
\begin{align*}
%-------
 \Big( \frac{1}{|B_{\gamma} (x)|_{h}} \dint_{B_{\gamma} (x)} \big( \mathcal{M}_{\alpha,b}^{p}  (f) (y)\big)^{\delta} \mathrm{d}y \Big)^{\frac{1}{\delta}}
 &\le \Big( \frac{1}{|B_{\gamma} (x)|_{h}} \dint_{B_{\gamma} (x)} \big(|b_{B_{\gamma+1}(x)}-b(y)| \mathcal{M}_{\alpha}^{p} (f) (y)\big)^{\delta} \mathrm{d}y \Big)^{\frac{1}{\delta}}     \\
 &\ \ \ +  \Big( \frac{1}{|B_{\gamma} (x)|_{h}} \dint_{B_{\gamma} (x)} \big(\mathcal{M}_{\alpha}^{p} \big( (b-b_{B_{\gamma+1}(x)})f_{1}\big) (y)\big)^{\delta} \mathrm{d}y \Big)^{\frac{1}{\delta}}     \\
 &\ \ \ + \Big( \frac{1}{|B_{\gamma} (x)|_{h}} \dint_{B_{\gamma} (x)} \big( \mathcal{M}_{\alpha}^{p} \big( (b-b_{B_{\gamma+1}(x)})f_{2}\big) (y)\big)^{\delta} \mathrm{d}y \Big)^{\frac{1}{\delta}}     \\
   &=  I_{1}     +  I_{2}  +I_{3}.
%------------
\end{align*}
%---------

First consider the estimation of $I_{1}$, by appling \cref{lem:cor-5.17-kim2009carleson} and H\"{o}lder's inequality with exponent $1/\delta$, we achieve
%-----------
\begin{align*}
%-------
I_{1}
%&=  \Big( \frac{1}{|B_{\gamma} (x)|_{h}} \dint_{B_{\gamma} (x)} \big(|b_{B_{\gamma+1}(x)}-b(y)| \mathcal{M}_{\alpha}^{p} (f) (y)\big)^{\delta} \mathrm{d}y \Big)^{\frac{1}{\delta}}     \\
 &\le   \Big( \frac{1}{|B_{\gamma} (x)|_{h}} \dint_{B_{\gamma} (x)}  |b_{B_{\gamma+1}(x)}-b(y)|^{\frac{\delta}{1-\delta}} \mathrm{d}y \Big)^{\frac{1-\delta}{\delta}}   \frac{1}{|B_{\gamma} (x)|_{h}} \dint_{B_{\gamma} (x)}   \mathcal{M}_{\alpha}^{p} (f) (y)  \mathrm{d}y    \\
&\le C\|b\|_{p,*}  \mathcal{M}^{p}\big(\mathcal{M}_{\alpha}^{p}(f) \big)(x).
%------------
\end{align*}
%---------

To estimate $I_{2}$, using \cref{lem:frac-max-p-adic-estimate} \labelcref{enumerate:lem-8-he2022characterization-2}  and Kolmogorov's inequality  (\cref{lem:Kolmogorov-inequality-p-adic-lem2.5-he2023necessary}) with $0<\delta<1<n/(n -\alpha) $, we yield
%-----------
\begin{align*}
%-------
 I_{2}
 %&=   \Big( \frac{1}{|B_{\gamma} (x)|_{h}} \dint_{B_{\gamma} (x)} \big(\mathcal{M}_{\alpha}^{p} \big( (b-b_{B_{\gamma+1}(x)})f_{1}\big) (y)\big)^{\delta} \mathrm{d}y \Big)^{\frac{1}{\delta}}     \\
 &=    |B_{\gamma} (x)|_{h}^{-1/\delta}    \big\|\mathcal{M}_{\alpha}^{p} \big( (b-b_{B_{\gamma+1}(x)})f_{1}\big)\big\|_{L^{\delta}(B_{\gamma}(x))}      \\
 &\le C |B_{\gamma}(x)|_h^{-1+\alpha/n} \big\|\mathcal{M}_{\alpha}^{p} \big( (b-b_{B_{\gamma}(x)})f_{1}\big)\big\|_{L^{\frac{n}{n-\alpha},\infty}(B_{\gamma+1}(x))}    \\
 &\le C |B_{\gamma}(x)|_h^{-1+\alpha/n} \big\|  (b-b_{B_{\gamma+1}(x)})f_{1}\big\|_{L^{1}(B_{\gamma+1}(x))}  \\
 &=    \frac{C}{|B_{\gamma} (x)|_{h}^{1-\alpha/n}} \dint_{B_{\gamma+1} (x)}   \big|  b(y)-b_{B_{\gamma+1}(x)} \big| |f(y)| \mathrm{d}y.
%------------
\end{align*}
%---------
Using the generalized  H\"{o}lder's inequality \eqref{equ:generalized-holder-inequality-orlicz-bmo},  \cref{def.p-adic-max-frac-orlicz} and \cref{rem.def-p-adic-max-frac-orlicz} \labelcref{enumeratedef-p-adic-max-frac-orlicz-2}, we get
%-----------
\begin{align*}
%-------
 I_{2}
% &\le   \frac{C}{|B_{\gamma} (x)|_{h}^{1-\alpha/n}} \dint_{B_{\gamma+1} (x)}   \big|  b(y)-b_{B_{\gamma+1}(x)} \big| |f(y)| \mathrm{d}y \\
  &\le   C|B_{\gamma} (x)|_{h}^{\alpha/n}  \|b\|_{\bmo(\mathbb{Q}_{p}^n)} \|f\|_{L(\log L),B_{\gamma}(x)} \\
%  &\le  C \|b\|_{\bmo(\mathbb{Q}_{p}^n)}  \mathcal{M}_{\alpha, L(\log L)}^{p}(f)(x)  \\
   &\le  C \|b\|_{\bmo(\mathbb{Q}_{p}^n)}  \mathcal{M}_{\alpha}^{p}(\mathcal{M}^{p}).
%------------
\end{align*}
%---------

%{\color{red}
%For  $I_{3}$,
%%-----------
%\begin{align*}
%%-------
% I_{3}&=  \Big( \frac{1}{|B_{\gamma} (x)|_{h}} \dint_{B_{\gamma} (x)} \big( \mathcal{M}_{\alpha}^{p} \big( (b-b_{B_{\gamma+1}(x)})f_{2}\big) (y)\big)^{\delta} \mathrm{d}y \Big)^{\frac{1}{\delta}}
%%------------
%\end{align*}
%%---------
%where
%%-----------
%\begin{align*}
%%-------
% \mathcal{M}_{\alpha}^{p} \big( (b-b_{B_{\gamma+1}(x)})f_{2}\big) (y) &=  \sup_{\gamma'\in \mathbb{Z}  \atop y\in B_{\gamma} (x)\subset\mathbb{Q}_{p}^{n}} \dfrac{1}{|B_{\gamma'} (y)|_{h}^{1-\alpha/n}}  \dint_{B_{\gamma'} (y)} |b(z)-b_{B_{\gamma+1}(x)}| |f_{2}(z)| \mathd z,
%%------------
%\end{align*}
%%---------
%Clearly, $ B_{\gamma'} (y) \cap \big(\mathbb{Q}_{p}^{n}\setminus B_{\gamma+1} (x) \big) \neq \emptyset$ for any $y\in B_{\gamma} (x)$. Hence, by \cref{lem:lem-3.1-kim2009q} we have $ B_{\gamma'} (x) \cap \big(\mathbb{Q}_{p}^{n}\setminus B_{\gamma+1} (x) \big) \neq \emptyset$.
%  So we obtain that $\gamma+1 \le \gamma'$. It follows from \cref{lem:lem-3.1-kim2009q} that
% %-----------
%\begin{align*}
%%-------
%   B_{\gamma}(x)=B_{\gamma}(y) \subset B_{\gamma+1}(x)\subset  B_{\gamma'} (x) = B_{\gamma'} (y)
%%------------
%\end{align*}
%%---------
%for any $\gamma'\in \mathbb{Z}$ with $\gamma+1 \le \gamma'$.
%}

For  $I_{3}$.  Reasoning as the proof of (2.10) in \cite{he2023necessary}, due to $I_{3}$  is comparable to $\inf\limits_{y\in B_{\gamma} (x)} \mathcal{M}_{\alpha}^{p} \big( (b-b_{B_{\gamma+1}(x)})f_{2}\big) (y)$ (see \cite[P.1298]{kim2009carleson}), by using the generalized  H\"{o}lder's inequality \eqref{equ:generalized-holder-inequality-orlicz-bmo},  \cref{def.p-adic-max-frac-orlicz} and \cref{rem.def-p-adic-max-frac-orlicz} \labelcref{enumeratedef-p-adic-max-frac-orlicz-2},   we have
%-----------
\begin{align*}
%-------
 I_{3}&\le    \mathcal{M}_{\alpha}^{p} \big( (b-b_{B_{\gamma+1}(x)})f\big) (x)    \\
% &=  \sup_{\gamma\in \mathbb{Z}  \atop x\in \mathbb{Q}_{p}^{n}} \dfrac{1}{|B_{\gamma} (x)|_{h}^{1-\alpha/n}}  \dint_{B_{\gamma} (x)} |b(y)-b_{B_{\gamma+1}(x)}| |f(y)| \mathd y  \\
 &\le C\sup_{\gamma\in \mathbb{Z}  \atop x\in \mathbb{Q}_{p}^{n}} |B_{\gamma} (x)|_{h}^{ \alpha/n}  \|b\|_{\bmo(\mathbb{Q}_{p}^n)} \|f\|_{L(\log L),B_{\gamma}(x)} \\
%  &\le  C \|b\|_{\bmo(\mathbb{Q}_{p}^n)}  \mathcal{M}_{\alpha, L(\log L)}^{p}(f)(x) \\
   &\le  C \|b\|_{\bmo(\mathbb{Q}_{p}^n)}  \mathcal{M}_{\alpha}^{p}(\mathcal{M}^{p}).
%------------
\end{align*}
%---------

Combining the above estimates $I_{1}$, $I_{2}$ and $I_{3}$ together with \eqref{inequ:relation-positive-p-adic-frac-max} yields the desired result.
\end{proof}
%-----------

Finally, we also need the following results (see \cite[Lemma 2.11]{wu2023characterizationC}).
% Similar to the proof of Lemma 2.3 in \cite{zhang2009commutators}, and referring  to  the course of the proof of Theorem 1.4 in \cite{he2023necessary},    using \cref{lem:lem-3.1-kim2009q}  and the properties of $p$-adic ball,  through elementary calculations and derivations, the following assertions can be obtained. % (the ideas and methods were shown in the course of the proof of Theorem 1.4 of \cite{he2023necessary}).
%Hence,  we omit the proofs.    %,   see also \cite{zhang2009commutators}
%--------------------------------
\begin{lemma} \label{lem:frac-max-pointwise-property} %\color{red}
%--------------------------
Let  $b$ be a locally integrable function and $\mathbb{Q}_{p}^{n}$ be an $n$-dimensional $p$-adic vector space. For any fixed  $p$-adic ball $B_{\gamma} (x)\subset \mathbb{Q}_{p}^{n}$.
%-------
 \begin{enumerate}[label=(\roman*)] %\arabicfullwidth,
%------------
\item  If  $0 \le \alpha<n$,     then for all $y\in B_{\gamma}(x)$, we have
%----------
\begin{align*}
%------------------
   \mathcal{M}_{\alpha}^{p}(b\dchi_{B_{\gamma}(x)})(y)  &=  \mathcal{M}_{\alpha,B_{\gamma}(x)}^{p}(b)(y)
%------------
\\ \intertext{and}
%------------
    \mathcal{M}_{\alpha}^{p}(\dchi_{B_{\gamma}(x)})(y)  &=  \mathcal{M}_{\alpha,B_{\gamma}(x)}^{p}(\dchi_{B_{\gamma}(x)})(y)=|B_{\gamma}(x)|_{h}^{\alpha/n}.
%------------
\end{align*}
%----------
    \vspace{-1em}\label{enumerate:lem-frac-max-pointwise-property-1}
%-------
\item  Then for any  $y\in B_{\gamma} (x)$, we have
%---------------
\begin{align*}
%-------
    |b_{B_{\gamma}(x)}|   &\le  |B_{\gamma}(x)|_{h}^{-\alpha/n}\mathcal{M}_{\alpha,B_{\gamma}(x)}^{p}(b)(y).
%----------
\end{align*}
%-------
 \vspace{-1em}\label{enumerate:lem-frac-max-pointwise-property-2}
%-------
\item  Let $E=\{y\in B_{\gamma}(x): b(y)\le b_{B_{\gamma}(x)}\}$ and $F=  B_{\gamma}(x)\setminus E =\{y\in B_{\gamma}(x): b(y)> b_{B_{\gamma}(x)}\}$. Then the following equality is trivially true
%----------
\begin{align*}
%-------
    \dint_{E} |b(y)-b_{B_{\gamma}(x)}| \mathd y  &=  \dint_{F} |b(y)-b_{B_{\gamma}(x)}| \mathd y.
%----------
\end{align*}
%-------
\label{enumerate:lem-frac-max-pointwise-property-3}
%-------
%----------
\end{enumerate}
%------------
\end{lemma}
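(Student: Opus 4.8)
The plan is to reduce every assertion to the nesting structure of $p$-adic balls recorded in \cref{lem:lem-3.1-kim2009q}. Write $B_{*}:=B_{\gamma}(x)$ and fix $y\in B_{*}$. Because $y\in B_{\gamma'}(z)$ forces $B_{\gamma'}(z)=B_{\gamma'}(y)$, every ball containing $y$ is of the form $B_{\gamma'}(y)$, so the supremum defining $\mathcal{M}_{\alpha}^{p}$ at $y$ runs over the single totally ordered chain $\{B_{\gamma'}(y)\}_{\gamma'\in\mathbb{Z}}$; moreover any such ball is automatically nested with $B_{*}$, since both contain $y$ and hence cannot be disjoint. This one observation does essentially all the work.

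For part (i), I would expand $\mathcal{M}_{\alpha}^{p}(b\dchi_{B_{*}})(y)$ as the supremum over $\gamma'\in\mathbb{Z}$ of $|B_{\gamma'}(y)|_{h}^{-(1-\alpha/n)}\int_{B_{\gamma'}(y)\cap B_{*}}|b|$, and split according to the nesting alternative. If $B_{\gamma'}(y)\subset B_{*}$, the ball is admissible for $\mathcal{M}_{\alpha,B_{*}}^{p}$ and the corresponding terms coincide verbatim. If instead $B_{*}\subsetneq B_{\gamma'}(y)$, then $B_{\gamma'}(y)\cap B_{*}=B_{*}$, and since $1-\alpha/n>0$ makes $t\mapsto t^{-(1-\alpha/n)}$ strictly decreasing, the term is bounded above by $|B_{*}|_{h}^{-(1-\alpha/n)}\int_{B_{*}}|b|$, which is the admissible term attached to $B_{*}=B_{\gamma}(y)$ itself. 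Taking suprema yields the first identity. The second identity is the case $b\equiv1$: each admissible term collapses to $|B_{\gamma'}(y)|_{h}^{\alpha/n}$, and as $\alpha\ge0$ the map $t\mapsto t^{\alpha/n}$ is non-decreasing, so the supremum over $B_{\gamma'}(y)\subset B_{*}$ is attained at $B_{\gamma'}(y)=B_{*}$, giving $|B_{*}|_{h}^{\alpha/n}$.

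Part (ii) I would obtain simply by testing the supremum defining $\mathcal{M}_{\alpha,B_{*}}^{p}(b)(y)$ against the single admissible ball $B_{*}$, which gives $\mathcal{M}_{\alpha,B_{*}}^{p}(b)(y)\ge|B_{*}|_{h}^{\alpha/n}\,|B_{*}|_{h}^{-1}\int_{B_{*}}|b|\ge|B_{*}|_{h}^{\alpha/n}\,|b_{B_{*}}|$, the last step being the triangle inequality for the average; rearranging yields the claim. Part (iii) is immediate from $\int_{B_{*}}(b-b_{B_{*}})=0$: splitting this integral over $E$ and $F$ and using that $b-b_{B_{*}}$ is non-positive on $E$ and positive on $F$ gives $\int_{F}(b-b_{B_{*}})=-\int_{E}(b-b_{B_{*}})$, which is exactly the asserted equality of the two absolute-value integrals.

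The only genuinely delicate point is the domination step in part (i): one must confirm that enlarging the averaging ball beyond $B_{*}$ never increases the fractional average, and this is precisely where the sign condition $\alpha<n$ (so that $1-\alpha/n>0$) combines with the $p$-adic nesting property. Everything else is bookkeeping, and the hypothesis $0\le\alpha<n$ enters only through these two monotonicity facts.
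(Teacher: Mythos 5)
Your proof is correct in all three parts. Note that the paper does not actually prove this lemma: it is stated with a pointer to \cite[Lemma 2.11]{wu2023characterizationC}, so there is no internal proof to compare against, and your write-up supplies the missing argument in a self-contained way. The route you take is the natural one and is sound: the dichotomy of \cref{lem:lem-3.1-kim2009q} (two $p$-adic balls meeting at $y$ must be nested, and any ball containing $y$ may be recentered at $y$) reduces the supremum defining $\mathcal{M}_{\alpha}^{p}(b\dchi_{B_{\gamma}(x)})(y)$ to the two cases $B_{\gamma'}(y)\subset B_{\gamma}(x)$ and $B_{\gamma}(x)\subsetneq B_{\gamma'}(y)$, and in the latter case the strict positivity of $1-\alpha/n$ (this is exactly where $\alpha<n$ enters) makes the fractional average over the larger ball dominated by the one over $B_{\gamma}(x)$ itself, which is an admissible competitor for $\mathcal{M}_{\alpha,B_{\gamma}(x)}^{p}$. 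Parts (ii) and (iii) are handled correctly as well: (ii) by testing the restricted supremum at the single ball $B_{\gamma}(x)$ together with $|b_{B_{\gamma}(x)}|\le |B_{\gamma}(x)|_{h}^{-1}\int_{B_{\gamma}(x)}|b(y)|\,\mathd y$, and (iii) by splitting the cancellation identity $\int_{B_{\gamma}(x)}\big(b(y)-b_{B_{\gamma}(x)}\big)\,\mathd y=0$ over $E$ and $F$ according to the sign of $b-b_{B_{\gamma}(x)}$.
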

%-------------

%------------------------
\section{Proofs of the main results} %Main results and their proofs} %  the principal results
\label{sec:result-proof}

Now we give the proofs of the \cref{thm:nonlinear-frac-max-var-bmo,thm:frac-max-var-bmo,thm:nonlinear-frac-max-bmo-morrey,thm:frac-max-bmo-morrey}.

\subsection{Proof of \cref{thm:nonlinear-frac-max-var-bmo}}

To prove \cref{thm:nonlinear-frac-max-var-bmo}, we first prove the following lemma.

%-----------------
\begin{lemma}  \label{lem:frac-max-bmo-norm}
%---------------
   Let $0 < \alpha<n$        and $b$ be a locally integrable function on $\mathbb{Q}_{p}^{n}$. If there exists a positive constant $C$ such that
%-----------
\begin{align} \label{inequ:lem-frac-max-bmo-norm}
%-----------
\sup_{\gamma\in \mathbb{Z} \atop x\in \mathbb{Q}_{p}^{n}}   \dfrac{\Big\| \big(b -|B_{\gamma} (x)|_{h}^{-\alpha/n}\mathcal{M}_{\alpha,B_{\gamma} (x)}^{p} (b) \big) \dchi_{B_{\gamma} (x)} \Big\|_{L^{q(\cdot)}(\mathbb{Q}_{p}^{n}) }}{\|\dchi_{B_{\gamma} (x)}\|_{L^{q(\cdot)}(\mathbb{Q}_{p}^{n}) }} \le C
%-----------------
\end{align}
%-----------
 for some  $ q(\cdot)\in   \mathscr{B}(\mathbb{Q}_{p}^{n}) $, then $b\in  \bmo(\mathbb{Q}_{p}^{n})$.
%----------
\end{lemma}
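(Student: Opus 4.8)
The plan is to estimate the mean oscillation of $b$ over an arbitrary $p$-adic ball $B_{\gamma}(x)$ directly by the quantity appearing in \eqref{inequ:lem-frac-max-bmo-norm}. Writing for brevity $N_{B}(y):=|B_{\gamma}(x)|_{h}^{-\alpha/n}\mathcal{M}_{\alpha,B_{\gamma}(x)}^{p}(b)(y)$, the key observation is that $N_{B}$ dominates the average $b_{B_{\gamma}(x)}$ pointwise on $B_{\gamma}(x)$, so the oscillation $|b-b_{B_{\gamma}(x)}|$ can be controlled by $|b-N_{B}|$ after a symmetrization. Thus the whole argument reduces to the pointwise-to-integral comparison
\[
\frac{1}{|B_{\gamma}(x)|_{h}}\dint_{B_{\gamma}(x)}\big|b(y)-b_{B_{\gamma}(x)}\big|\,\mathd y\le \frac{C}{|B_{\gamma}(x)|_{h}}\dint_{B_{\gamma}(x)}\big|b(y)-N_{B}(y)\big|\,\mathd y,
\]
after which the right-hand side is handled by duality.

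First I would record the pointwise bound $b_{B_{\gamma}(x)}\le|b_{B_{\gamma}(x)}|\le N_{B}(y)$ valid for every $y\in B_{\gamma}(x)$, which is exactly \cref{lem:frac-max-pointwise-property} \labelcref{enumerate:lem-frac-max-pointwise-property-2}. Next, setting $E=\{y\in B_{\gamma}(x):b(y)\le b_{B_{\gamma}(x)}\}$, the symmetrization identity \cref{lem:frac-max-pointwise-property} \labelcref{enumerate:lem-frac-max-pointwise-property-3} gives $\dint_{B_{\gamma}(x)}|b-b_{B_{\gamma}(x)}|\,\mathd y=2\dint_{E}\big(b_{B_{\gamma}(x)}-b(y)\big)\,\mathd y$. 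On $E$ the chain $b(y)\le b_{B_{\gamma}(x)}\le N_{B}(y)$ forces $0\le b_{B_{\gamma}(x)}-b(y)\le N_{B}(y)-b(y)=|b(y)-N_{B}(y)|$, the last equality being genuine because $N_{B}-b\ge0$ there. Integrating over $E$ and enlarging to $B_{\gamma}(x)$ yields
\[
\dint_{B_{\gamma}(x)}\big|b(y)-b_{B_{\gamma}(x)}\big|\,\mathd y\le 2\dint_{B_{\gamma}(x)}\big|b(y)-N_{B}(y)\big|\,\mathd y,
\]
which is the announced reduction.

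Finally I would apply the generalized H\"{o}lder inequality \cref{lem:holder-inequality-p-adic} \labelcref{enumerate:holder-p-adic-variable-1} to the product $(b-N_{B})\dchi_{B_{\gamma}(x)}\cdot\dchi_{B_{\gamma}(x)}$, obtaining
\[
\dint_{B_{\gamma}(x)}\big|b(y)-N_{B}(y)\big|\,\mathd y\le C\big\|(b-N_{B})\dchi_{B_{\gamma}(x)}\big\|_{L^{q(\cdot)}(\mathbb{Q}_{p}^{n})}\,\big\|\dchi_{B_{\gamma}(x)}\big\|_{L^{q'(\cdot)}(\mathbb{Q}_{p}^{n})}.
\]
Inserting a factor $\|\dchi_{B_{\gamma}(x)}\|_{L^{q(\cdot)}(\mathbb{Q}_{p}^{n})}$ in numerator and denominator, the first quotient is bounded by the hypothesis \eqref{inequ:lem-frac-max-bmo-norm}, while the remaining factor $|B_{\gamma}(x)|_{h}^{-1}\|\dchi_{B_{\gamma}(x)}\|_{L^{q(\cdot)}}\|\dchi_{B_{\gamma}(x)}\|_{L^{q'(\cdot)}}$ must be shown uniformly bounded; here one uses that $q(\cdot)\in\mathscr{B}(\mathbb{Q}_{p}^{n})\subset\mathscr{P}(\mathbb{Q}_{p}^{n})$ together with $q'(\cdot)\in\mathscr{B}(\mathbb{Q}_{p}^{n})$, granted by \cref{lem.thm-5.2-variable-max-bounded}. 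Taking the supremum over all balls then gives $\|b\|_{\bmo(\mathbb{Q}_{p}^{n})}<\infty$, i.e. $b\in\bmo(\mathbb{Q}_{p}^{n})$. The main obstacle is precisely this last uniform control of the characteristic-function product under the weaker assumption $q(\cdot)\in\mathscr{B}$, rather than $q(\cdot)\in\mathscr{C}^{\log}\cap\mathscr{P}$ for which it is \cref{lem:norm-characteristic-p-adic} \labelcref{enumerate:charact-norm-conjugate-p-adic-variable}; one must verify that the relation $\|\dchi_{B_{\gamma}(x)}\|_{L^{q(\cdot)}}\|\dchi_{B_{\gamma}(x)}\|_{L^{q'(\cdot)}}\approx|B_{\gamma}(x)|_{h}$ persists throughout the class $\mathscr{B}(\mathbb{Q}_{p}^{n})$, the remaining steps being only the sign bookkeeping on $E$.
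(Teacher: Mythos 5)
Your proof follows the paper's own argument step for step: the pointwise domination $b_{B_{\gamma}(x)}\le |b_{B_{\gamma}(x)}|\le N_{B}(y)$ from \cref{lem:frac-max-pointwise-property} \labelcref{enumerate:lem-frac-max-pointwise-property-2}, the symmetrization over the set $E$ via \labelcref{enumerate:lem-frac-max-pointwise-property-3}, the resulting reduction $\int_{B_{\gamma}(x)}|b-b_{B_{\gamma}(x)}|\,\mathd y\le 2\int_{B_{\gamma}(x)}|b-N_{B}|\,\mathd y$, and the same H\"older-plus-hypothesis finish. The only divergence is at the very end: where you flag the uniform bound $|B_{\gamma}(x)|_{h}^{-1}\|\dchi_{B_{\gamma}(x)}\|_{L^{q(\cdot)}(\mathbb{Q}_{p}^{n})}\|\dchi_{B_{\gamma}(x)}\|_{L^{q'(\cdot)}(\mathbb{Q}_{p}^{n})}\le C$ as an unresolved obstacle for general $q(\cdot)\in\mathscr{B}(\mathbb{Q}_{p}^{n})$, the paper simply cites \cref{lem:norm-characteristic-p-adic} \labelcref{enumerate:charact-norm-conjugate-p-adic-variable}, whose stated hypothesis is $q(\cdot)\in\mathscr{C}^{\log}(\mathbb{Q}_{p}^{n})\cap\mathscr{P}(\mathbb{Q}_{p}^{n})$ rather than $q(\cdot)\in\mathscr{B}(\mathbb{Q}_{p}^{n})$, so the paper's proof has exactly the soft spot you identify. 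The point is genuinely closable for every $q(\cdot)\in\mathscr{B}(\mathbb{Q}_{p}^{n})$ by duality: since $B_{\gamma}(x)=B_{\gamma}(y)$ for every $y\in B_{\gamma}(x)$ (\cref{lem:lem-3.1-kim2009q}), one has the pointwise bound $\big(|B_{\gamma}(x)|_{h}^{-1}\int_{B_{\gamma}(x)}|f(z)|\,\mathd z\big)\dchi_{B_{\gamma}(x)}\le\mathcal{M}^{p}(f)$, and taking $L^{q(\cdot)}$ norms, then the supremum over $\|f\|_{L^{q(\cdot)}(\mathbb{Q}_{p}^{n})}\le 1$, together with the norm-conjugate formula $\sup_{\|f\|_{L^{q(\cdot)}}\le1}\int_{B_{\gamma}(x)}|f(z)|\,\mathd z\approx\|\dchi_{B_{\gamma}(x)}\|_{L^{q'(\cdot)}(\mathbb{Q}_{p}^{n})}$, converts the boundedness of $\mathcal{M}^{p}$ on $L^{q(\cdot)}(\mathbb{Q}_{p}^{n})$ into precisely the desired product bound. (Note also that in the lemma's only application, inside the proof of \cref{thm:nonlinear-frac-max-var-bmo}, the exponent $q(\cdot)$ lies in $\mathscr{C}^{\log}(\mathbb{Q}_{p}^{n})\cap\mathscr{P}(\mathbb{Q}_{p}^{n})$, so there the paper's citation applies verbatim.)
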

%----------

%-----------------
\begin{proof}
 Some ideas are taken from \cite{bastero2000commutators,zhang2009commutators} and  \cite{zhang2014commutators}. Reasoning as the proof of (4.4) in \cite{zhang2014commutators},  for any fixed  $p$-adic ball $B_{\gamma} (x)\subset \mathbb{Q}_{p}^{n}$, we have (see   \cref{lem:frac-max-pointwise-property} \labelcref{enumerate:lem-frac-max-pointwise-property-2})
%---------------
\begin{align*}
%-------
    |b_{B_{\gamma}(x)}|   &\le  |B_{\gamma}(x)|_{h}^{-\alpha/n}\mathcal{M}_{\alpha,B_{\gamma}(x)}^{p}(b)(y), \qquad  \forall~ y\in B_{\gamma} (x).
%----------
\end{align*}
%-------
Let $E=\{y\in B_{\gamma}(x): b(y)\le b_{B_{\gamma}(x)}\}$  and $F=  B_{\gamma}(x)\setminus E =\{y\in B_{\gamma}(x): b(y)> b_{B_{\gamma}(x)}\}$, then for any $ y\in E\subset B_{\gamma} (x)$, we have $b(y)\le b_{B_{\gamma}(x)}\le  |b_{B_{\gamma}(x)}| \le  |B_{\gamma}(x)|_{h}^{-\alpha/n}\mathcal{M}_{\alpha,B_{\gamma}(x)}^{p}(b)(y)$.
It is clear that
%---------------
\begin{align*}
%-------
    |b(y)-b_{B_{\gamma}(x)}|   &\le \Big| b(y) - |B_{\gamma}(x)|_{h}^{-\alpha/n}\mathcal{M}_{\alpha,B_{\gamma}(x)}^{p}(b)(y)\Big|, \qquad  \forall~ y\in E.
%----------
\end{align*}
%-------

Therefore, by using    \cref{lem:frac-max-pointwise-property} \labelcref{enumerate:lem-frac-max-pointwise-property-3},   we get
%-----------------
\begin{align*}
%-------
 \dfrac{1}{|B_{\gamma} (x)|_{h} } \dint_{B_{\gamma} (x)} \big| b(y)-b_{B_{\gamma} (x)}) \big| \mathd y &=  \dfrac{1}{|B_{\gamma} (x)|_{h} } \dint_{E\cup F}  \big| b(y)-b_{B_{\gamma} (x)}) \big|  \mathd y  \\
 &=  \dfrac{2}{|B_{\gamma} (x)|_{h} } \dint_{E}  \big| b(y)-b_{B_{\gamma} (x)}) \big|  \mathd y  \\
  &\le  \dfrac{2}{|B_{\gamma} (x)|_{h} } \dint_{E}  \Big| b(y)- |B_{\gamma}(x)|_{h}^{-\alpha/n}\mathcal{M}_{\alpha,B_{\gamma}(x)}^{p}(b)(y) \Big|  \mathd y  \\
  &\le  \dfrac{2}{|B_{\gamma} (x)|_{h} } \dint_{B_{\gamma} (x)}  \Big| b(y)- |B_{\gamma}(x)|_{h}^{-\alpha/n}\mathcal{M}_{\alpha,B_{\gamma}(x)}^{p}(b)(y) \Big|  \mathd y.
%----------
\end{align*}
%------------
 Using   generalized H\"{o}lder's inequality  (see \cref{lem:holder-inequality-p-adic} \labelcref{enumerate:holder-p-adic-variable-1}),   \labelcref{inequ:lem-frac-max-bmo-norm} and   \cref{lem:norm-characteristic-p-adic} \labelcref{enumerate:charact-norm-conjugate-p-adic-variable}, we obtain
%-----------------
\begin{align*}
%-------
 \dfrac{1}{|B_{\gamma} (x)|_{h}} \dint_{B_{\gamma} (x)} \big| b(y)-b_{B_{\gamma} (x)}) \big| \mathd y
%  &\le  \dfrac{2}{|B_{\gamma} (x)|_{h}} \dint_{B_{\gamma} (x)}  \Big| b(y)- |B_{\gamma}(x)|_{h}^{-\alpha/n}\mathcal{M}_{\alpha,B_{\gamma}(x)}^{p}(b)(y) \Big|  \mathd y  \\
 &\le  \dfrac{C}{|B_{\gamma} (x)|_{h}} \Big\| \big(b -|B_{\gamma} (x)|_{h}^{-\alpha/n}\mathcal{M}_{\alpha,B_{\gamma} (x)}^{p} (b) \big) \dchi_{B_{\gamma} (x)} \Big\|_{L^{q(\cdot)}(\mathbb{Q}_{p}^{n}) }   \\
 &\qquad \times \|\dchi_{B_{\gamma} (x)}\|_{L^{q'(\cdot)}(\mathbb{Q}_{p}^{n}) } \\
   &\le \dfrac{C}{|B_{\gamma} (x)|_{h}}  \|  \dchi_{B_{\gamma} (x)}  \|_{L^{q(\cdot)}(\mathbb{Q}_{p}^{n}) }  \|\dchi_{B_{\gamma} (x)}\|_{L^{q'(\cdot)}(\mathbb{Q}_{p}^{n}) }  \\
&\le C.
%----------
\end{align*}
%------------
So, the proof is completed by applying \cref{def.5.3-bmo-space-kim2009carleson}.   %{lem:cor-5.17-kim2009carleson}

\end{proof}
%------------

%-----------------
\begin{figure}[!ht] \centering
%-------
 \begin{tikzpicture}[vertex/.style = {shape=circle,draw,minimum size=1em},]
  % 定义单位圆半径(长度可任意)
  \edef\r{1.2cm}
 % 定义基础点(尽可能少)
  \tkzDefPoints{0/-2/M, 0/2/N}
  % 计算其它点
  % 定义垂直平分线并求垂足
  \tkzDefLine[mediator](M,N) \tkzGetPoints{x}{x'}
  \tkzInterLL(M,N)(x,x') \tkzGetPoint{O}
  % 确定正五边形连长
  \tkzInterLC[R](x',x)(O,\r) \tkzGetPoints{b}{a}
  % 定义正五边形，求得五个顶点
  \tkzDefRegPolygon[side,sides=5,name=P](a,b)

% 绘制点
  \tkzSetUpPoint[size = 4,fill = black!50, color = blue]
  \tkzDrawPoints(P1,P2,P3,P4,P5)

% 标注各点名称
  \tkzLabelPoint[above left,vertex](P4){$4$}
  \tkzLabelPoint[above right,vertex](P3){$3$}
  \tkzLabelPoint[below right,vertex](P2) {$2$}
  \tkzLabelPoint[below left,vertex](P1) {$1$}
    \tkzLabelPoint[above left,vertex](P5){$5$}

%  % 绘制
%  % 绘制基础正五边形和需要线段
%  \tkzDrawPolygon[dashed,fill=yellow!80,draw=blue,fill opacity=0.5](P1,P2,P3,P4,P5)
%  \tkzDrawPolygon[dashed,fill=white!80,draw=blue,fill opacity=0.5](P1,P2,P3,P4)
\tkzDrawSegment[-latex,very thick, blue!80!](P1,P2)  % ultra
\tkzDrawSegment[-latex,very thick, blue!80!](P3,P4)
\tkzDrawSegment[-latex,very thick, blue!80!](P4,P1)
%\tkzDrawSegment[-latex,very thick, blue!80!](P2,P5)
%   \tkzDrawSegments[-latex,dotted,very thick, black!80](P2,P3   P5,P4)
     \tkzDrawSegments[-latex,dotted,very thick, black!80](P2,P3 P2,P5 P5,P4)

%  % 标注线段长度
  \tkzLabelSegment[above,sloped,midway,font=\tiny,blue](P3,P4){$w_{34}$}
  \tkzLabelSegment[below,sloped,midway,font=\tiny,blue](P1,P2){$w_{12}$}
    \tkzLabelSegment[above right,sloped,midway,rotate=-70,shift={(4pt,10pt)},font=\tiny,blue](P1,P4){$w_{41}$}
  \tkzLabelSegment[below right,sloped,midway,rotate=-75,font=\tiny](P2,P3){$w_{23}$}
  \tkzLabelSegment[above,sloped,midway,shift={(20pt,0pt)},font=\tiny](P2,P5){$w_{25}$}
%  \tkzLabelSegment[above,sloped,midway,shift={(20pt,0pt)},font=\tiny,blue](P2,P5){$w_{25}$}
   \tkzLabelSegment[above,sloped,midway, font=\tiny](P5,P4){$w_{54}$}
\end{tikzpicture}
%------------
\vskip 6pt
\caption{Proof structure of \cref{thm:nonlinear-frac-max-var-bmo}  \\ where $w_{ij}$ denotes $i\Longrightarrow j$}\label{fig:proof-structure-frac-max-lip-1}
\end{figure}
%----------

%--------------------
\begin{proof}[Proof of  \cref{thm:nonlinear-frac-max-var-bmo}]
%---------------
Since the implications \labelcref{enumerate:thm-nonlinear-frac-max-var-bmo-2} $\xLongrightarrow{\ \  }$ \labelcref{enumerate:thm-nonlinear-frac-max-var-bmo-3} and \labelcref{enumerate:thm-nonlinear-frac-max-var-bmo-5} $\xLongrightarrow{\ \ }$ \labelcref{enumerate:thm-nonlinear-frac-max-var-bmo-4} follow readily,
and \labelcref{enumerate:thm-nonlinear-frac-max-var-bmo-2} $\xLongrightarrow{\ \  }$ \labelcref{enumerate:thm-nonlinear-frac-max-var-bmo-5} is similar to \labelcref{enumerate:thm-nonlinear-frac-max-var-bmo-3} $\xLongrightarrow{\ \  }$ \labelcref{enumerate:thm-nonlinear-frac-max-var-bmo-4},
we only need to prove \labelcref{enumerate:thm-nonlinear-frac-max-var-bmo-1} $\xLongrightarrow{\ \  }$ \labelcref{enumerate:thm-nonlinear-frac-max-var-bmo-2}, \labelcref{enumerate:thm-nonlinear-frac-max-var-bmo-3} $\xLongrightarrow{\ \  }$ \labelcref{enumerate:thm-nonlinear-frac-max-var-bmo-4} and  \labelcref{enumerate:thm-nonlinear-frac-max-var-bmo-4} $\xLongrightarrow{\ \  }$  \labelcref{enumerate:thm-nonlinear-frac-max-var-bmo-1}
%and \labelcref{enumerate:thm-nonlinear-frac-max-var-bmo-2} $\xLongrightarrow{\ \  }$ \labelcref{enumerate:thm-nonlinear-frac-max-var-bmo-5}
(see \Cref{fig:proof-structure-frac-max-lip-1} for the proof structure).

 \labelcref{enumerate:thm-nonlinear-frac-max-var-bmo-1} $\xLongrightarrow{\ \  }$ \labelcref{enumerate:thm-nonlinear-frac-max-var-bmo-2}:
 Let $b\in  \bmo(\mathbb{Q}_{p}^{n})$ and $b^{-}\in L^{\infty}(\mathbb{Q}_{p}^{n})$. We need to prove that $ [b,\mathcal{M}_{\alpha}^{p}] $ is bounded from $L^{r(\cdot)}(\mathbb{Q}_{p}^{n})$ to $L^{q(\cdot)}(\mathbb{Q}_{p}^{n})$ for all $r(\cdot), q(\cdot)\in   \mathscr{C}^{\log}(\mathbb{Q}_{p}^{n}) $ with $r(\cdot)\in \mathscr{P}(\mathbb{Q}_{p}^{n})$, $ r_{+}<\frac{n}{\alpha}$ and $1/q(\cdot) = 1/r(\cdot) -\alpha/n$.
  For such $r(\cdot)$ and any $f\in L^{r(\cdot)}(\mathbb{Q}_{p}^{n})$, it
follows from \cref{lem:frac-max-almost-every} that $\mathcal{M}_{\alpha}^{p}(f)(x)<\infty $ for almost everywhere $x\in \mathbb{Q}_{p}^{n}$. By  \cref{lem:nonlinear-frac-max-pointwise-estimate}  \labelcref{enumerate:thm-nonlinear-frac-max-pointwise-estimate-2} and \cref{lem:frac-max-pointwise-estimate}, %\cref{def.p-adic-max-frac-orlicz} and \cref{rem.def-p-adic-max-frac-orlicz},
we obtain
%----------
\begin{align*}
%-------------
  \big| [b,\mathcal{M}_{\alpha}^{p}](f)(x)  \big| &\le \mathcal{M}_{\alpha,b}^{p} (f)(x)+2b^{-}(x)\mathcal{M}_{\alpha}^{p} (f)(x)  \\
  &\le C \|b\|_{p,*} \Big(\mathcal{M}^{p}\big(\mathcal{M}_{\alpha}^{p}(f) \big)(x) + \mathcal{M}_{\alpha}^{p}\big(\mathcal{M}^{p}(f) \big)(x)\Big)+ +2b^{-}(x)\mathcal{M}_{\alpha}^{p} (f)(x).
 % C\|b^{-}\|_{L^{\infty}(\mathbb{Q}_{p}^{n})}  \mathcal{M}_{\alpha}^{p}\big(\mathcal{M}^{p}(f) \big)(x)  %\\
%  &\le C \|b\|_{p,*}\mathcal{M}^{p}\big(\mathcal{M}_{\alpha}^{p}(f) \big)(x) +\Big(\|b\|_{p,*} + \|b^{-}\|_{L^{\infty}(\mathbb{Q}_{p}^{n})} \Big) \mathcal{M}_{\alpha}^{p}\big(\mathcal{M}^{p}(f) \big)(x).
%--------------------
\end{align*}
%----------

 Then, statement \labelcref{enumerate:thm-nonlinear-frac-max-var-bmo-2} follows from \cref{lem.thm-5.2-variable-max-bounded} and  \cref{lem:frac-max-p-adic-estimate} \labelcref{enumerate:thm-4-chacon2021fractional}.

\labelcref{enumerate:thm-nonlinear-frac-max-var-bmo-3} $\xLongrightarrow{\ \  }$ \labelcref{enumerate:thm-nonlinear-frac-max-var-bmo-4}:
For any fixed  $p$-adic ball $B_{\gamma} (x)\subset \mathbb{Q}_{p}^{n}$ and any $y\in B_{\gamma} (x)$, it follows from   \cref{lem:frac-max-pointwise-property} \labelcref{enumerate:lem-frac-max-pointwise-property-1} that
%----------
\begin{align*}
%------------------
   \mathcal{M}_{\alpha}^{p}(b\dchi_{B_{\gamma}(x)})(y)   =  \mathcal{M}_{\alpha,B_{\gamma}(x)}^{p}(b)(y)
%------------
 \ \text{and} \
%------------
    \mathcal{M}_{\alpha}^{p}(\dchi_{B_{\gamma}(x)})(y)   =  \mathcal{M}_{\alpha,B_{\gamma}(x)}^{p}(\dchi_{B_{\gamma}(x)})(y)=|B_{\gamma}(x)|_{h}^{\alpha/n}.
%------------
\end{align*}
%----------
Then, for  any $y\in B_{\gamma} (x)$, we have
%---------------
\begin{align*}
%-------
    b(y)-|B_{\gamma} (x)|_{h}^{-\alpha/n} \mathcal{M}_{\alpha,B_{\gamma} (x)}^{p} (b)(y)   &= |B_{\gamma} (x)|_{h}^{-\alpha/n} \big( b(y) |B_{\gamma} (x)|_{h}^{\alpha/n} -\mathcal{M}_{\alpha,B_{\gamma} (x)}^{p} (b)(y) \big)    \\
   &=    |B_{\gamma} (x)|_{h}^{-\alpha/n} \big( b(y)  \mathcal{M}_{\alpha}^{p}(\dchi_{B_{\gamma}(x)})(y) -\mathcal{M}_{\alpha}^{p}(b\dchi_{B_{\gamma}(x)})(y) \big)     \\
    &=    |B_{\gamma} (x)|_{h}^{-\alpha/n}  [b,\mathcal{M}_{\alpha}^{p}] (\dchi_{B_{\gamma}(x)})(y).
%----------
\end{align*}
%------------
 Thus, for any $y\in \mathbb{Q}_{p}^{n}$, we get
%---------------
\begin{align*}
%-------
   \big( b(y)-|B_{\gamma} (x)|_{h}^{-\alpha/n} \mathcal{M}_{\alpha,B_{\gamma} (x)}^{p} (b)(y) \big) \dchi_{B_{\gamma}(x)}(y) &=     |B_{\gamma} (x)|_{h}^{-\alpha/n}  [b,\mathcal{M}_{\alpha}^{p}] (\dchi_{B_{\gamma}(x)})(y) \dchi_{B_{\gamma}(x)}(y).
%----------
\end{align*}
%------------
 By using assertion  \labelcref{enumerate:thm-nonlinear-frac-max-var-bmo-3}  and   \cref{lem:norm-characteristic-p-adic} \labelcref{enumerate:charact-norm-fraction-p-adic-variable}, we have
%----------------
\begin{align*}
%-------
 \Big\| \big(b -|B_{\gamma} (x)|_{h}^{-\alpha/n}\mathcal{M}_{\alpha,B_{\gamma} (x)}^{p} (b) \big) \dchi_{B_{\gamma} (x)} \Big\|_{L^{q(\cdot)}(\mathbb{Q}_{p}^{n}) }
% &\le  |B_{\gamma} (x)|_{h}^{-\alpha/n} \big\| [b,\mathcal{M}_{\alpha}^{p}] (\dchi_{B_{\gamma}(x)}) \big\|_{L^{q(\cdot)}(\mathbb{Q}_{p}^{n}) }  \\
&\le C |B_{\gamma} (x)|_{h}^{-\alpha/n} \big\|  \dchi_{B_{\gamma}(x)} \big\|_{L^{r(\cdot)}(\mathbb{Q}_{p}^{n}) }  \\
% &\le C |B_{\gamma} (x)|_{h}^{-\alpha/n}   |B_{\gamma} (x)|_{h}^{\alpha/n}  \big\|  \dchi_{B_{\gamma}(x)} \big\|_{L^{q(\cdot)}(\mathbb{Q}_{p}^{n}) }   \\
 &\le C     \big\|  \dchi_{B_{\gamma}(x)} \big\|_{L^{q(\cdot)}(\mathbb{Q}_{p}^{n}) },
%----------
\end{align*}
%------------
which gives \labelcref{inequ:thm-nonlinear-frac-max-var-bmo-4} since   $B_{\gamma} (x)$ is arbitrary and $C$ is independent of $B_{\gamma} (x)$.

\labelcref{enumerate:thm-nonlinear-frac-max-var-bmo-4} $\xLongrightarrow{\ \  }$  \labelcref{enumerate:thm-nonlinear-frac-max-var-bmo-1}:
By  \cref{lem:thm1.3-max-bmo-he2023necessary}, it suffices to prove
%--------
\begin{align} \label{inequ:proof-lem-non-negative-max-lip-41}
%-----------
 \sup_{\gamma\in \mathbb{Z} \atop x\in \mathbb{Q}_{p}^{n}} \dfrac{1}{|B_{\gamma} (x)|_{h}} \dint_{B_{\gamma} (x)} \Big|b(y)- \mathcal{M}_{B_{\gamma} (x)}^{p} (b)(y) \Big|  \mathd y <\infty.
%-----------------
\end{align}
%--------

  For any fixed  $p$-adic ball $B_{\gamma} (x)\subset \mathbb{Q}_{p}^{n}$, we have
%---------------
\begin{align} \label{inequ:proof-lem-non-negative-max-lip-41-1}
%-------
\begin{split}
%-------
   \dfrac{1}{|B_{\gamma} (x)|_{h} } &\dint_{B_{\gamma} (x)} \Big|b(y)- \mathcal{M}_{B_{\gamma} (x)}^{p} (b)(y) \Big|  \mathd y    \\
    &\le  \dfrac{1}{|B_{\gamma} (x)|_{h} } \dint_{B_{\gamma} (x)} \Big|b(y)- |B_{\gamma} (x)|_{h}^{-\alpha/n}\mathcal{M}_{\alpha,B_{\gamma} (x)}^{p} (b) (y)  \Big|  \mathd y     \\
    &\qquad +  \dfrac{1}{|B_{\gamma} (x)|_{h} } \dint_{B_{\gamma} (x)} \Big|  |B_{\gamma} (x)|_{h}^{-\alpha/n}\mathcal{M}_{\alpha,B_{\gamma} (x)}^{p} (b) (y)-\mathcal{M}_{B_{\gamma} (x)}^{p} (b)(y) \Big|  \mathd y    \\
     &:=  I_{1}+I_{2}.
%-------
\end{split}
%----------
\end{align}
%-------
For $ I_{1}$, by applying statement \labelcref{enumerate:thm-nonlinear-frac-max-var-bmo-4},  generalized H\"{o}lder's inequality (\cref{lem:holder-inequality-p-adic} \labelcref{enumerate:holder-p-adic-variable-1}) and  \cref{lem:norm-characteristic-p-adic} \labelcref{enumerate:charact-norm-conjugate-p-adic-variable}, we get
%---------------
\begin{align*}
%-------
 I_{1}
 %&=   \dfrac{1}{|B_{\gamma} (x)|_{h} } \dint_{B_{\gamma} (x)} \Big|b(y)- |B_{\gamma} (x)|_{h}^{-\alpha/n}\mathcal{M}_{\alpha,B_{\gamma} (x)}^{p} (b) (y)  \Big|  \mathd y     \\
 &\le  \dfrac{C}{|B_{\gamma} (x)|_{h} } \Big\| \big(b -|B_{\gamma} (x)|_{h}^{-\alpha/n}\mathcal{M}_{\alpha,B_{\gamma} (x)}^{p} (b) \big) \dchi_{B_{\gamma} (x)} \Big\|_{L^{q(\cdot)}(\mathbb{Q}_{p}^{n}) }  \|\dchi_{B_{\gamma} (x)}\|_{L^{q'(\cdot)}(\mathbb{Q}_{p}^{n}) } \\
  &\le   C \dfrac{ \Big\| \big(b -|B_{\gamma} (x)|_{h}^{-\alpha/n}\mathcal{M}_{\alpha,B_{\gamma} (x)}^{p} (b) \big) \dchi_{B_{\gamma} (x)} \Big\|_{L^{q(\cdot)}(\mathbb{Q}_{p}^{n}) }}{\|\dchi_{B_{\gamma} (x)}\|_{L^{q(\cdot)}(\mathbb{Q}_{p}^{n}) }}   \\
&\le C,
%----------
\end{align*}
%-------
where the constant $C$ is independent of $B_{\gamma} (x)$.

 Now we consider $ I_{2}$. For all  $y\in B_{\gamma} (x)$,  it follows from \cref{lem:frac-max-pointwise-property}  that
 %----------
\begin{align*}
%------------------
   \mathcal{M}_{\alpha}^{p}(\dchi_{B_{\gamma}(x)})(y)   =  |B_{\gamma}(x)|_{h}^{\alpha/n} \ \text{and}\  \mathcal{M}_{\alpha}^{p}(b\dchi_{B_{\gamma}(x)})(y)   =  \mathcal{M}_{\alpha,B_{\gamma}(x)}^{p}(b)(y),
%------------
\\ \intertext{and}
%------------
    \mathcal{M}^{p}(\dchi_{B_{\gamma}(x)})(y)   = \dchi_{B_{\gamma}(x)}(y)   =  1 \ \text{and}\  \mathcal{M}^{p}(b\dchi_{B_{\gamma}(x)})(y)   =  \mathcal{M}_{B_{\gamma}(x)}^{p}(b)(y).
%------------
\end{align*}
%----------
Then, for any $y\in B_{\gamma} (x)$,  we get
%---------------
\begin{align} \label{inequ:proof-lem-non-negative-max-lip-41-2}
%-------
\begin{split}
%-------
  &\Big|  |B_{\gamma} (x)|_{h}^{-\alpha/n}\mathcal{M}_{\alpha,B_{\gamma} (x)}^{p} (b) (y)-\mathcal{M}_{B_{\gamma} (x)}^{p} (b)(y) \Big|     \\
    &\le  |B_{\gamma} (x)|_{h}^{-\alpha/n}  \Big|  \mathcal{M}_{\alpha,B_{\gamma} (x)}^{p} (b) (y)- |B_{\gamma} (x)|_{h}^{\alpha/n} |b(y)|  \Big|  + \Big|  |b(y)|-\mathcal{M}_{B_{\gamma} (x)}^{p} (b)(y) \Big|    \\
    &\le  |B_{\gamma} (x)|_{h}^{-\alpha/n}  \Big|   \mathcal{M}_{\alpha}^{p}(b\dchi_{B_{\gamma}(x)})(y)-  |b(y)| \mathcal{M}_{\alpha}^{p}(\dchi_{B_{\gamma}(x)})(y) \Big|  \\
    &\qquad + \Big|  |b(y)|\mathcal{M}^{p}(\dchi_{B_{\gamma}(x)})(y) -\mathcal{M}^{p}(b\dchi_{B_{\gamma}(x)})(y)  \Big|    \\
  &\le  |B_{\gamma} (x)|_{h}^{-\alpha/n}  \Big|[|b|, \mathcal{M}_{\alpha}^{p}](\dchi_{B_{\gamma}(x)})(y) \Big|  + \Big| [ |b|, \mathcal{M}^{p}] (\dchi_{B_{\gamma}(x)})(y)   \Big|.
%----------
\end{split}
%-------
\end{align}
%-------

Since  $ q(\cdot)\in   \mathscr{B}(\mathbb{Q}_{p}^{n}) $ follows at once from  \cref{lem.thm-5.2-variable-max-bounded} and statement \labelcref{enumerate:thm-nonlinear-frac-max-var-bmo-4}. Then statement \labelcref{enumerate:thm-nonlinear-frac-max-var-bmo-4} along with \cref{lem:frac-max-bmo-norm} gives  $b\in  \bmo(\mathbb{Q}_{p}^{n})$, which implies $|b|\in  \bmo(\mathbb{Q}_{p}^{n})$.
Thus, we can apply  \cref{lem:nonlinear-frac-max-pointwise-estimate} to $[|b|, \mathcal{M}_{\alpha}^{p}]$ and $[ |b|, \mathcal{M}^{p}]$ due to  $|b|\in  \bmo(\mathbb{Q}_{p}^{n})$.

By using  \cref{lem:nonlinear-frac-max-pointwise-estimate}, \cref{lem:frac-max-pointwise-estimate} and \cref{lem:frac-max-pointwise-property} \labelcref{enumerate:lem-frac-max-pointwise-property-1}, for any $y\in B_{\gamma} (x)$,  we have
%----------
\begin{align*}
%------------------
  \Big|[|b|, \mathcal{M}_{\alpha}^{p}](\dchi_{B_{\gamma}(x)})(y) \Big| &\le \mathcal{M}_{\alpha,|b|}^{p} (\dchi_{B_{\gamma}(x)})(y)
  \le C \|b\|_{p,*} \Big(\mathcal{M}^{p}\big(\mathcal{M}_{\alpha}^{p}(\dchi_{B_{\gamma}(x)}) \big)(y) + \mathcal{M}_{\alpha}^{p}\big(\mathcal{M}^{p}(\dchi_{B_{\gamma}(x)}) \big)(y)\Big)    \\
  &\le C \|b\|_{p,*}  |B_{\gamma} (x)|_{h}^{\alpha/n}
%------------
\\ \intertext{and}
%------------
 \Big| [ |b|, \mathcal{M}^{p}] (\dchi_{B_{\gamma}(x)})(y)   \Big| &\le \mathcal{M}_{|b|}^{p} (\dchi_{B_{\gamma}(x)})(y)
  \le C \|b\|_{p,*} \mathcal{M}^{p}\big(\mathcal{M}^{p}(\dchi_{B_{\gamma}(x)}) \big)(y)  =C \|b\|_{p,*}.
%------------
\end{align*}
%----------

Hence, it follows from  \labelcref{inequ:proof-lem-non-negative-max-lip-41-2} that
%---------------
\begin{align*}
%-------
 I_{2}
 %&=   \dfrac{1}{|B_{\gamma} (x)|_{h}} \dint_{B_{\gamma} (x)} \Big|  |B_{\gamma} (x)|_{h}^{-\alpha/n}\mathcal{M}_{\alpha,B_{\gamma} (x)}^{p} (b) (y)-\mathcal{M}_{B_{\gamma} (x)}^{p} (b)(y) \Big|  \mathd y    \\
 &\le \dfrac{C}{|B_{\gamma} (x)|_{h}^{1+ \alpha/n}} \dint_{B_{\gamma} (x)}  \Big|[|b|, \mathcal{M}_{\alpha}^{p}](\dchi_{B_{\gamma}(x)})(y) \Big| \mathd y   +\dfrac{C}{|B_{\gamma} (x)|_{h}} \dint_{B_{\gamma} (x)} \Big|  [ |b|, \mathcal{M}^{p}] (\dchi_{B_{\gamma}(x)})(y)   \Big|  \mathd y    \\
     &\le C \|b\|_{p,*}.
%----------
\end{align*}
%-------

Putting the above estimates for $I_{1}$ and $I_{2}$ into \labelcref{inequ:proof-lem-non-negative-max-lip-41-1}, we obtain \labelcref{inequ:proof-lem-non-negative-max-lip-41}.

%\labelcref{enumerate:thm-nonlinear-frac-max-var-bmo-2} $\xLongrightarrow{\ \  }$ \labelcref{enumerate:thm-nonlinear-frac-max-var-bmo-5}:
%Assume statement \labelcref{enumerate:thm-nonlinear-frac-max-var-bmo-2} is true. Reasoning as in the proof of \labelcref{enumerate:thm-nonlinear-frac-max-var-bmo-3} $\xLongrightarrow{\ \  }$ \labelcref{enumerate:thm-nonlinear-frac-max-var-bmo-4}, we have

 This completes the proof of \cref{thm:nonlinear-frac-max-var-bmo}.
%--------------------------------
\end{proof}
%-------------------

\subsection{Proof  of \cref{thm:frac-max-var-bmo} }

%--------------------
\begin{proof}[Proof of \cref{thm:frac-max-var-bmo}]
%---------------
Similar to prove \cref{thm:nonlinear-frac-max-var-bmo},
%---------------
%Since the implications \labelcref{enumerate:thm-frac-max-var-bmo-2} $\xLongrightarrow{\ \  }$ \labelcref{enumerate:thm-frac-max-var-bmo-3} and \labelcref{enumerate:thm-frac-max-var-bmo-5} $\xLongrightarrow{\ \ }$ \labelcref{enumerate:thm-frac-max-var-bmo-4} follow readily,
%and \labelcref{enumerate:thm-frac-max-var-bmo-2} $\xLongrightarrow{\ \  }$ \labelcref{enumerate:thm-frac-max-var-bmo-5} is similar to \labelcref{enumerate:thm-frac-max-var-bmo-3} $\xLongrightarrow{\ \  }$ \labelcref{enumerate:thm-frac-max-var-bmo-4},
 we only need to prove the implications \labelcref{enumerate:thm-frac-max-var-bmo-1} $\xLongrightarrow{\ \  }$ \labelcref{enumerate:thm-frac-max-var-bmo-2}, \labelcref{enumerate:thm-frac-max-var-bmo-3} $\xLongrightarrow{\ \  }$ \labelcref{enumerate:thm-frac-max-var-bmo-4}
and \labelcref{enumerate:thm-frac-max-var-bmo-4} $\xLongrightarrow{\ \  }$  \labelcref{enumerate:thm-frac-max-var-bmo-1} (the proof structure is also shown in \Cref{fig:proof-structure-frac-max-lip-1}).

 \labelcref{enumerate:thm-frac-max-var-bmo-1} $\xLongrightarrow{\ \  }$ \labelcref{enumerate:thm-frac-max-var-bmo-2}:
%------------
%For any $p$-adic ball $B_{\gamma}(x) \subset \mathbb{Q}_{p}^{n}$,
Since $b\in  \bmo(\mathbb{Q}_{p}^{n})$, $r(\cdot), q(\cdot)\in   \mathscr{C}^{\log}(\mathbb{Q}_{p}^{n}) $ with $r(\cdot)\in \mathscr{P}(\mathbb{Q}_{p}^{n})$, $ r_{+}<\frac{n}{\alpha }$ and $1/q(\cdot) = 1/r(\cdot) -\alpha/n$. Then,  using  \cref{lem:frac-max-pointwise-estimate},  we get
%----------------
\begin{align*}
%-------
 \mathcal{M}_{\alpha,b}^{p} (f)(x) &\le C \|b\|_{p,*} \Big(\mathcal{M}^{p}\big(\mathcal{M}_{\alpha}^{p}(f) \big)(x) + \mathcal{M}_{\alpha}^{p}\big(\mathcal{M}^{p}(f) \big)(x)\Big).
%------
\end{align*}
%------------
%Obviously, assertion \labelcref{enumerate:thm-frac-max-var-bmo-2} follows immediately from \labelcref{enumerate:thm-4-chacon2021fractional}  of \cref{lem:frac-max-p-adic-estimate} and \labelcref{inequ:proof-cor-12-1}.

This together with  \cref{lem:frac-max-p-adic-estimate} \labelcref{enumerate:thm-4-chacon2021fractional} and \labelcref{enumerate:thm-5.2-chacon2020variable} indicates that $ \mathcal{M}_{\alpha,b}^{p}$ is bounded from $L^{r(\cdot)}(\mathbb{Q}_{p}^{n})$ to $L^{q(\cdot)}(\mathbb{Q}_{p}^{n})$.

 \labelcref{enumerate:thm-frac-max-var-bmo-3} $\xLongrightarrow{\ \  }$ \labelcref{enumerate:thm-frac-max-var-bmo-4}: For any fixed  $p$-adic ball $B_{\gamma} (x)\subset \mathbb{Q}_{p}^{n}$. By using \cref{lem:lem-3.1-kim2009q}, for all $y\in B_{\gamma} (x)$, we have
%---------------
\begin{align} \label{inequ:pf-3-4-frac-max}
%-------
\begin{split}
%-------
  |b(y)-b_{B_{\gamma}(x)}|  &\le    \dfrac{1}{|B_{\gamma} (x)|_{h}} \dint_{B_{\gamma} (x)} \big|b(y)-b(z) \big| \mathd z    \\
   &=    \dfrac{1}{|B_{\gamma} (x)|_{h}} \dint_{B_{\gamma} (x)} \big|b(y)-b(z) \big| \dchi_{B_{\gamma} (x)}(z) \mathd z    \\
   &\le  \dfrac{1}{|B_{\gamma} (x)|_{h}^{\alpha/n}}   \mathcal{M}_{\alpha,b}^{p} (\dchi_{B_{\gamma} (x)})(y).
%----------
\end{split}
%-------
\end{align}
%------------
 Then, for all $y\in \mathbb{Q}_{p}^{n}$, we get
%---------------
\begin{align*}
%-------
  \big| \big(b(y)-b_{B_{\gamma}(x)} \big)\dchi_{B_{\gamma} (x)}(y) \big|  &\le  |B_{\gamma} (x)|_{h}^{-\alpha/n}    \mathcal{M}_{\alpha,b}^{p} (\dchi_{B_{\gamma} (x)})(y).
%----------
\end{align*}
%------------
Since  $ \mathcal{M}_{\alpha,b}^{p}$ is bounded from $L^{r(\cdot)}(\mathbb{Q}_{p}^{n})$ to $L^{q(\cdot)}(\mathbb{Q}_{p}^{n})$, using    \cref{lem:norm-characteristic-p-adic} \labelcref{enumerate:charact-norm-fraction-p-adic-variable}, we have
%---------------
\begin{align*}
%-------
  \big\| \big(b-b_{B_{\gamma}(x)} \big)\dchi_{B_{\gamma} (x)}  \big\|_{L^{q(\cdot)}(\mathbb{Q}_{p}^{n}) }   &\le  |B_{\gamma} (x)|_{h}^{-\alpha/n}    \big\| \mathcal{M}_{\alpha,b}^{p} (\dchi_{B_{\gamma} (x)}) \big\|_{L^{q(\cdot)}(\mathbb{Q}_{p}^{n}) }     \\
   &\le C |B_{\gamma} (x)|_{h}^{-\alpha/n}    \big\| \dchi_{B_{\gamma} (x)}  \big\|_{L^{r(\cdot)}(\mathbb{Q}_{p}^{n}) }     \\
   &\le C     \|\dchi_{B_{\gamma} (x)}  \|_{L^{q(\cdot)}(\mathbb{Q}_{p}^{n})},
%----------
\end{align*}
%------------
which implies \labelcref{inequ:thm-frac-max-var-bmo-4} since $B_{\gamma} (x)$ is arbitrary and   $C$ is independent of $B_{\gamma} (x)$.

\labelcref{enumerate:thm-frac-max-var-bmo-4} $\xLongrightarrow{\ \  }$  \labelcref{enumerate:thm-frac-max-var-bmo-1}:
  For any   $p$-adic ball $B_{\gamma} (x)\subset \mathbb{Q}_{p}^{n}$,  by using  generalized H\"{o}lder's inequality (see \cref{lem:holder-inequality-p-adic} \labelcref{enumerate:holder-p-adic-variable-1}),  assertion  \labelcref{enumerate:thm-frac-max-var-bmo-4} and    \cref{lem:norm-characteristic-p-adic} \labelcref{enumerate:charact-norm-fraction-p-adic-variable}, we obtain
%-----------------
\begin{align*}
%-------
 \dfrac{1}{|B_{\gamma} (x)|_{h}} \dint_{B_{\gamma} (x)} \big| b(y)-b_{B_{\gamma} (x)}) \big| \mathd y
%  &=   \dfrac{1}{|B_{\gamma} (x)|_{h}} \dint_{B_{\gamma} (x)} \big| b(y)-b_{B_{\gamma} (x)}) \big| \dchi_{B_{\gamma}(x)}(y) \mathd y \\
  &\le \dfrac{C}{|B_{\gamma} (x)|_{h}} \big\| \big(b-b_{B_{\gamma}(x)} \big)\dchi_{B_{\gamma} (x)}  \big\|_{L^{q(\cdot)}(\mathbb{Q}_{p}^{n}) }   \|\dchi_{B_{\gamma} (x)}  \|_{L^{q'(\cdot)}(\mathbb{Q}_{p}^{n})}   \\
%   &=  C \dfrac{\big\| \big(b-b_{B_{\gamma}(x)} \big)\dchi_{B_{\gamma} (x)}  \big\|_{L^{q(\cdot)}(\mathbb{Q}_{p}^{n}) }}{\|\dchi_{B_{\gamma} (x)}  \|_{L^{q(\cdot)}(\mathbb{Q}_{p}^{n})} }       \\
%  &\qquad \times    \dfrac{1}{|B_{\gamma} (x)|_{h}}  \|\dchi_{B_{\gamma} (x)}  \|_{L^{q(\cdot)}(\mathbb{Q}_{p}^{n})} \|\dchi_{B_{\gamma} (x)}  \|_{L^{q'(\cdot)}(\mathbb{Q}_{p}^{n})}    \\
&\le C.
%----------
\end{align*}
%------------

 This shows that $b\in  \bmo(\mathbb{Q}_{p}^{n})$ by   \cref{def.5.3-bmo-space-kim2009carleson} since the constant $C$ is independent of $B_{\gamma} (x)$.

 The proof of \cref{thm:frac-max-var-bmo} is finished.
%--------------------------------
\end{proof}
%--------------

\subsection{ Proofs of \cref{thm:nonlinear-frac-max-bmo-morrey} and \cref{thm:frac-max-bmo-morrey}}

%-------------
\begin{proof}[Proof of \cref{thm:nonlinear-frac-max-bmo-morrey}]
%-------------
%----------------
\begin{enumerate}[label=(T.\arabic*)]
 %%-------------
  \item  1)\ We first prove ``$\Longrightarrow$": Assume  $b\in  \bmo(\mathbb{Q}_{p}^{n})$  and $b^{-}\in L^{\infty}(\mathbb{Q}_{p}^{n})$.
  By \cref{lem:nonlinear-frac-max-pointwise-estimate}  \labelcref{enumerate:thm-nonlinear-frac-max-pointwise-estimate-2}  and  \cref{lem:frac-max-pointwise-estimate}, we have
%----------
\begin{align*}
%------------------
  \big| [b,\mathcal{M}_{\alpha}^{p}](f)(x)  \big| &\le \mathcal{M}_{\alpha,b}^{p} (f)(x)+2b^{-}(x)\mathcal{M}_{\alpha}^{p} (f)(x)    \\
  &\le C \|b\|_{p,*} \Big(\mathcal{M}^{p}\big(\mathcal{M}_{\alpha}^{p}(f) \big)(x) + \mathcal{M}_{\alpha}^{p}\big(\mathcal{M}^{p}(f) \big)(x)\Big) +2b^{-}(x)\mathcal{M}_{\alpha}^{p} (f)(x).
%------------
\end{align*}
%----------
This together with   \cref{lem:lem10-he2022characterization} and \cref{lem:max-function-bound-morrey}  gives that   $ [b,\mathcal{M}_{\alpha}^{p}] $ is bounded from  $L^{r,\lambda}(\mathbb{Q}_{p}^{n})$ to $L^{q,\lambda}(\mathbb{Q}_{p}^{n})$.

 2)\ Now, we prove  ``$\Longleftarrow$": Suppose $ [b,\mathcal{M}_{\alpha}^{p}] $ is bounded from  $L^{r,\lambda}(\mathbb{Q}_{p}^{n})$ to $L^{q,\lambda}(\mathbb{Q}_{p}^{n})$.  With the help of \cref{cor:nonlinear-frac-max-bmo}, we only need to prove that  \eqref{inequ:cor-nonlinear-frac-max-bmo-4} holds, i.e.,  there exists a positive constant $C$ such that
%--------
\begin{align*}
%-----------
  \Big( \dfrac{1}{|B_{\gamma} (x)|_{h}} & \dint_{B_{\gamma} (x)} \big|b(y)-|B_{\gamma} (x)|_{h}^{-\alpha/n} \mathcal{M}_{\alpha,B_{\gamma} (x)}^{p} (b)(y) \big|^{q} \mathd y \Big)^{1/q} \le C.
%-----------------
\end{align*}
%-----------
For any fixed  $p$-adic ball $B_{\gamma} (x)\subset \mathbb{Q}_{p}^{n}$ and any $y\in B_{\gamma} (x)$, it follows from   \cref{lem:frac-max-pointwise-property} \labelcref{enumerate:lem-frac-max-pointwise-property-1} that
%----------
\begin{align*}
%------------------
   \mathcal{M}_{\alpha}^{p}(b\dchi_{B_{\gamma}(x)})(y)   =  \mathcal{M}_{\alpha,B_{\gamma}(x)}^{p}(b)(y)
%------------
 \ \text{and} \
%------------
    \mathcal{M}_{\alpha}^{p}(\dchi_{B_{\gamma}(x)})(y)   =  \mathcal{M}_{\alpha,B_{\gamma}(x)}^{p}(\dchi_{B_{\gamma}(x)})(y)=|B_{\gamma}(x)|_{h}^{\alpha/n}.
%------------
\end{align*}
%----------
Then, using  \cref{def.2.3-morrey-p-adic-ma2020weighted}, we have
%--------
\begin{align*}
%-----------
 \Big( \dfrac{1}{|B_{\gamma} (x)|_{h}} & \dint_{B_{\gamma} (x)} \big|b(y)-|B_{\gamma} (x)|_{h}^{-\alpha/n} \mathcal{M}_{\alpha,B_{\gamma} (x)}^{p} (b)(y) \big|^{q} \mathd y \Big)^{1/q} \\
%   &= \Big( \dfrac{1}{|B_{\gamma} (x)|_{h}^{1+\alpha q/n}} \dint_{B_{\gamma} (x)} \big||B_{\gamma} (x)|_{h}^{\alpha/n} b(y)- \mathcal{M}_{\alpha,B_{\gamma} (x)}^{p} (b)(y) \big|^{q} \mathd y \Big)^{1/q}  \\
%   &= \Big( \dfrac{1}{|B_{\gamma} (x)|_{h}^{1+\alpha q/n}} \dint_{B_{\gamma} (x)} \big|  b(y)\mathcal{M}_{\alpha}^{p}(\dchi_{B_{\gamma}(x)})(y)-  \mathcal{M}_{\alpha}^{p}(b\dchi_{B_{\gamma}(x)})(y) \big|^{q} \mathd y \Big)^{1/q}  \\
   &= \Big( \dfrac{1}{|B_{\gamma} (x)|_{h}^{1+\alpha q/n}} \dint_{B_{\gamma} (x)} \big|  [b,\mathcal{M}_{\alpha}^{p}](\dchi_{B_{\gamma}(x)})(y)  \big|^{q} \mathd y \Big)^{1/q}  \\
%   &= |B_{\gamma} (x)|_{h}^{\frac{\lambda}{qn} -\frac{\alpha}{n}-\frac{1}{q}} \Big( \dfrac{1}{|B_{\gamma} (x)|_{h}^{ \lambda/n}}\dint_{B_{\gamma} (x)} \big|  [b,\mathcal{M}_{\alpha}^{p}](\dchi_{B_{\gamma}(x)})(y)  \big|^{q} \mathd y \Big)^{1/q}  \\
   &\le C|B_{\gamma} (x)|_{h}^{\frac{\lambda}{qn} -\frac{\alpha}{n}-\frac{1}{q}}  \big\|  [b,\mathcal{M}_{\alpha}^{p}](\dchi_{B_{\gamma}(x)})   \big\|_{L^{q,\lambda}(\mathbb{Q}_{p}^{n})}   \\
   &\le C |B_{\gamma} (x)|_{h}^{\frac{\lambda}{qn} -\frac{\alpha}{n}-\frac{1}{q}}   \big\|  \dchi_{B_{\gamma}(x)}    \big\|_{L^{r,\lambda}(\mathbb{Q}_{p}^{n})}   \\
   &\le C,
%-----------------
\end{align*}
%-----------
 where in the last step we have used  $1/q=1/r-\alpha/(n-\lambda)$  and the fact from \cref{def.2.3-morrey-p-adic-ma2020weighted}  and \cref{lem:norm-characteristic-p-adic}  \labelcref{enumerate:charact-norm-p-adic-he}
 %-----------------
\begin{align} \label{inequ:pf-nonlinear-frac-max-morrey}
%-------
  \|\dchi_{B}\|_{L^{r,\lambda}(\mathbb{Q}_{p}^{n})} &\le  \vert B \vert^{(1-\lambda/n)/r}.
%----------
\end{align}
%----------
This completes the proof.
%-----------
   \item  By using a similar reasoning as for \labelcref{enumerate:thm-nonlinear-frac-max-bmo-morrey-1} in \cref{thm:nonlinear-frac-max-bmo-morrey}, we can  deduce the desired result. Hence, we omit the details.                                \qedhere
%-----------
\end{enumerate}
%------------
%-------------
\end{proof}
%--------------

%-------------
\begin{proof}[Proof of \cref{thm:frac-max-bmo-morrey}]
%-------------
Reasoning as the proof of  \cref{thm:nonlinear-frac-max-bmo-morrey}.
%----------------
\begin{enumerate}[label=(T.\arabic*)]
 %%-------------
  \item  1)\  For ``$\Longrightarrow$": Since $b\in  \bmo(\mathbb{Q}_{p}^{n})$, then, using  \cref{lem:frac-max-pointwise-estimate}, and combining \cref{lem:lem10-he2022characterization} and \cref{lem:max-function-bound-morrey},   we can obtain the result we need.
%%----------
%\begin{align*}
%%------------------
%  \mathcal{M}_{\alpha,b}^{p} (f)(x)  &\le C \|b\|_{p,*} \Big(\mathcal{M}^{p}\big(\mathcal{M}_{\alpha}^{p}(f) \big)(x) + \mathcal{M}_{\alpha}^{p}\big(\mathcal{M}^{p}(f) \big)(x)\Big)
%%------------
%\end{align*}
%%----------
%This together with   \cref{lem:lem10-he2022characterization} and \cref{lem:max-function-bound-morrey}  gives that   $\mathcal{M}_{\alpha,b}^{p}$ is bounded from  $L^{r,\lambda}(\mathbb{Q}_{p}^{n})$ to $L^{q,\lambda}(\mathbb{Q}_{p}^{n})$.

 2)\ For  ``$\Longleftarrow$": Let $\mathcal{M}_{\alpha,b}^{p}  $ is bounded from  $L^{r,\lambda}(\mathbb{Q}_{p}^{n})$ to $L^{q,\lambda}(\mathbb{Q}_{p}^{n})$.  According to \cref{cor:frac-max-bmo}, we  just  need to  verify that  \eqref{inequ:cor-frac-max-bmo-4} is  valid, i.e.,  there exists a positive constant $C$ such that
%--------
\begin{align*}
%-----------
  \Big( \dfrac{1}{|B_{\gamma} (x)|_{h}} & \dint_{B_{\gamma} (x)} \big|b(y)-b_{B_{\gamma}(x)} \big|^{q} \mathd y \Big)^{1/q} \le C.
%-----------------
\end{align*}
%-----------
For any   $p$-adic ball $B_{\gamma} (x)\subset \mathbb{Q}_{p}^{n}$ and any $y\in B_{\gamma} (x)$,  using \eqref{inequ:pf-3-4-frac-max},  \cref{def.2.3-morrey-p-adic-ma2020weighted} and \eqref{inequ:pf-nonlinear-frac-max-morrey}, we get
%--------
\begin{align*}
%-----------
 \Big( \dfrac{1}{|B_{\gamma} (x)|_{h}}   \dint_{B_{\gamma} (x)} \big|b(y)-b_{B_{\gamma}(x)} \big|^{q} \mathd y \Big)^{1/q}
 &\le \Big( \dfrac{1}{|B_{\gamma} (x)|_{h}^{1+\alpha q/n}} \dint_{B_{\gamma} (x)} \Big( \mathcal{M}_{\alpha,b}^{p} (\dchi_{B_{\gamma} (x)})(y) \Big)^{q} \mathd y \Big)^{1/q}  \\
%   &= |B_{\gamma} (x)|_{h}^{\frac{\lambda}{qn} -\frac{\alpha}{n}-\frac{1}{q}} \Big( \dfrac{1}{|B_{\gamma} (x)|_{h}^{ \lambda/n}}\dint_{B_{\gamma} (x)} \big|  \mathcal{M}_{\alpha,b}^{p} (\dchi_{B_{\gamma} (x)})(y)  \big|^{q} \mathd y \Big)^{1/q}  \\
   &\le C|B_{\gamma} (x)|_{h}^{\frac{\lambda}{qn} -\frac{\alpha}{n}-\frac{1}{q}}  \big\| \mathcal{M}_{\alpha,b}^{p}(\dchi_{B_{\gamma}(x)})   \big\|_{L^{q,\lambda}(\mathbb{Q}_{p}^{n})}   \\
   &\le C |B_{\gamma} (x)|_{h}^{\frac{\lambda}{qn} -\frac{\alpha}{n}-\frac{1}{q}}   \big\|  \dchi_{B_{\gamma}(x)}    \big\|_{L^{r,\lambda}(\mathbb{Q}_{p}^{n})}   \\
   &\le C.
%-----------------
\end{align*}
%-----------
%-----------
   \item  By applying a similar  argument to   \labelcref{enumerate:thm-frac-max-bmo-morrey-1} in \cref{thm:frac-max-bmo-morrey}, we can  achieve the desired
result.                                \qedhere
%-----------
\end{enumerate}
%------------
%-------------
\end{proof}
%--------------

%-----------------------
% \subsubsection*{Acknowledgments:}
%The authors cordially  thank the anonymous referees who gave valuable  suggestions and useful comments which have lead to the improvement of this paper.

%-----------------------
 \subsubsection*{Funding information:}
 This work was partly supported by Project of Heilongjiang Province Science and Technology Program (No.2019-KYYWF-0909), the National Natural Science Foundation of China (No.11571160),the Reform and Development Foundation for Local Colleges and Universities of the Central Government(No.2020YQ07) and the Scientific Research Fund of Mudanjiang Normal University (No.D211220637).

\phantomsection
\addcontentsline{toc}{section}{References}
%----------
%\bibliographystyle{tugboat}  %{amsplain} %
%\bibliography{wu-reference}

%============================
\end{document}